\newcommand{\C}{\mathbf{C}}
\newcommand{\R}{\mathbf{R}}
\newcommand{\Q}{\mathbf{Q}}
\newcommand{\N}{\mathbf{N}}
\newcommand{\tto}{\Rightarrow}
\newcommand{\eps}{\epsilon}
\DeclareMathOperator\p{\mathbf{P}}
\DeclareMathOperator\E{\mathbf{E}}
\DeclareMathOperator\I{\mathbf{1}}
\DeclareMathOperator\Var{\mathbf{Var}}
\DeclareMathOperator\rank{rank}
\DeclareMathOperator\tr{tr}
\newcommand{\semicolon}{\mathrel{;}}
\theoremstyle{plain}
\newtheorem{theorem}{Theorem}[section]
\newtheorem{proposition}[theorem]{Proposition}
\newtheorem{lemma}[theorem]{Lemma}
\newtheorem{corollary}[theorem]{Corollary}
\theoremstyle{definition}
\newtheorem{definition}[theorem]{Definition}
\theoremstyle{remark}
\newtheorem{remark}[theorem]{Remark}
\numberwithin{equation}{section}
\newcommand{\esdn}{\mu_{W_n}}
\newcommand{\scdist}{\mu_{\mathrm{sc}}}
\newcommand{\wn}{w_{ij}^{(n)}}
\newcommand{\Varwn}{\Var\bigl[\wn\bigr]}
\newcommand{\lindn}{\E\bigl[\,|\wn|^2\semicolon|\wn|>\eps\,\bigr]}
\newcommand{\on}[1]{\lim_{n\to\infty}\frac{1}{n}#1=0}
\newcommand{\bi}{\mathbf{i}}
\newcommand{\bj}{\mathbf{j}}
\newcommand{\bc}{\mathbf{c}}
\newcommand{\norm}[1]{\lVert#1\rVert}
\newcommand{\fnorm}[1]{\lVert#1\rVert_{F}}
\newcommand{\tvnorm}[1]{\lVert#1\rVert_{TV}}
\newcommand{\infnorm}[1]{\lVert#1\rVert_\infty}
\newcommand{\inner}[2]{\langle#1,#2\rangle}
\newcommand{\detail}[1]{(#1)}
\begin{document}
\title{An Exposition on Wigner's Semicircular Law}
\author{Wooyoung Chin}
\maketitle
\abstract{
	We revisit the moment method to obtain a slightly strengthened version
	of the usual semicircular law.
	Our version assumes only that the upper triangular entries of Hermitian
	random matrices are independent, have mean zero and variances close to
	$1/n$ in a certain sense, and satisfy a Lindeberg-type condition.
	As an application, we derive another semicircular law for the case when
	the sum of a row converges in distribution to the standard normal
	distribution, including the case where all matrix entries may have
	infinite variance.
	The appendix, making up the majority of the paper, provides for those
	new to the subject, a rigorous exposition of most details involved,
	including also a proof of a semicircular law that uses the
	Stieltjes transform method.
}

\tableofcontents

\section{Introduction}

If $A$ is an $n \times n$ Hermitian matrix, then $A$ is diagonalizable
and all eigenvalues of $A$ are real.
We denote the eigenvalues of $A$, counted with multiplicities, as
\[ \lambda_1(A) \ge \cdots \ge \lambda_n(A). \]
\detail{For details, see Subsection \ref{subsec:spectra_basic}.}
We define the \emph{spectral distribution} of $A$ as the Borel probability
measure
\[ \mu_A := \frac{1}{n} \sum_{i=1}^n \delta_{\lambda_i(A)} \]
on $\R$.
If $X$ is a random $n \times n$ Hermitian matrix, then
$\mu_X$ is a random Borel probability measure on $\R$.
\detail{For details, see Subsection \ref{subsec:rand_hermitian}.}

\begin{definition}[The semicircle distribution]
	The Borel probability measure $\scdist$ on $\R$ given by
	\begin{equation*}
		\scdist(dx) = \frac{1}{2\pi} \sqrt{(4-x^2)_+}\,dx
	\end{equation*}
	is called the \emph{semicircle distribution}.
	Here, $x_+ := x \vee 0 = \max\{x,0\}$.
\end{definition}

Since the seminal work \cite{Wig55} by Wigner, there have been many theorems
that assume $W_1,W_2,\ldots$ to be random $1 \times 1$, $2 \times 2, \ldots$
Hermitian matrices satisfying certain conditions, and show that $\mu_{W_n}$
converges in some sense to $\scdist$.
Let us call such theorems \emph{semicircular laws}.

In the main part of this paper, we study semicircular laws assuming joint
independence of the upper triangular entries (we include the diagonal in
both the upper and the lower triangles).
We first prove a semicircular law (Theorem \ref{thm:sclaw})
with rather weak assumptions.
In particular, we don't require the entries to be identically distributed,
and we allow the entries to deviate from unit variance.
It is notable that other than the mostly standard reduction steps, our proof is
just a simple application of the moment method.

After proving the main theorem, we apply the theorem to obtain
another semicircular law (Theorem \ref{thm:gauss_sclaw})
which more or less assumes that the sum of a row
converges in distribution to the standard normal distribution.
This theorem allows the entries to have infinite variances.

The appendices make up about two thirds of this paper.
There we provide a self-contained and rigorous account of the details
(including the measure-theoretic ones) involved in the main part of the paper.
In the main part of the paper, we refer to the appendix whenever we need
a fact given there.
After that, for completeness we provide a proof of a semicircular law
(little weaker than the one proved in the main part, but still stronger than
the laws appear in many textbooks)
which uses the Stieltjes transform method.

We assumed no prior knowledge more advanced than one-semester courses
in probability theory and combinatorics.
A total newcomer to the field might want to read the Appendices
\ref{sec:prob_meas_r}--\ref{sec:concen_meas} first,
then read the main part, and then go to Appendix
\ref{sec:unit_reduction}--\ref{sec:stieltjes}.

Now we state our main theorem.

\begin{theorem}[A general semicircular law]
\label{thm:sclaw}
	For each $n \in \N$, let $W_n = (\wn)_{i,j=1}^n$ be a random $n \times n$
	Hermitian matrix \detail{see Definition \ref{def:rand_hermitian}}
	whose upper triangular entries are jointly independent, have mean zero,
	and have finite variances.
	We assume that $W_1, W_2, \ldots$ are defined on the same
	probability space.
	If
	\begin{equation}
		\label{eq:sclaw_var}
		\on{
			\sum_{i=1}^n \biggl|
				\sum_{j=1}^n \Bigl( \Varwn - \frac{1}{n} \Bigr)
			\biggr|
		},
	\end{equation}
	\begin{equation}
		\label{eq:sclaw_rowbdd}
		\on{
			\sum_{i=1}^n \biggl( \sum_{j=1}^n \Varwn - C \biggr)_+
		} \qquad \text{for some finite $C \ge 0$,}
	\end{equation}
	and
	\begin{equation}
		\label{eq:sclaw_lind}
		\on{ \sum_{i,j=1}^n \lindn }
		\qquad \text{for every $\eps>0$,}
	\end{equation}
	then $\esdn \tto \scdist$ as $n \to \infty$ a.s.
\end{theorem}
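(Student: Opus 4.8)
\emph{Overall plan.}
The plan is to use the moment method. Because $\scdist$ has compact support it is determined by its moments, namely $\int x^{k}\,\scdist(dx)=0$ for odd $k$ and $=C_{m}:=\frac1{m+1}\binom{2m}{m}$ for $k=2m$; and for probability measures on $\R$, convergence of every moment to that of a moment-determined measure forces weak convergence (the Fr\'echet--Shohat lemma; see Appendix~\ref{sec:prob_meas_r}). So it suffices to fix $k\in\N$, show $\frac1n\tr(W_n^{k})\to\int x^{k}\,\scdist(dx)$ almost surely, and then intersect these countably many almost sure events.

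\emph{Reductions.}
First I would run the by-now standard truncation reductions, using the perturbation stability of $\esdn$ from Appendix~\ref{sec:concen_meas}. By \eqref{eq:sclaw_rowbdd} only $o(n)$ rows $i$ satisfy $\sum_j\Varwn>C'$ for a fixed $C'>C$, so zeroing those rows and their columns is a rank-$o(n)$ perturbation and changes $\esdn$ by $o(1)$; similarly, once $\eps_n\downarrow0$ is chosen to decay slowly enough, only $o(n)$ entries have variance exceeding $2\eps_n^{2}$, and discarding them costs another rank-$o(n)$ perturbation. For the surviving entries I would replace $\wn$ by $\wn\I(|\wn|\le\eps_n)$ recentred to mean zero; choosing $\eps_n$ so that $\sum_n\frac1n\sum_{i,j}\E\bigl[|\wn|^{2}\I(|\wn|>\eps_n)\bigr]<\infty$ — possible since for each fixed $\eps$ the average tends to $0$ by \eqref{eq:sclaw_lind} — the scaled squared Frobenius distance between the matrices has summable expectation, hence tends to $0$ a.s.\ by Borel--Cantelli, and a Hoffman--Wielandt bound then shows $\esdn$ is unaffected in the limit. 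A somewhat delicate but routine check, using \eqref{eq:sclaw_lind} and the chosen rate of $\eps_n$, confirms that \eqref{eq:sclaw_var} survives all of this. After the reductions the entries are bounded by $O(\eps_n)\to0$, each variance is $O(\eps_n^{2})$, all row sums of variances are $\le C'$, and \eqref{eq:sclaw_var} still holds; I relabel this matrix as $W_n$.

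\emph{Expected moments.}
Next I would expand $\E\bigl[\frac1n\tr W_n^{k}\bigr]=\frac1n\sum_{i_1,\dots,i_k}\E[w_{i_1i_2}\cdots w_{i_ki_1}]$ over closed walks of length $k$ on $\{1,\dots,n\}$ (suppressing the superscript $(n)$). Mean zero and independence force each edge of a contributing walk to be used at least twice, so such a walk spans at most $\lfloor k/2\rfloor+1$ vertices. For $k=2m$, the walks on exactly $m+1$ vertices are those whose edge multiset is a tree with every edge doubled; for each choice of $m+1$ distinct labels there are $C_m$ of them (the Dyck-path bijection of the classical argument), and peeling the tree leaf by leaf factors the corresponding sum, each summation over a leaf's label contributing $1+\tilde e_u$, where $\tilde e_u:=\sum_j\E|w_{uj}|^{2}-1$ satisfies $\sum_u|\tilde e_u|=o(n)$ by \eqref{eq:sclaw_var} and $|\tilde e_u|\le C'$ by the row-sum bound; hence $\frac1n\sum_u\tilde e_u^{2}\le\frac{C'}n\sum_u|\tilde e_u|\to0$ and this contribution tends to $C_m$. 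Every other closed walk spans $\le m$ vertices, so its graph carries a cycle or an edge of multiplicity $\ge4$: in the first case one entry-factor is at most its variance $=O(\eps_n^{2})$, in the second one has $\E|w|^{4}=O(\eps_n^{2})\,\E|w|^{2}$, and bounding the remaining factors by the row sums of variances shows the total of all these walks is $O_k(\eps_n^{2})\to0$ (here $|\E[(\wn)^{2}]|\le\Varwn$ handles complex entries, and the diagonal is absorbed as usual). For odd $k$ no doubled tree exists, so the whole expectation tends to $0$. Thus $\E\bigl[\frac1n\tr W_n^{k}\bigr]\to\int x^{k}\,\scdist(dx)$.

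\emph{From expectation to almost sure, and the main obstacle.}
To get the almost sure statement I would bound $\Var\bigl[\frac1n\tr W_n^{k}\bigr]$ by expanding over \emph{pairs} of closed walks and noting that a pair with nonzero covariance must share an edge, which forces the union of the two walks to span one vertex fewer than two disjoint walks would; this yields $\Var\bigl[\frac1n\tr W_n^{k}\bigr]=O_{k,C'}(n^{-2})$, summable in $n$, and Chebyshev plus Borel--Cantelli give $\frac1n\tr W_n^{k}\to\lim\E[\frac1n\tr W_n^{k}]$ a.s. Together with the reductions and the preceding paragraph, this proves $\esdn\tto\scdist$ a.s. I expect the real work to be concentrated in the third step: under a general variance profile one must show that the doubled-tree walks are the \emph{only} ones surviving in the limit and that their total is exactly $C_m$ — this is precisely where \eqref{eq:sclaw_var} (bounding the deviation of the effective per-row variance from $1$) and \eqref{eq:sclaw_rowbdd} (through the row removal, which is what prevents the cyclic walks, available here because the entries need not be of order $n^{-1/2}$ pointwise, from contributing) have to be used in tandem. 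The reduction steps and the variance estimate are close to the classical ones.
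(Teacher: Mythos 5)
Your overall plan — the moment method, truncation, tree/Dyck-path counting with the tree‐peeling used to handle a nonuniform variance profile — is essentially the paper's, and the heart of the moment computation you sketch is close to Lemmas~\ref{lem:tree_bdd}, \ref{lem:tree_cont}, \ref{lem:no_walk_once}, and \ref{lem:no_mult_cycle}. But two of your steps deviate from the paper, and one of them has a genuine gap.

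\textbf{A different reduction to the absolute row bound.} Where you zero the $o(n)$ rows (and columns) with variance sum $> C'$ and invoke the rank inequality, the paper instead carries out an explicit rescaling (Section~\ref{sec:prelim_reductions}, ``Rescaling'') that multiplies the entries by factors $c^{(n)}_{ij}\in[0,1]$ chosen row by row so that every row-sum of variances becomes $\le C$, with \eqref{eq:sclaw_rowbdd} controlling the Frobenius cost. Your route is plausible, but you skip a nontrivial verification: after deleting the bad index set $R$, condition~\eqref{eq:sclaw_var} a priori changes by $\frac1n\sum_{i\notin R}\bigl|\sum_{j\in R}(\Varwn - \tfrac1n)\bigr|$, and showing this is $o(1)$ requires using \eqref{eq:sclaw_rowbdd} again to bound $\sum_{j\in R}\sum_i\Varwn$; this is checkable but not automatic, and your proposal treats it as routine.

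\textbf{The gap: passing from $\E\esdn$ to an a.s.\ statement.} You plan to prove a.s.\ convergence by (a) doing the truncation in a way that makes $\frac1n\fnorm{W_n - W'_n}^2\to0$ a.s., and then (b) bounding $\Var[\frac1n\tr(W'_n)^k]$. For (a) you say you can choose $\eps_n\downarrow 0$ so that $\sum_n \frac1n\sum_{i,j}\E[|\wn|^2\I(|\wn|>\eps_n)]<\infty$, ``possible since for each fixed $\eps$ the average tends to $0$.'' That inference is not valid: hypothesis~\eqref{eq:sclaw_lind} gives only a $o(1)$ rate with no control on how slowly, so one cannot in general force summability by slowing down $\eps_n$ (take the Lindeberg quantity to decay like $1/\log n$ for every admissible $\eps_n$). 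Without summability, Borel--Cantelli does not apply, the Frobenius difference need not go to $0$ a.s., and the a.s.\ transfer from $W'_n$ back to $W_n$ collapses. Note also that for $k\ge 3$ the quantity $\frac1n\tr W_n^k$ for the \emph{original} matrix need not even be integrable under the finite-variance hypothesis, so you cannot bypass the truncated matrix entirely. The paper avoids this entirely: Section~\ref{sec:prelim_reductions} first shows (via the McDiarmid-type concentration Theorem~\ref{thm:concen_spec}, applied to bounded-variation test functions, together with Borel--Cantelli) that $\E\esdn\tto\scdist$ already implies $\esdn\tto\scdist$ a.s.\ \emph{for the original matrices}, and only afterwards carries out all truncation/centering/rescaling reductions at the level of expected measures via Corollary~\ref{cor:pertrub_frob_norm_exp}. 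That concentration step is exactly what your outline is missing; once it is in place, the trace-variance estimate you propose becomes unnecessary.
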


\begin{remark}
	One sufficient condition for \eqref{eq:sclaw_var} and \eqref{eq:sclaw_rowbdd}
	to hold is
	\[ \on{ \sum_{i,j=1}^n \Bigl|\Varwn-\frac{1}{n}\Bigr| }. \]
	Note that we can take $C = 1$ to show \eqref{eq:sclaw_rowbdd}.
	An even more special case is when we have $\Varwn = 1/n$
	for all $n = 1,2,\ldots$ and $i,j = 1,\ldots,n$.
	This case is Theorem 2.9 in \cite{BS10}.
	If there is some finite $C \ge 0$ such that
	\begin{equation}
		\label{eq:abs_rowbdd}
		\sum_{j=1}^n \Varwn \le C \qquad
		\text{for all $n = 1,2,\ldots$ and $i = 1,\ldots,n$,}
	\end{equation}
	then \eqref{eq:sclaw_rowbdd} holds for the same $C$.
	This case is more or less equivalent to Corollary 1 in \cite{GNT15},
	which is proved first for matrices with Gaussian entries,
	and then generalized to arbitrary matrices by proving an analogue
	of the Lindeberg universality principle for random matrices.
	In this paper, we will prove Theorem \ref{thm:sclaw} directly
	by the moment method without appealing to the universality principle.
\end{remark}

\begin{remark}
	Theorem \ref{thm:sclaw} assumes no dependence between $W_1$, $W_2$, \ldots,
	yet it asserts an a.s.\ convergence.
	This is in contrast to some versions of the semicircular law
	where only convergence in probability is asserted
	(e.g. \cite[Theorem 2.1.1]{AGZ10}), or $\sqrt{n}W_n$ is assumed to be
	the top left $n \times n$ minor of a fixed infinite random Hermitian matrix
	(e.g. \cite[Theorem 2.4.2]{Tao12}).
	If $\mu_1,\mu_2,\ldots$ are Borel probability distributions on a
	separable metric space $S$, and $c \in S$, then the following
	two statements are equivalent:
	\begin{enumerate}
		\item
		$X_n \to c$ a.s.\ whenever $X_1,X_2,\ldots$ are random elements of $S$
		defined on a common probability space such that each $X_n$
		has distribution $\mu_n$;
		\item
		$\sum_{n=1}^\infty \mu_n\bigl(\{\, x \in S \mid d(x,c) > \eps \,\}\bigr)
		< \infty$ for all $\eps > 0$.
	\end{enumerate}
	This can be shown using the Borel-Cantelli lemmas
	\cite[Theorem 4.3 and 4.4]{Bil12}.
	This type of strong convergence is possible in Theorem \ref{thm:sclaw}
	because of a strong concentration of measure result we will use.
\end{remark}

The rest of the paper is organized as follows.
In Section \ref{sec:prelim_reductions}, we will first reduce Theorem
\ref{thm:sclaw} to a form with stronger assumptions.
Then we will see that the reduced semicircular law follows from
some moment computations.
In Section \ref{sec:trees}, we will develop a tool needed for the moment
computation, and in Section \ref{sec:comp_moments}, we will perform
the actual moment computation.
In Section \ref{sec:gaussian}, we will derive the aforementioned
semicircular law which assumes Gaussian convergence of the sum of a row.

\section{Preliminary reductions}
\label{sec:prelim_reductions}
Assume that $W_n$ satisfies the conditions of Theorem \ref{thm:sclaw}.

\subsection{Convergence in expectation is enough}
If we have $\E \esdn \tto \scdist$ \detail{for the meaning of $\E\mu_{W_n}$,
see Theorem \ref{thm:expect_prob}}, then
\[ \lim_{n\to\infty} \E\biggl[\int_\R f\,d\esdn\biggr] = \int_\R f\,d\scdist \]
for all continuous and bounded $f\colon\R\to\R$.
By the concentration inequality Theorem \ref{thm:concen_spec} for spectral
measures and the Borel-Cantelli lemma, we have
\[ \lim_{n\to\infty} \int_\R f_{p,q}\,d\esdn = \int_\R f_{p,q}\,d\scdist
\qquad \text{a.s.}\]
for all $p,q \in \Q$ with $p < q$, where $f_{p,q}\colon\R\to\R$ is
$1$ on $(-\infty,p\,]$, $0$ on $[\,q,\infty)$, and linear on $[\,p,q\,]$.
This implies that $\esdn \tto \scdist$ a.s.\ by Theorem \ref{thm:ch_weak_conv}.
Therefore, it is enough to show $\E\esdn \tto \scdist$.

\subsection{Truncation}
Since \eqref{eq:sclaw_lind} holds, we have positive integers
$n_1 < n_2 < \ldots$ such that
\[ \frac{1}{n} \sum_{i,j=1}^n \E\bigl[\,|\wn|^2\semicolon|\wn|>1/k\,\bigr]
\le 1/k\]
for all $n \ge n_k$ for each $k \in \N$.
If we let $\eta_n = 1$ for all $n \in \{1,\ldots,n_1-1\}$,
and $\eta_n = 1/k$ for all $n \in \{n_k,\ldots,n_{k+1}-1\}$
for each $k \in \N$, then $\eta_n \to 0$ and
\[ \on{ \sum_{i,j=1}^n \E\bigl[\,|\wn|^2\semicolon|\wn|>\eta_n\,\bigr] }. \]

Let $W_n' := \bigl(\wn\I(|\wn| \le \eta_n)\bigr)_{i,j=1}^n$.
Since
\[ \frac{1}{n}\E\bigl[\tr(W_n-W_n')^2\bigr]
= \frac{1}{n}\sum_{i,j=1}^n \E\bigl[\,|\wn|^2 \semicolon |\wn|>\eta_n\,\bigr]
\to 0 \qquad \text{as $n \to \infty$}, \]
it is enough to show $\E\mu_{W'_n} \tto \scdist$
by Corollary \ref{cor:pertrub_frob_norm_exp} and Theorem \ref{thm:ch_weak_conv}.

\subsection{Centralization}
For each $n = 1,2,\ldots$ and $i,j = 1,\ldots,n$, let
\[ v_{i,j}^{(n)} := \wn\I(|\wn| \le \eta_n)
- \E\bigl[\,\wn \semicolon |\wn| \le \eta_n\,\bigr], \]
and let $W_n' - \E W_n' := (v_{ij}^{(n)})_{i,j=1}^n$.
Since
\begin{multline*}
	\frac{1}{n}\sum_{i,j=1}^n \bigl|\E\bigl[\,
	\wn \semicolon |\wn|\le\eta_n \,\bigr]\bigr|^2
	= \frac{1}{n}\sum_{i,j=1}^n \bigl|\E\bigl[\,
	\wn \semicolon |\wn|>\eta_n \,\bigr]\bigr|^2 \\
	\le \frac{1}{n}\sum_{i,j=1}^n \E\bigl[\,
	|\wn|^2 \semicolon |\wn|>\eta_n \,\bigr]
	\to 0 \qquad \text{as $n \to \infty$,}
\end{multline*}
it is enough to show $\E\mu_{W'_n-\E W'_n} \tto \scdist$ by
Corollary \ref{cor:pertrub_frob_norm_exp} and Theorem \ref{thm:ch_weak_conv}.

We claim that $W'_n-\E W'_n$ satisfies all conditions
$W_n$ is supposed to satisfy in Theorem \ref{thm:sclaw}.
The fact that \eqref{eq:sclaw_var} and \eqref{eq:sclaw_rowbdd}
still hold even if we replace $W_n$ by $W'_n-\E W'_n$ follows
from the following:
\begin{equation*}
	\begin{split}
		\sum_{i,j=1}^n &\bigl|\Varwn - \Var\bigl[v_{ij}^{(n)}\bigr]\bigr| \\
		&= \sum_{i,j=1}^n \Bigl( \E\bigl[\, |\wn|^2 \semicolon
		|\wn| > \eta_n \,\bigr]
		+ \bigl|\E\bigl[\, \wn \semicolon
		|\wn| \le \eta_n \,\bigr]\bigr|^2 \Bigr) \\
		&\le 2 \sum_{i,j=1}^n \E\bigl[\, |\wn|^2 \semicolon
		|\wn| > \eta_n \,\bigr].
	\end{split}
\end{equation*}
The condition \eqref{eq:sclaw_lind} for $W'_n-\E W'_n$ easily follows
from the bound $|\wn| \le \eta_n$.
Since $|v_{i,j}^{(n)}| \le 2\eta_n$ for all
$n = 1,2,\ldots$ and $i,j = 1,\ldots,n$, by doubling $\eta_1$, $\eta_2$, \ldots
we have $\eta_n \to 0$ and $|v_{i,j}^{(n)}| \le \eta_n$.
Thus, from now on, we can assume that $|\wn| \le \eta_n$ for some
$\eta_1, \eta_2, \ldots > 0$ satisfying $\eta_n \to 0$.

\subsection{Rescaling}
Fix $n \in \N$.
We will choose a number $0 \le c_{ij}^{(n)} \le 1$ for each $i,j = 1,\ldots,n$
so that $c_{ij}^{(n)} = c_{ji}^{(n)}$ always hold.
Start by letting $c_{ij}^{(n)} = 1$ for all $i,j = 1,\ldots,n$.
We start with the first row and the first column.
If $\sum_{j=1}^{n} \Var\bigl[w_{1j}^{(n)}\bigr] \le C$, then do nothing.
Otherwise, lower $c_{11}^{(n)},\ldots,c_{1n}^{(n)}$ (not below $0$) so that
\[ \sum_{j=1}^n \Bigl[\bigl(c_{1j}^{(n)}\bigr)^2
\Var\bigl[w_{1j}^{(n)}\bigr]\Bigr] = C, \]
and let $c_{j1}^{(n)} := c_{1j}^{(n)}$ for all $j = 1,\ldots,n$.
We note that at this point we have
\[ \sum_{i,j=1}^{n} \Bigl[\bigl(1 - \bigl(c_{ij}^{(n)}\bigr)^2\bigr)
\Var\bigl[w_{ij}^{(n)}\bigr]\Bigr]
\le 2 \biggl(\sum_{j=1}^n \Var\bigl[ w_{1j}^{(n)} \bigr] - C\biggr)_+. \]

Assume that $k \in \{2,\ldots,n\}$, and that we've examined up to
$(k-1)$-th row.
If
\[\sum_{j=1}^{k-1} \Bigl[\bigl(c_{kj}\bigr)^2\Var\bigl[w_{kj}^{(n)}\bigr]\Bigr]
+ \sum_{j=k}^n \Var\bigl[w_{kj}^{(n)}\bigr] \le C, \]
then do nothing.
Otherwise, lower $c_{k1}^{(n)},\ldots,c_{kn}^{(n)}$ (not below $0$) so that
\[ \sum_{j=1}^n \Bigl[\bigl(c_{kj}^{(n)}\bigr)^2
\Var\bigl[w_{kj}^{(n)}\bigr]\Bigr] = C, \]
and let $c_{jk}^{(n)} = c_{kj}^{(n)}$ for all $j = 1,\ldots,n$.
At this point we have
\[ \sum_{i,j=1}^{n} \Bigl[\bigl(1 - \bigl(c_{ij}^{(n)}\bigr)^2\bigr)
\Var\bigl[w_{ij}^{(n)}\bigr]\Bigr]
\le 2 \sum_{i=1}^k \biggl(\sum_{j=1}^n
\Var\bigl[ w_{ij}^{(n)} \bigr] - C\biggr)_+. \]
This can be shown by an induction on $k$.
After completing the whole process, we are left with numbers
$0 \le c_{ij}^{(n)} \le 1$ such that
\begin{equation}
	\label{eq:rescale_rowbdd}
	\sum_{j=1}^n \Bigl[\bigl(c_{ij}^{(n)}\bigr)^2
	\Var\bigl[w_{ij}^{(n)}\bigr]\Bigr] \le C
\end{equation}
for all $i = 1,\ldots,n$, and
\begin{equation}
	\label{eq:rescale_change}
	\sum_{i,j=1}^{n} \Bigl[\bigl(1 - \bigl(c_{ij}^{(n)}\bigr)^2\bigr)
	\Var\bigl[w_{ij}^{(n)}\bigr]\Bigr]
	\le 2 \sum_{i=1}^n \biggl(\sum_{j=1}^n
	\Var\bigl[ w_{ij}^{(n)} \bigr] - C\biggr)_+.
\end{equation}

Let $\widehat{W}_n = \bigl( c_{ij}^{(n)} w_{ij}^{(n)} \bigr)_{i,j=1}^n$.
Since $(1-c)^2 \le 1-c^2$ holds for any $0 \le c \le 1$, we have
\begin{multline*}
	\frac{1}{n} \E\bigl[ \tr(W_n - \widetilde{W}_n)^2 \bigr]
	= \frac{1}{n}\sum_{i,j=1}^n \Bigl[
	\bigl(1 - c_{ij}^{(n)}\bigr)^2 \Varwn \Bigr] \\
	\le \frac{2}{n} \sum_{i=1}^n \biggl(\sum_{j=1}^n
	\Var\bigl[ w_{ij}^{(n)} \bigr] - C\biggr)_+
	\to 0 \qquad \text{as $n \to \infty$}
\end{multline*}
by \eqref{eq:rescale_change}.
Thus, by Corollary \ref{cor:pertrub_frob_norm_exp} and Theorem
\ref{thm:ch_weak_conv}, it is enough to show
$\E\mu_{\widehat{W}_n} \tto \scdist$.

The altered matrix $\widehat{W}_n$ has an advantage over $W_n$ that
\eqref{eq:rescale_rowbdd} holds.
Also, the modulus of each entry of $\widehat{W}_n$ is still bounded by $\eta_n$.
We claim that $\widehat{W}_n$ also satisfies all conditions $W_n$ is assumed to
satisfy in Theorem \ref{thm:sclaw}.
First, each entry of $\widehat{W}_n$ obviously has mean zero.
Also, since each entry of $\widehat{W}_n$ has modulus less than or equal to
the corresponding entry of $W_n$, the condition
\eqref{eq:sclaw_lind} is satisfied by $\widehat{W}_n$.
The condition \eqref{eq:sclaw_rowbdd} for $\widehat{W}_n$ obviously holds
as we have an even stronger property \eqref{eq:rescale_rowbdd}.
Finally, \eqref{eq:sclaw_var} for $\widehat{W}_n$ follows from
\eqref{eq:rescale_change} and the fact that \eqref{eq:sclaw_rowbdd} is
satisfied by $W_n$.
This proves our claim, and so from now on, we can also assume that
\eqref{eq:abs_rowbdd} is true.

\subsection{Reduction to moment convergence}
On top of the assumptions of Theorem \ref{thm:sclaw}, we now also have
the following.
\begin{enumerate}
	\item
	There are $\eta_1,\eta_2,\ldots > 0$ with $\lim_{n\to\infty} \eta_n = 0$
	such that $|\wn| \le \eta_n$ for all $n = 1,2,\ldots$ and
	$i,j = 1,\ldots,n$.
	\item
	There is some finite $C \ge 0$ such that \eqref{eq:abs_rowbdd} holds.
\end{enumerate}
Since $|\wn| \le \eta_n$, every eigenvalue of $W_n$ has
absolute value at most $n\eta_n$.
So, $\E\esdn$ is supported on $[-n\eta_n,n\eta_n]$,
and in particular $\E\esdn$ has moments of all orders.
As
\begin{equation*}
	\biggl|\sum_{k=1}^\infty \frac{1}{k!} \int_\R x^k \,\scdist(dx) r^k \biggr|
	\le \sum_{k=1}^\infty \frac{|2r|^k}{k!} < \infty
\end{equation*}
for any $r \in \R$ by the ratio test, $\scdist$ is determined by its moments
by \cite[Theorem 30.1]{Bil12}.
Thus, by the moment convergence theorem \cite[Theorem 30.2]{Bil12},
it is enough to show
\begin{equation*}
	\lim_{n \to \infty} \int_\R x^k \,\E\esdn(dx) = \int_\R x^k \,\scdist(dx)
	\qquad \text{for all $k = 1,2,\ldots$.}
\end{equation*}

For each $k = 1,2,\ldots$, since there are continuous bounded
$g_{k,n}: \R \to \R$ with $g_{k,n}(x) = x^k$ for all $x \in [-n\eta_n,n\eta_n]$,
we have
\begin{equation*}
	\int_\R x^k \,\E\esdn(dx) = \int_\R g_{k,n} \,d\E\esdn
	= \E \int_\R g_{k,n} \,d\esdn = \frac{1}{n} \E \tr W_n^k.
\end{equation*}
On the other hand, we can directly compute the moments of $\scdist$ as follows.

\begin{lemma}
	For any $m=1,2,\ldots$, we have
	\begin{equation*}
		\int_\R x^{2m} \,\scdist(dx) = \frac{1}{m+1} \binom{2m}{m}.
	\end{equation*}
\end{lemma}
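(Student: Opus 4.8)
The plan is to evaluate the integral directly by a trigonometric substitution, reducing it to a Wallis-type integral. Writing out the definition of $\scdist$, the quantity to be computed is
\[
\int_\R x^{2m}\,\scdist(dx) = \frac{1}{2\pi}\int_{-2}^{2} x^{2m}\sqrt{4-x^2}\,dx,
\]
and substituting $x = 2\cos\theta$ with $\theta$ ranging over $[0,\pi]$ (so that $\sqrt{4-x^2} = 2\sin\theta \ge 0$ and $dx = -2\sin\theta\,d\theta$) turns this into
\[
\frac{2^{2m+1}}{\pi}\int_0^\pi \cos^{2m}\theta\,\sin^2\theta\,d\theta.
\]

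Next I would write $\sin^2\theta = 1-\cos^2\theta$ to reduce everything to the even cosine integrals $W_k := \int_0^\pi \cos^{2k}\theta\,d\theta$, so that the moment equals $\frac{2^{2m+1}}{\pi}\bigl(W_m - W_{m+1}\bigr)$. The standard Wallis reduction — integration by parts giving $W_k = \frac{2k-1}{2k}W_{k-1}$ with $W_0 = \pi$ — yields the closed form $W_k = \frac{\pi}{4^k}\binom{2k}{k}$; I would either quote this or include the one-line induction. Substituting and using $\frac{2^{2m+1}}{4^m}=2$, $\frac{2^{2m+1}}{4^{m+1}}=\tfrac12$,
\[
\int_\R x^{2m}\,\scdist(dx) = 2\binom{2m}{m} - \frac{1}{2}\binom{2m+2}{m+1}.
\]

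Finally I would simplify using the elementary identity $\binom{2m+2}{m+1} = \frac{2(2m+1)}{m+1}\binom{2m}{m}$, which gives
\[
2\binom{2m}{m} - \frac{2m+1}{m+1}\binom{2m}{m} = \frac{2(m+1)-(2m+1)}{m+1}\binom{2m}{m} = \frac{1}{m+1}\binom{2m}{m},
\]
as claimed. There is no real obstacle here; the only mild care needed is in recalling (or re-deriving) the Wallis formula for $W_k$ and in the final binomial manipulation. An alternative route, which I would mention only in passing, is to integrate by parts in $\int_{-2}^2 x^{2m}\sqrt{4-x^2}\,dx$ directly to obtain the recursion $M_{2m} = \frac{2(2m-1)}{m+1}\,M_{2m-2}$ for the moments $M_{2m}$, with $M_0 = 1$, and then induct; this reproduces the Catalan recurrence but is slightly more fiddly than the substitution above.
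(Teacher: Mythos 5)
Your proof is correct and follows essentially the same route as the paper: the substitution $x=2\cos\theta$, the reduction via $\sin^2\theta=1-\cos^2\theta$ to even cosine-power integrals, and the final simplification $2\binom{2m}{m}-\tfrac12\binom{2m+2}{m+1}=\tfrac{1}{m+1}\binom{2m}{m}$ all match. The only (immaterial) difference is that you evaluate $\int_0^\pi\cos^{2k}\theta\,d\theta$ by the Wallis integration-by-parts recursion, whereas the paper expands $(e^{i\theta}+e^{-i\theta})^{2k}$ and picks out the constant term.
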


\begin{proof}
	A trigonometric substitution $x = 2\cos\theta$ gives
	\begin{equation*}
		\begin{split}
			\int_\R x^{2m} \,\scdist(dx)
			&= \frac{1}{2\pi} \int_{-2}^2 x^{2m} \sqrt{4-x^2} \,dx
			= \frac{2}{\pi} \int_{-\pi}^0 2^{2m}
			\cos^{2m}\theta \sin^2\theta \,d\theta\\
			&= \frac{2^{2m+1}}{\pi}
			\left[ \int_{-\pi}^0 \cos^{2m}\theta \,d\theta
			- \int_{-\pi}^0 \cos^{2m+2}\theta \,d\theta \right].
		\end{split}
	\end{equation*}
	As
	\begin{equation*}
		\int_{-\pi}^0 \cos^{2l} \theta \,d\theta
		= \frac{1}{2^{2l+1}} \int_{-\pi}^\pi
		(e^{i\theta}+e^{-i\theta})^{2l} \,d\theta
		= \frac{\pi}{2^{2l}} \binom{2l}{l}
	\end{equation*}
	for any $l=1,2,\ldots$, we have
	\begin{equation*}
		\int_\R x^{2m} \,\scdist(dx)
		= 2\binom{2m}{m} - \frac{1}{2} \binom{2m+2}{m+1}
		= \frac{1}{m+1} \binom{2m}{m}.
	\end{equation*}
\end{proof}

Note that $\int_\R x^k \,\scdist(dx) = 0$ whenever $k \in \N$ is odd.
Thus, it is enough to show that
\begin{equation}
	\label{eq:odd_moment_conv}
	\lim_{n\to\infty} \frac{1}{n} \E \tr W_n^k = 0
	\qquad \text{for all odd $k \in \N$,}
\end{equation}
and that
\begin{equation}
	\label{eq:even_moment_conv}
	\lim_{n\to\infty} \frac{1}{n} \E \tr W_n^k = \frac{1}{k/2+1} \binom{k}{k/2}
	\qquad \text{for all even $k \in \N$.}
\end{equation}
These will be proved in Section \ref{sec:comp_moments}
by using the content of Section \ref{sec:trees}.

\section{Trees and products of variances}
\label{sec:trees}
Our graphs will be undirected.
We allow graphs to have loops, but don't allow them to have multiple edges.
Let $G$ be a finite graph.
For any $n \in \N$, denote by $I(G,n)$ the collection of all injections
from $V(G)$ into $\{1,\ldots,n\}$.
Given any $F \in I(G,n)$ and $e \in E(G)$ with ends $u,v$,
we let
\[\rho_{e,F}^{(n)} := \Var \bigl[ w_{F(u)F(v)}^{(n)} \bigr].\]
It is well-defined since each $W_n$ is Hermitian.
Then we let
\begin{equation*}
	P(G,F) := \prod_{e \in E(G)} \rho_{e,F}^{(n)}.
\end{equation*}
Here $P$ stands for ``product.'' Also, the notation $\rho_{e,F}^{(n)}$
will no longer appear.

\begin{lemma}
	\label{lem:tree_bdd}
	If $T$ is a finite tree with $m$ edges, $u \in V(T)$, $n \in \N$,
	and $i \in \{1,\ldots,n\}$, then
	\begin{equation}
		\label{eq:tree_bdd}
		\sum_{\substack{F \in I(T,n)\\F(u)=i}} P(T,F) \le C^m.
	\end{equation}
\end{lemma}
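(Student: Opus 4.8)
The plan is to induct on the number of edges $m$ of the tree $T$. The base case $m=0$ is immediate: then $T$ is a single vertex, the only edge-product is the empty product $1$, and there is exactly one $F$ with $F(u)=i$ (assuming $n\ge 1$), so the left side of \eqref{eq:tree_bdd} is $1 = C^0$. (If one worries about the degenerate possibility $C<1$, note that the empty product is still $\le C^0=1$; and more importantly the inductive step below will only ever multiply by factors that are sums of variances over a row, which are $\le C$ by \eqref{eq:abs_rowbdd}, so everything stays consistent.)

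For the inductive step, suppose $T$ has $m\ge 1$ edges and the claim holds for all trees with fewer edges. Pick a leaf $\ell$ of $T$ other than the root $u$ (a finite tree with at least one edge has at least two leaves, so such an $\ell$ exists unless $u$ itself is the unique... — more carefully, if $u$ is a leaf we still have a second leaf $\ell\ne u$). Let $e$ be the unique edge at $\ell$, with other endpoint $w$, and let $T' := T - \ell$, a tree with $m-1$ edges that still contains $u$ and $w$. The key combinatorial observation is that every $F\in I(T,n)$ with $F(u)=i$ restricts to some $F'\in I(T',n)$ with $F'(u)=i$, and for each such $F'$ the extensions $F$ are exactly the choices of a value $F(\ell)=j\in\{1,\dots,n\}$ distinct from all values already used by $F'$. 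Under this correspondence $P(T,F) = P(T',F')\cdot \Var\bigl[w_{F'(w)\,j}^{(n)}\bigr]$. Therefore
\begin{equation*}
	\sum_{\substack{F\in I(T,n)\\F(u)=i}} P(T,F)
	= \sum_{\substack{F'\in I(T',n)\\F'(u)=i}} P(T',F')
		\sum_{\substack{j=1\\ j\notin \operatorname{im} F'}}^n
		\Var\bigl[w_{F'(w)\,j}^{(n)}\bigr].
\end{equation*}

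Now I bound the inner sum: dropping the restriction $j\notin\operatorname{im}F'$ only adds nonnegative terms, so the inner sum is at most $\sum_{j=1}^n \Var\bigl[w_{F'(w)\,j}^{(n)}\bigr] \le C$ by \eqref{eq:abs_rowbdd}. Hence
\begin{equation*}
	\sum_{\substack{F\in I(T,n)\\F(u)=i}} P(T,F)
	\le C \sum_{\substack{F'\in I(T',n)\\F'(u)=i}} P(T',F')
	\le C\cdot C^{m-1} = C^m
\end{equation*}
by the induction hypothesis applied to $T'$ and the root $u$. This completes the induction. The only point requiring a little care — and the one I'd flag as the main obstacle, though it is minor — is making the leaf-removal bookkeeping precise: one must check that the chosen leaf $\ell$ is genuinely distinct from $u$ so that $u\in V(T')$ and the induction hypothesis applies with the same root, and that the extension-by-one-vertex correspondence between $I(T,n)$ and (injections $F'$, unused label $j$) is a bijection compatible with the product $P$. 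Everything else is a direct application of the row-sum bound \eqref{eq:abs_rowbdd}.
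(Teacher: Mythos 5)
Your proof is correct and follows essentially the same route as the paper: induction on the number of edges, removing a leaf distinct from $u$, bounding the sum over the removed vertex's label by the row-sum condition \eqref{eq:abs_rowbdd} to gain a factor of $C$, and applying the induction hypothesis to the smaller tree. The bookkeeping points you flag (leaf distinct from $u$, the extension correspondence, dropping the injectivity constraint to get an upper bound) are exactly the ones the paper handles implicitly, and your treatment of them is fine.
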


\begin{proof}
	If $m = 0$, then \eqref{eq:tree_bdd} obviously holds.
	(We define the product of zero terms as $1$.)
	To proceed by induction, assume that \eqref{eq:tree_bdd} holds for $m$,
	and let $T$ be a tree with $m+1$ edges.
	Choose any leaf $w$ of $T$ different from $u$,
	and let $x$ be the only vertex of $T$ adjacent to $w$.
	Since
	\begin{equation*}
		\begin{split}
			\sum_{\substack{F \in I(T,n)\\F(u)=i, F(x)=j}} P(T,F)
			&\le \sum_{\substack{H \in I(T\setminus w,n)\\H(u)=i, H(x)=j}}
			\Bigl(P(T\setminus w, H) \sum_{l=1}^n
			\Var \bigl[w_{jl}^{(n)}\bigr]\Bigr)\\
			&\le C\sum_{\substack{H \in I(T\setminus w,n)\\H(u)=i, H(x)=j}}
			P(T\setminus w, H)
		\end{split}
	\end{equation*}
	for all $j \in \{1,\ldots,n\}$, we have
	\begin{equation*}
		\begin{split}
			\sum_{\substack{F \in I(T,n)\\F(u)=i}} P(T,F)
		&= \sum_{j=1}^n
		\sum_{\substack{F \in I(T,n)\\F(u)=i, F(x)=j}} P(T,F) \\
		&\le C \sum_{j=1}^n
		\sum_{\substack{H \in I(T\setminus w,n)\\H(u)=i, H(x)=j}}
		P(T\setminus w, H) \\
		&= C \sum_{\substack{H \in I(T\setminus w,n)\\H(u)=i}}
		P(T\setminus w, H) \le C^{m+1}
		\end{split}
	\end{equation*}
	by the induction hypothesis.
\end{proof}

\begin{lemma}
	\label{lem:tree_cont}
	For any finite tree $T$,
	\begin{equation}
		\label{eq:tree_cont}
		\lim_{n\to\infty} \frac{1}{n}
		\sum_{F \in I(T,n)} P(T,F) = 1.
	\end{equation}
\end{lemma}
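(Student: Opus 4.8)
I will prove \eqref{eq:tree_cont} by induction on the number of edges $m$ of $T$, using Lemma~\ref{lem:tree_bdd} together with condition \eqref{eq:sclaw_var}. Throughout I use the reduced setting of Section~\ref{sec:prelim_reductions}: \eqref{eq:abs_rowbdd} holds for some finite $C\ge 0$, and there are $\eta_n\to 0$ with $|\wn|\le\eta_n$ for all $i,j$, so in particular $\Varwn\le\eta_n^2$. If $m=0$, then $T$ is a single vertex, $P(T,F)=1$ for every $F$ (empty product), and $|I(T,n)|=n$, so $\frac1n\sum_{F\in I(T,n)}P(T,F)=1$ for every $n$.

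For the inductive step, let $T$ have $m+1\ge 1$ edges, hence $m+2$ vertices; pick a leaf $w$ of $T$, let $x$ be its unique neighbour, and set $T':=T\setminus w$, a tree with $m$ edges and $m+1$ vertices. First I would split each $F\in I(T,n)$ into its restriction $H:=F|_{V(T')}\in I(T',n)$ and the value $l:=F(w)\in\{1,\dots,n\}\setminus H(V(T'))$, obtaining
\[
  \frac1n\sum_{F\in I(T,n)}P(T,F)=\frac1n\sum_{H\in I(T',n)}P(T',H)\,\sigma_n(H),
  \qquad
  \sigma_n(H):=\sum_{l\notin H(V(T'))}\Var\bigl[w^{(n)}_{H(x),l}\bigr].
\]
The key point is that $\sigma_n(H)$ is close to $1$ for every $H$: since $|H(V(T'))|=m+1$ and each $\Varwn\le\eta_n^2$,
\[
  \bigl|\sigma_n(H)-1\bigr|
  \le\Bigl|\sum_{l=1}^n\Var\bigl[w^{(n)}_{H(x),l}\bigr]-1\Bigr|+(m+1)\eta_n^2.
\]
Thus $\frac1n\sum_{F\in I(T,n)}P(T,F)=\frac1n\sum_{H\in I(T',n)}P(T',H)+E_n$ with $|E_n|\le\frac1n\sum_{H\in I(T',n)}P(T',H)\,|\sigma_n(H)-1|$, and the main term converges to $1$ by the induction hypothesis applied to $T'$.

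It remains to show $E_n\to 0$. Here I would group the injections $H\in I(T',n)$ by the value $i:=H(x)$ and apply Lemma~\ref{lem:tree_bdd} to $T'$ with distinguished vertex $x$, which gives $\sum_{H\in I(T',n),\,H(x)=i}P(T',H)\le C^m$ for every $i$; combined with the bound on $|\sigma_n(H)-1|$ above this yields
\[
  |E_n|\le C^m\Bigl(\frac1n\sum_{i=1}^n\Bigl|\sum_{l=1}^n\Var\bigl[w^{(n)}_{il}\bigr]-1\Bigr|+(m+1)\eta_n^2\Bigr).
\]
The first summand tends to $0$ by \eqref{eq:sclaw_var} (note $\sum_{l=1}^n\bigl(\Var[w^{(n)}_{il}]-\tfrac1n\bigr)=\sum_{l=1}^n\Var[w^{(n)}_{il}]-1$) and the second tends to $0$ since $\eta_n\to 0$; hence $E_n\to 0$, completing the induction. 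The main obstacle is that the inner sum $\sum_l\Var[w^{(n)}_{H(x),l}]$ depends on $H$ through the value $H(x)$, so \eqref{eq:sclaw_var} — which controls this quantity only on average over the row index — cannot be applied directly; organizing the sum over $H$ by the value of $H(x)$ and using the uniform bound of Lemma~\ref{lem:tree_bdd} on the $P$-weighted number of injections sending $x$ to a prescribed index is exactly what makes the averaging go through. The small correction from the ``diagonal collisions'' $l\in H(V(T'))$ (where $w$ cannot be sent) is absorbed via $\Varwn\le\eta_n^2$.
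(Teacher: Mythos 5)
Your proof is correct and follows essentially the same route as the paper's: induction on the number of edges, removal of a leaf, using Lemma~\ref{lem:tree_bdd} to get a uniform bound on the $P$-weighted count of injections sending the leaf's neighbour to a prescribed index, and then invoking \eqref{eq:sclaw_var} together with the $\eta_n^2$ bound for the diagonal collisions. The only cosmetic difference is that you bundle both error contributions into a single quantity $\sigma_n(H)-1$, whereas the paper estimates the two discrepancies (the collision terms and the deviation of $\sum_l\Var[w^{(n)}_{H(x),l}]$ from $1$) in two separate displayed inequalities.
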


\begin{proof}
	Let $m := E(T)$.
	If $m = 0$, then \eqref{eq:tree_cont} obviously holds.
	To proceed by induction, assume that the result holds for trees with
	$m$ edges, and let $T$ be a tree with $m+1$ edges.
	Let $u \in V(T)$ be a leaf of $T$, and $w$ be the only vertex of $T$
	adjacent to $u$ in $T$.
	Note that
	\begin{multline}
		\label{eq:tree_cont_free}
		\Biggl| \frac{1}{n} \sum_{F \in I(T,n)} P(T,F)
		- \frac{1}{n} \sum_{H \in I(T\setminus u, n)} \biggl( P(T\setminus u, H)
		\sum_{i=1}^n \Var\bigl[w_{H(w)i}^{(n)}\bigr] \biggr) \Biggr| \\
		\le \frac{1}{n} \sum_{H \in I(T\setminus u, n)}
		\biggl( P(T\setminus u, H) \sum_{v \in V(T\setminus u)}
		\Var\bigl[w_{H(w)H(v)}^{(n)}\bigr] \biggr) \\
		\le \frac{(m+1)\eta_n^2}{n} \sum_{H \in I(T\setminus u,n)}
		P(T\setminus u, H) \to 0 \qquad \text{as $n \to \infty$,}
	\end{multline}
	by the induction hypothesis.
	By Lemma \ref{lem:tree_bdd}, we have
	\begin{multline}
		\label{eq:tree_cont_bdd}
		\biggl| \frac{1}{n} \sum_{H \in I(T\setminus u, n)}
		\biggl( P(T\setminus u, H)
		\sum_{i=1}^n \biggl( \Var\bigl[w_{H(w)i}^{(n)}\bigr] - \frac{1}{n}
		\biggr) \biggr) \biggr|\\
		= \frac{1}{n} \biggl| \sum_{j=1}^n \biggl[ \sum_{i=1}^n
		\biggl(\Var\bigl[w_{ji}^{(n)} \bigr]
		- \frac{1}{n} \biggr) \cdot
		\sum_{\substack{H \in I(T\setminus u,n)\\H(w)=j}}
		P(T\setminus u, H) \biggr] \biggr|\\
		\le \frac{C^m}{n} \sum_{j=1}^n \biggl| \sum_{i=1}^n \biggl(
		\Var\bigl[w_{ij}^{(n)}\bigr] - \frac{1}{n} \biggr) \biggr|
		\to 0 \qquad \text{as $n \to \infty$.}
	\end{multline}
	Combining \eqref{eq:tree_cont_free}, \eqref{eq:tree_cont_bdd},
	and the fact that
	\[ \lim_{n\to\infty} \frac{1}{n} \sum_{H \in I(T\setminus u,n)}
	P(T\setminus u, H) = 1, \]
	we can conclude that \eqref{eq:tree_cont} holds.
\end{proof}

\section{Computation of moments}
\label{sec:comp_moments}
Fix a $k \in \N$.
Let us call any $n$-tuple $(i_0,\ldots,i_k)$ with $i_0 = i_k$
a \emph{closed walk of length $k$}.
If $\bi = (i_0,\ldots,i_k)$ is a closed walk, we let $G(\bi)$ be the graph
(possibly having loops but having no multiple edges) with the vertex set
$V(\bi) := \{i_0,\ldots,i_k\}$ and the edge set
\begin{equation*}
	E(\bi) := \bigl\{\, \{i_{t-1},i_t\} \bigm| t = 1,\ldots,k \,\bigr\}.
\end{equation*}
Two closed walks $\bi = (i_0,\ldots,i_k)$ and $\bj = (j_0,\ldots,j_k)$
are said to be \emph{isomorphic} if for any $s,t = 0,\ldots,k$ we have
$i_s = i_t$ if and only if $j_s = j_t$.
If $t \in \N$, then a \emph{canonical closed walk of length $k$ on $t$ vertices}
is a closed walk $\bc = (c_0,\ldots,c_k)$ with $V(\bc)=\{1,\ldots,t\}$ such that
\begin{enumerate}
	\item
	$c_0 = c_k = 1$ and
	\item
	$c_t \le \max\{c_0,\ldots,c_{t-1}\}+1$ for each $t = 1,\ldots,k$.
\end{enumerate}
Let $\Gamma(k,t)$ denote the set of such walks.
It is straightforward to show that any closed walk is isomorphic to exactly one
canonical closed walk.
For any $\bc \in \Gamma(k,t)$, let $L(n,\bc)$ denote the set of all closed walks
$(i_0,\ldots,i_k)$ with $i_0,\ldots,i_k \in \{1,\ldots,n\}$
which are isomorphic to $\bc$.

Note that
\begin{equation}
	\label{eq:trace_sum}
	\begin{split}
		\frac{1}{n} \E \tr W_n^k &= \frac{1}{n} \sum_{i_0,\ldots,i_k = 1}^n
		\E\biggl[ \prod_{s=1}^k w_{i_{s-1}i_s}^{(n)} \biggr] \\
		&= \sum_{t=1}^{k+1} \sum_{\bc \in \Gamma(k,t)}
		\sum_{(i_0,\ldots,i_k) \in L(n,\bc)}
		\E\biggl[ \prod_{s=1}^k w_{i_{s-1}i_s}^{(n)} \biggr].
	\end{split}
\end{equation}
Here the upper bound of $t$ is (rather arbitrarily) set to $k+1$
since $\Gamma(k,t)$ is empty for any $t > k+1$.
We will compute
\[ \sum_{(i_0,\ldots,i_k) \in L(n,\bc)}
\E\biggl[ \prod_{s=1}^k w_{i_{s-1}i_s}^{(n)} \biggr] \]
for each $t \in \N$ and $\bc \in \Gamma(k,t)$.

\begin{lemma}
	\label{lem:no_walk_once}
	Let $t \in \N$ and $\bc = (c_0,\ldots,c_k) \in \Gamma(k,t)$.
	If $\bc$ walks on some edge $\{i,j\}$ exactly once,
	i.e. $\{c_{s-1},c_s\} = \{i,j\}$ for exactly one $s \in \{1,\ldots,k\}$,
	then
	\[ \E \biggl[ \prod_{s=1}^k w_{i_{s-1}i_s}^{(n)} \biggr] = 0 \]
	for any $n \in \N$ and $(i_0,\ldots,i_k) \in L(n,\bc)$.
\end{lemma}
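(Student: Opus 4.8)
The plan is to exploit independence of the upper-triangular entries together with the mean-zero assumption. Fix $n$ and a closed walk $(i_0,\ldots,i_k) \in L(n,\bc)$. Group the factors of the product $\prod_{s=1}^k w_{i_{s-1}i_s}^{(n)}$ by the \emph{unordered} pair $\{i_{s-1},i_s\}$, i.e.\ by the edges of $G(\bc)$. Since $W_n$ is Hermitian, $w_{i_{s-1}i_s}^{(n)}$ is either $w_{ab}^{(n)}$ or its conjugate $\overline{w_{ab}^{(n)}}$ where $\{a,b\}$ is the corresponding edge, so all factors attached to a given edge are functions of a single random variable $w_{ab}^{(n)}$, and factors attached to distinct edges are independent because the walks $\bi$ and $\bc$ are isomorphic (so distinct edges of the walk correspond to distinct unordered index pairs, hence to distinct, jointly independent upper-triangular entries). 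Therefore the expectation factorizes over the edges of $G(\bc)$:
\[
\E\biggl[ \prod_{s=1}^k w_{i_{s-1}i_s}^{(n)} \biggr]
= \prod_{e \in E(\bc)} \E\bigl[ (\text{product of factors along } e) \bigr].
\]

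Now I would isolate the edge $\{i,j\}$ that is used exactly once. Along that edge the ``product of factors'' is a single factor, namely $w_{ab}^{(n)}$ or $\overline{w_{ab}^{(n)}}$ for the corresponding pair $\{a,b\}$; in either case its expectation is $\E\bigl[w_{ab}^{(n)}\bigr]$ or $\overline{\E\bigl[w_{ab}^{(n)}\bigr]}$, which is $0$ by the mean-zero hypothesis. Hence one factor in the product over $E(\bc)$ vanishes, and so the whole product vanishes. (All the other edge-expectations are finite because the truncated entries are bounded, so there is no issue multiplying a zero factor by the rest.)

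The only step requiring care is the factorization itself: I need to check that the map $s \mapsto \{c_{s-1},c_s\}$ really does send steps lying over different edges of $G(\bc)$ to genuinely distinct unordered index pairs in $\{1,\ldots,n\}$, so that the corresponding entries of $W_n$ are among the jointly independent upper-triangular entries. This is exactly where the isomorphism condition $\bi \cong \bc$ is used: $i_s = i_t \iff c_s = c_t$ guarantees that $\{i_{s-1},i_s\} = \{i_{t-1},i_t\}$ precisely when $\{c_{s-1},c_s\} = \{c_{t-1},c_t\}$, i.e.\ precisely when the two steps lie over the same edge of $G(\bc)$. I would spell this out in a sentence or two. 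Everything else — grouping a finite product, pulling a conjugate through an expectation, using $\E[w]=0$ — is routine. So the main (mild) obstacle is just the bookkeeping that turns ``independence of the upper-triangular entries'' into ``independence of the per-edge factor products.''
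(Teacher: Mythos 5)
Your proposal is correct and uses essentially the same argument as the paper: joint independence of the upper-triangular entries lets you separate the single factor coming from the edge walked exactly once (which is $w_{ab}^{(n)}$ or its conjugate) from the rest, and the mean-zero hypothesis kills the expectation. The only cosmetic difference is that you factorize the expectation over \emph{all} edges of $G(\bc)$, whereas the paper merely splits off the once-walked factor from an independent remainder; both hinge on the same bookkeeping point you flag, namely that the isomorphism with $\bc$ (injectivity of $F_{\bi}$) makes distinct edges correspond to distinct independent entries.
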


\begin{proof}
	Since the upper triangular entries of $W_n$ are jointly independent,
	$\prod_{s=1}^k w_{i_{s-1}i_s}^{(n)}$ can be broken into
	$\wn$ or $w_{ji}^{(n)}$, and a random variable independent from $w_{ij}$.
	Since $\E \wn = 0$, the desired conclusion follows.
\end{proof}

\begin{lemma}
	\label{lem:no_mult_cycle}
	Let $t \in \N$ and $\bc = (c_0,\ldots,c_k) \in \Gamma(k,t)$.
	Assume that $\bc$ doesn't walk on any edge exactly once,
	i.e. for each $s = 1,\ldots,k$ there is a $r \in \{1,\ldots,k\}$
	with $r \ne s$ such that $\{c_{s-1},c_s\} = \{c_{r-1},c_r\}$.
	Then we have $t \le k/2+1$, and the following hold.
	\begin{enumerate}
		\item
		If $t < k/2 + 1$, then
		\begin{equation}
			\label{eq:iso_sum_zero}
			\on{ \sum_{(i_0,\ldots,i_k) \in L(n,\bc)} \E \biggl[
			\prod_{s=1}^k w_{i_{s-1}i_s}^{(n)} \biggr] }.
		\end{equation}
		\item
		If $t = k/2 + 1$, then
		\begin{equation}
			\label{eq:iso_sum_one}
			\lim_{n\to\infty} \frac{1}{n}
			\sum_{(i_0,\ldots,i_k) \in L(n,\bc)} \E \biggl[
			\prod_{s=1}^k w_{i_{s-1}i_s}^{(n)} \biggr] = 1.
		\end{equation}
	\end{enumerate}
\end{lemma}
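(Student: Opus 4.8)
The plan is to analyze the combinatorics of the canonical closed walk $\bc$ under the hypothesis that every edge is traversed at least twice. First I would observe that, since $\bc$ is a closed walk on the connected graph $G(\bc)$ with $t$ vertices, the graph $G(\bc)$ has at least $t-1$ edges; and since the walk has length $k$ and every edge of $G(\bc)$ is used at least twice, we get $k \ge 2\,|E(G(\bc))| \ge 2(t-1)$, i.e. $t \le k/2 + 1$. This establishes the first assertion and already separates the two cases by whether the inequalities are tight. Equality $t = k/2+1$ forces $|E(G(\bc))| = t-1$ (so $G(\bc)$ is a \emph{tree}) and forces every edge to be traversed \emph{exactly} twice; conversely $t < k/2+1$ means either $G(\bc)$ has a cycle, or some edge is traversed at least three times (or both), so the walk visits strictly fewer than $t-1$ \emph{distinct} edges when counted once each — more precisely, the number of distinct edges is at most $k/2$ and strictly less than $t$ is impossible, so the useful bookkeeping is: distinct edges $\le \lfloor k/2\rfloor$, hence $\le k/2$, and $t-1 \le$ distinct edges, with the product $P$ over distinct edges.

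Next I would pass from the expectation to the variance product. For a fixed walk $(i_0,\dots,i_k) \in L(n,\bc)$, by joint independence of the upper-triangular entries and the fact that each entry is traversed an even number of times is \emph{not} automatic — only ``at least twice'' — so I cannot directly replace $\E\bigl[\prod_s w_{i_{s-1}i_s}^{(n)}\bigr]$ by $\prod_{e} \Var[\cdot]$. Instead I would bound its modulus: grouping the factors by the distinct edges of $G(\bc)$, independence gives that the expectation factors as a product over edges $e$ of $\E\bigl[(w^{(n)}_{\cdot})^{a_e}(\overline{w^{(n)}_{\cdot}})^{b_e}\bigr]$ where $a_e + b_e = m_e \ge 2$ is the multiplicity of $e$; using $|\wn| \le \eta_n$ we get $\bigl|\E[\cdots]\bigr| \le \eta_n^{m_e - 2}\,\Var[\wn]$ for each edge (since $|w|^{m_e} \le \eta_n^{m_e-2}|w|^2$), so
\[
\biggl|\E\biggl[\prod_{s=1}^k w_{i_{s-1}i_s}^{(n)}\biggr]\biggr|
\le \eta_n^{\,k - 2|E(G(\bc))|}\, P\bigl(G(\bc), F\bigr),
\]
where $F$ is the injection $V(\bc)\to\{1,\dots,n\}$ induced by $(i_0,\dots,i_k)$. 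Summing over $L(n,\bc)$, which is the same as summing $F$ over $I(G(\bc),n)$ with the normalization $F(c_0)=i_0$ free over $n$ choices, and applying Lemma~\ref{lem:tree_bdd} to a spanning tree of $G(\bc)$ (discarding the non-tree edges, whose variances are $\le \eta_n^2$), I would get
\[
\frac{1}{n}\sum_{(i_0,\dots,i_k)\in L(n,\bc)}\biggl|\E\biggl[\prod_{s=1}^k w^{(n)}_{i_{s-1}i_s}\biggr]\biggr|
\le \eta_n^{\,k-2|E(G(\bc))|}\cdot \eta_n^{\,2(|E(G(\bc))|-(t-1))}\cdot C^{\,t-1}
= \eta_n^{\,k - 2(t-1)}\,C^{\,t-1}.
\]
When $t < k/2+1$ the exponent $k - 2(t-1) \ge 1$ is strictly positive, so this tends to $0$ since $\eta_n\to 0$; this proves (i).

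For (ii), when $t = k/2+1$ the graph $G(\bc)$ is a tree, every edge is traversed exactly twice, and the exponent of $\eta_n$ above vanishes — so the crude bound only gives boundedness, and I must compute the limit exactly. Here the key point is that because $\bc$ traverses each edge of the tree exactly twice, and the tree's two traversals of an edge $\{u,v\}$ go once as $(u,v)$ and once as $(v,u)$ (a standard fact: a closed walk on a tree using each edge twice crosses each edge once in each direction), we have exactly $\E\bigl[\prod_{s} w^{(n)}_{i_{s-1}i_s}\bigr] = \prod_{e\in E(G(\bc))} \E\bigl[|w^{(n)}_{F(e)}|^2\bigr] = \prod_e \Var[w^{(n)}_{F(e)}] = P(G(\bc),F)$, using $\E\wn = 0$ and Hermitianity. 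Therefore
\[
\frac{1}{n}\sum_{(i_0,\dots,i_k)\in L(n,\bc)}\E\biggl[\prod_{s=1}^k w^{(n)}_{i_{s-1}i_s}\biggr]
= \frac{1}{n}\sum_{F\in I(G(\bc),n)} P(G(\bc),F),
\]
where I am using that the map from $L(n,\bc)$ to injections $F$ is a bijection (each isomorphism class of closed walk corresponds to a unique labelling pattern, so choosing the images of the $t$ vertices determines the walk). By Lemma~\ref{lem:tree_cont} applied to the tree $G(\bc)$, this limit is $1$, which is exactly \eqref{eq:iso_sum_one}.

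The main obstacle I anticipate is the careful handling in case (ii) of why the two traversals of each tree edge have opposite orientations, ensuring the product of expectations is a genuine product of variances rather than of quantities like $\E[(\wn)^2]$ (which could have any modulus up to the variance and is not what we want); this is a purely graph-theoretic lemma about closed walks on trees, but it must be stated and used precisely. A secondary point requiring care is the bijection between $L(n,\bc)$ and the relevant set of injections, and the bookkeeping of the free vertex $c_0 = c_k = 1$ contributing the factor of $n$ that cancels the $\tfrac1n$ — but this is routine once the setup of Section~\ref{sec:comp_moments} is in hand.
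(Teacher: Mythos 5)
Your proposal is correct and follows essentially the same route as the paper: the same edge-count argument gives $t\le k/2+1$, case (i) is handled by the bound $\eta_n^{\,k-2(t-1)}P(S,F)$ over a spanning tree, and case (ii) uses that $G(\bc)$ is a tree whose edges are each traversed once in each direction, so the expectation is exactly $P(G(\bc),F)$ and Lemma~\ref{lem:tree_cont} gives the limit $1$. The only (immaterial) difference is that in case (i) you control the sum via Lemma~\ref{lem:tree_bdd}, where the paper cites Lemma~\ref{lem:tree_cont}; also note the prose slip claiming ``strictly fewer than $t-1$ distinct edges,'' which contradicts connectivity but is not used in your actual estimates.
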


\begin{proof}
	As each edge of $G(\bc)$ is walked on at least twice by $\bc$,
	the graph $G(\bc)$ has at most $k/2$ edges.
	Since $G(\bc)$ is a connected graph with $t$ vertices, we have
	$t \le k/2+1$, and $G(\bc)$ has a spanning tree $S$ with $t-1$ edges.
	If $\bi = (i_0,\ldots,i_k) \in L(n,\bc)$, then there is an injection
	$F_\bi\colon\{1,\ldots,t\} \to \{1,\ldots,n\}$ with
	$i_s = F_\bi(c_s)$ for all $s = 0,\ldots,k$.
	
	(i) Assume $t < k/2+1$.
	Using the bound $|\wn| \le \eta_n$, and the fact that $\bc$
	walks on any edge of $G(\bc)$ at least twice, we can derive
	\[ \E \biggl[\prod_{s=1}^k \bigl|w_{i_{s-1}i_s}^{(n)}\bigr|\biggr]
	\le \eta_n^{k-2(t-1)} P(S, F_\bi). \]
	Note that $\lim_{n\to\infty} \eta_n^{k-2(t-1)} = 0$ since $t < k/2+1$.
	Since the map $L(n,\bc) \to I(S,n)$ given by
	$\bi \mapsto F_\bi$ is a bijection, we have
	\begin{multline*}
		\frac{1}{n} \sum_{(i_0,\ldots,i_k) \in L(n,\bc)} \E \biggl|
		\prod_{s=1}^k w_{i_{s-1}i_s}^{(n)} \biggr|
		\le \frac{\eta_n^{k-2(t-1)}}{n} \sum_{\bi \in L(n,\bc)} P(S, F_\bi) \\
		= \frac{\eta_n^{k-2(t-1)}}{n} \sum_{F \in I(S,n)} P(S,F)
		\to 0 \qquad \text{as $n \to \infty$}
	\end{multline*}
	by Lemma \ref{lem:tree_cont}.
	
	(ii) Assume $t = k/2+1$.
	Since $S$ has $k/2$ edges and each edge of $S$ is walked on twice by $\bc$,
	we see that $S = G(\bc)$.
	As each edge of $G(\bc)$ is traversed once in each direction,
	i.e. for each $s = 1,\ldots,k$ there is an $r \in \{1,\ldots,k\}$
	with $r \ne s$ such that $c_{s-1} = c_r$ and $c_s = c_{r-1}$, we have
	\begin{multline*}
		\frac{1}{n} \sum_{(i_0,\ldots,i_k) \in L(n,\bc)} \E \biggl[
		\prod_{s=1}^k w_{i_{s-1}i_s}^{(n)} \biggr]
		= \frac{1}{n} \sum_{\bi \in L(n,\bc)} P(S,F_\bi) \\
		= \frac{1}{n} \sum_{F \in I(S,n)} P(S,F) \to 1
		\qquad \text{as $n \to \infty$}
	\end{multline*}
	by Lemma \ref{lem:tree_cont}.
\end{proof}

\begin{proof}[Proof of \eqref{eq:odd_moment_conv} and
\eqref{eq:even_moment_conv}]
	Lemma \ref{lem:no_walk_once} and \ref{lem:no_mult_cycle} tell us that
	we have \eqref{eq:iso_sum_one} if and only if $\bc$ doesn't walk on
	any edge exactly once and $t = k/2+1$.
	Otherwise, we have \eqref{eq:iso_sum_zero}.
	If $k$ is odd, then $k/2+1$ is not an integer, and so we cannot have
	$t = k/2+1$.
	So, for any odd $k \in \N$, we have
	\[ \on{ \E \tr W_n^k } \]
	by \eqref{eq:trace_sum}.
	
	Assume that $k$ is even.
	Let $U$ be the set of all $\bc \in \Gamma(k,k/2+1)$
	which traverses each edge of $G(\bc)$ twice.
	Then by Lemma \ref{lem:no_mult_cycle} (ii) and \eqref{eq:trace_sum}, we have
	\[ \lim_{n\to\infty} \frac{1}{n} \E \tr W_n^k = |U|. \]
	
	A \emph{Dyck path} of length $k$ is a finite sequence $(x_0,\ldots,x_k)$
	satisfying the following:
	\begin{enumerate}
		\item
		$x_0 = x_k = 0$;
		\item
		$x_s \ge 0$ for all $s=0,\ldots,k$;
		\item
		$|x_s - x_{s-1}| = 1$ for all $s = 1,\ldots,k$.
	\end{enumerate}
	Given an $\bc = (c_0,\ldots,c_k) \in U$, let $D(\bc) := (x_0,\ldots,x_k)$
	where $x_s$ is the distance between $1$ ($=c_0$) and $c_s$ in $G(\bc)$.
	Then it is clear that $D(\bc)$ is indeed a Dyck path, and it is
	not difficult to see that $D$ is a bijection from $U$ to the set of
	all Dyck paths of length $k$.
	It is well-known that there are exactly $\frac{1}{k/2+1}\binom{k}{k/2}$
	Dyck paths of length $k$; see \cite[Example 14.8]{vLW01}.
	Thus, we indeed have
	\[ \lim_{n\to\infty} \frac{1}{n} \E \tr W_n^k
	= \frac{1}{k/2+1} \binom{k}{k/2}. \]
	This finishes the proof of the semicircular law Theorem \ref{thm:sclaw}.
\end{proof}

\section{Gaussian convergence}
\label{sec:gaussian}
The paper \cite{Jun18} considers real symmetric random matrices
$W_1, W_2, \ldots$ with size $1 \times 1$, $2 \times 2$, \ldots
whose upper triangular entries are i.i.d.
In that paper, it is shown that if the sum of a row of $W_n$ converges
in distribution to the standard normal distribution $N(0,1)$ as $n \to \infty$,
then $\esdn \tto \scdist$ as $n \to \infty$ a.s.
We prove this fact generalized to random matrices with non-i.i.d.\ entries
in this section.
By doing so, we will demonstrate how one can apply Theorem \ref{thm:sclaw}
to obtain a semicircular law for random matrices whose entries might
have infinite variances.

The type of convergence described in the following fact will appear many times
in this section.

\begin{proposition}[Uniform convergence of triangular arrays]
	\label{prop:unif_array}
	Let $S$ be a topological space, $m_1,m_2,\ldots \in \N$,
	and $(s_{ni})_{i=1}^{m_n}$ be a finite sequence in $S$ for each $n \in \N$.
	For any $s \in S$, the following two conditions are equivalent:
	\begin{enumerate}
		\item
		$s_{ni_n} \to s$ as $n \to \infty$ for any choice of
		$i_n \in \{1,\ldots,m_n\}$ for each $n \in \N$;
		\item
		for any neighborhood $N$ of $s$, there exists some $n_0 \in \N$
		such that $s_{ni} \in N$ for all $n \ge n_0$ and $i = 1,\ldots,m_n$.
	\end{enumerate}
\end{proposition}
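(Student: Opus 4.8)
The plan is to prove the two implications separately. The implication (ii)~$\Rightarrow$~(i) is essentially immediate from the definitions, while (i)~$\Rightarrow$~(ii) I would handle by contraposition. Before starting I would record the basic fact that, in an arbitrary topological space, a sequence $(x_n)$ converges to $s$ exactly when for every neighborhood $N$ of $s$ the set $\{n\in\N : x_n\notin N\}$ is finite; equivalently, $(x_n)$ \emph{fails} to converge to $s$ iff there is a neighborhood $N$ of $s$ with $x_n\notin N$ for infinitely many $n$. This is the only ``definition unpacking'' needed, and it is where one must be slightly careful since $S$ is not assumed metrizable (so there is no sequential characterization of closure, etc.).

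For (ii)~$\Rightarrow$~(i): fix any choice $i_n\in\{1,\ldots,m_n\}$ and any neighborhood $N$ of $s$. By (ii) there is $n_0$ with $s_{ni}\in N$ for all $n\ge n_0$ and all $i\in\{1,\ldots,m_n\}$; in particular $s_{ni_n}\in N$ for all $n\ge n_0$. Since $N$ was arbitrary, $s_{ni_n}\to s$.

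For (i)~$\Rightarrow$~(ii), I would prove the contrapositive. Suppose (ii) fails: there is a neighborhood $N$ of $s$ such that the set $B:=\{n\in\N : s_{ni}\notin N \text{ for some } i\in\{1,\ldots,m_n\}\}$ is infinite. Now construct a choice $(i_n)$ as follows: for $n\in B$ pick any $i_n$ with $s_{ni_n}\notin N$, and for $n\notin B$ set $i_n:=1$ (any fixed value in $\{1,\ldots,m_n\}$ will do). Then $s_{ni_n}\notin N$ for every $n\in B$, an infinite set, so by the fact recorded above $(s_{ni_n})$ does not converge to $s$; this contradicts (i). The only mildly delicate point is to make sure a legitimate choice $i_n$ is defined for \emph{every} $n$, which is why the index set is split into $B$ and its complement; beyond that there is no genuine obstacle.
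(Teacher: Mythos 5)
Your proof is correct: the (ii)$\Rightarrow$(i) direction is immediate, and your contrapositive argument for (i)$\Rightarrow$(ii), choosing a witness index $i_n$ with $s_{ni_n}\notin N$ for each $n$ in the infinite bad set and a default index elsewhere, is exactly the standard argument. The paper simply omits the proof as ``straightforward,'' and what you wrote is the straightforward proof it has in mind, with appropriate care taken that convergence in a general topological space means eventual membership in every neighborhood.
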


\begin{proof}
	We omit the straightforward proof.
\end{proof}

The following is the main result of this section.

\begin{theorem}[Gaussian convergence semicircular law]
	\label{thm:gauss_sclaw}
	For each $n \in \N$, let $W_n = (\wn)_{i,j=1}^n$ be a random
	$n \times n$ real symmetric matrix whose upper triangular entries
	are jointly independent and have symmetric distributions.
	We assume that $W_1, W_2, \ldots$ are defined on the same
	probability space.
	Assume that $(W_n)_{n \in \N}$ is a null array in the sense that
	$w_{i_nj_n}^{(n)} \tto 0$ as $n \to \infty$ for any choice of
	$i_n,j_n \in \{1,\ldots,n\}$ for each $n \in \N$.
	If also $\sum_{j=1}^n w_{i_nj}^{(n)} \tto N(0,1)$ for any choice of
	$i_n \in \{1,\ldots,n\}$ for each $n \in \N$,
	then $\esdn \tto \scdist$ as $n \to \infty$ a.s.
\end{theorem}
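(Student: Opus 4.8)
The strategy is to deduce Theorem~\ref{thm:gauss_sclaw} from Theorem~\ref{thm:sclaw} by truncating the matrix entries at a \emph{fixed} height and treating the discarded part as a low-rank perturbation. Fix any $\tau>0$ (say $\tau=1$) and put $\widetilde W_n:=\bigl(\wn\I(|\wn|\le\tau)\bigr)_{i,j=1}^n$. This is again a random real symmetric matrix, defined on the same probability space as $W_n$, whose upper triangular entries are jointly independent and bounded by $\tau$; since $\wn$ has a symmetric distribution, so does $\wn\I(|\wn|\le\tau)$, and hence these entries have mean zero and variance $\E\bigl[\,|\wn|^2\semicolon|\wn|\le\tau\,\bigr]$. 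The point of keeping $\tau$ fixed is that the truncated row-variance sums turn out to converge to \emph{exactly} $1$, so that Theorem~\ref{thm:sclaw} applies to $\widetilde W_n$ with limit $\scdist$ itself and no rescaling is needed.

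The one genuinely substantive step is to extract from the hypotheses the following two facts: for every $\eps>0$,
\[
\lim_{n\to\infty}\ \max_{1\le i\le n}\ \sum_{j=1}^n\p\bigl(|\wn|>\eps\bigr)=0,
\]
and for every $\tau>0$,
\[
\lim_{n\to\infty}\ \max_{1\le i\le n}\ \Bigl|\ \sum_{j=1}^n\E\bigl[\,|\wn|^2\semicolon|\wn|\le\tau\,\bigr]-1\ \Bigr|=0.
\]
To obtain these, fix an arbitrary choice $i_n\in\{1,\dots,n\}$ for each $n$. The null-array hypothesis makes the row $(w_{i_nj}^{(n)})_{j=1}^n$ infinitesimal, while $\sum_{j=1}^n w_{i_nj}^{(n)}\tto N(0,1)$ by assumption; the classical description of when the sums of an infinitesimal triangular array converge to a Gaussian law --- the limiting L\'evy measure must vanish --- then forces $\sum_j\p(|w_{i_nj}^{(n)}|>\eps)\to0$ for every $\eps>0$ and, using the symmetry of the entries to kill all centering terms, $\sum_j\E[\,|w_{i_nj}^{(n)}|^2\semicolon|w_{i_nj}^{(n)}|\le\tau\,]\to1$ for every $\tau>0$. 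Since $i_n$ was arbitrary, Proposition~\ref{prop:unif_array} upgrades these pointwise limits to the uniform-in-$i$ statements above. These are one-dimensional limit theorems that can be proved elementarily using characteristic functions and symmetry; carrying that out carefully is, I expect, the main obstacle, and everything after it is routine.

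Granting the two facts, I would check that $\widetilde W_n$ meets every hypothesis of Theorem~\ref{thm:sclaw}: Hermiticity, joint independence of the upper triangular entries, vanishing means and finite variances were recorded above. Set $\delta_n:=\max_{1\le i\le n}\bigl|\sum_{j=1}^n\E[\,|\wn|^2\semicolon|\wn|\le\tau\,]-1\bigr|$, so $\delta_n\to0$. Since $\sum_{j=1}^n\bigl(\Var[\wn\I(|\wn|\le\tau)]-\tfrac1n\bigr)=\sum_{j=1}^n\E[\,|\wn|^2\semicolon|\wn|\le\tau\,]-1$ has modulus at most $\delta_n$ for each $i$, we get $\tfrac1n\sum_{i=1}^n\bigl|\sum_{j=1}^n(\Var[\wn\I(|\wn|\le\tau)]-\tfrac1n)\bigr|\le\delta_n\to0$, which is \eqref{eq:sclaw_var}; condition \eqref{eq:sclaw_rowbdd} holds with $C=2$ because for large $n$ every truncated row-variance sum lies below $2$; and for $0<\eps<\tau$ the expression in \eqref{eq:sclaw_lind} for $\widetilde W_n$ equals $\tfrac1n\sum_{i,j=1}^n\E[\,|\wn|^2\semicolon\eps<|\wn|\le\tau\,]\le\tfrac{\tau^2}{n}\sum_{i,j=1}^n\p(|\wn|>\eps)\le\tau^2\max_{1\le i\le n}\sum_{j=1}^n\p(|\wn|>\eps)\to0$, while for $\eps\ge\tau$ it vanishes identically. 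Hence Theorem~\ref{thm:sclaw} gives $\mu_{\widetilde W_n}\tto\scdist$ a.s.

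It remains to compare $W_n$ with $\widetilde W_n$. Their difference has nonzero entries only in those rows of $W_n$ that contain an entry of modulus exceeding $\tau$, so $\rank(W_n-\widetilde W_n)$ is at most the number of such rows, hence at most $N_n:=\sum_{i,j=1}^n\I(|\wn|>\tau)$. The first fact above gives $\E N_n\le n\max_{1\le i\le n}\sum_{j=1}^n\p(|\wn|>\tau)=o(n)$, and as the upper triangular indicators $\I(|\wn|>\tau)$ with $i\le j$ are independent, a Chernoff bound for sums of independent Bernoulli variables gives $\p(N_n>\eps n)\le e^{-c_\eps n}$ for all large $n$ and every $\eps>0$; the Borel--Cantelli lemma then yields $N_n/n\to0$ a.s. By the rank inequality for empirical spectral distribution functions of Hermitian matrices (which follows from eigenvalue interlacing), $\infnorm{F_{W_n}-F_{\widetilde W_n}}\le\rank(W_n-\widetilde W_n)/n\le N_n/n\to0$ a.s., and together with $\mu_{\widetilde W_n}\tto\scdist$ a.s.\ this forces $\mu_{W_n}\tto\scdist$ a.s., for instance by testing against the functions $f_{p,q}$ with rational $p<q$ and invoking Theorem~\ref{thm:ch_weak_conv}.
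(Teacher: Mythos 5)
Your proposal is correct and follows essentially the same route as the paper: truncate at a fixed level, use the classical triangular-array Gaussian convergence criteria together with Proposition \ref{prop:unif_array} to get the uniform row-sum facts, verify the hypotheses of Theorem \ref{thm:sclaw} for the truncated matrix, and dispose of the difference as a low-rank perturbation via an exponential (Bernstein/Chernoff) bound, Borel--Cantelli, and Theorem \ref{thm:rank_ineq}. The one-dimensional limit theorem you flag as the main obstacle is exactly the quoted Theorem \ref{thm:gauss_conv} (Kallenberg), so no separate characteristic-function argument is needed.
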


The following two facts will be used in the proof of
Theorem \ref{thm:gauss_sclaw}.

\begin{theorem}[Gaussian convergence]
	\label{thm:gauss_conv}
	For each $n \in \N$, let $X_{n1},\ldots,X_{nn}$ be jointly independent
	real-valued random variables.
	Assume that $X_{ni_n} \tto 0$ as $n \to \infty$ regardless of
	how we choose $i_n \in \{1,\ldots,n\}$ for each $n \in \N$.
	Then $\sum_{i=1}^n X_{ni} \tto N(0,1)$ as $n \to \infty$
	if and only if the following conditions hold:
	\begin{enumerate}
		\item
		$\lim_{n\to\infty} \sum_{i=1}^n \p(|X_{ni}|>\eps) = 0$ for all $\eps>0$;
		\item
		$\lim_{n\to\infty} \sum_{i=1}^n \E[\,X_{ni}\semicolon|X_{ni}|\le1\,]
		= 0$;
		\item
		$\lim_{n\to\infty} \sum_{i=1}^n \Var[X_{ni}\I(|X_{ni}|\le1)] = 1$.
	\end{enumerate}
\end{theorem}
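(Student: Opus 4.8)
The plan is to recognize this as the classical central limit theorem for null (infinitesimal) triangular arrays, essentially the Lindeberg--Feller theorem in the form due to the work on limit theorems for sums of independent variables (Lévy, Feller, Khinchin). The statement is: given a row-wise independent array that is infinitesimal (here expressed via the uniform-null condition $X_{ni_n} \tto 0$ for every selection $i_n$, which by Proposition \ref{prop:unif_array} is equivalent to $\sup_i \p(|X_{ni}| > \eps) \to 0$ for all $\eps > 0$), the row sums converge to $N(0,1)$ iff the three conditions (i)--(iii) hold. I would structure the proof around characteristic functions and a truncation at level $1$.

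First I would introduce the truncated variables $Y_{ni} := X_{ni}\I(|X_{ni}| \le 1)$ and centered truncated variables $Z_{ni} := Y_{ni} - \E Y_{ni}$. The key reduction is that, under infinitesimality, the three conditions together are equivalent to: $\sum_i \p(X_{ni} \ne Y_{ni}) \to 0$ (this is exactly (i)), $\sum_i \E Y_{ni} \to 0$ (this is (ii)), and $\sum_i \Var Z_{ni} = \sum_i \Var Y_{ni} \to 1$ (this is (iii), since $\Var Y_{ni} = \Var(X_{ni}\I(|X_{ni}|\le 1))$). So the first direction (sufficiency of (i)--(iii)) goes: condition (i) lets me replace $\sum_i X_{ni}$ by $\sum_i Y_{ni}$ up to a term vanishing in probability; condition (ii) lets me further replace $\sum_i Y_{ni}$ by $\sum_i Z_{ni}$ up to a deterministic shift going to $0$; and then I apply the Lindeberg CLT to the bounded, centered array $(Z_{ni})$ — its Lindeberg condition for any fixed $\eps > 0$ follows because $|Z_{ni}| \le 2$ and, for $n$ large, $|Z_{ni}| \le |Y_{ni}| + |\E Y_{ni}| \le 1 + o(1)$ uniformly, so for $\eps$ fixed and $n$ large the truncated second moments $\E[Z_{ni}^2 ; |Z_{ni}| > \eps]$ are controlled by $\E[X_{ni}^2 ; |X_{ni}| > \eps/2] + (\text{uniform shift})^2$, which one pushes to zero using infinitesimality together with (iii) bounding $\sum_i \E Y_{ni}^2$. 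Combined with $\sum_i \Var Z_{ni} \to 1$, the Lindeberg CLT gives $\sum_i Z_{ni} \tto N(0,1)$, hence $\sum_i X_{ni} \tto N(0,1)$.

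For the converse (necessity), I would work with characteristic functions. Write $\varphi_{ni}(\theta) = \E e^{i\theta X_{ni}}$. Infinitesimality gives $\max_i |\varphi_{ni}(\theta) - 1| \to 0$ for each fixed $\theta$, so $\log \prod_i \varphi_{ni}(\theta) = \sum_i (\varphi_{ni}(\theta) - 1) + o(1)$ (using $|\log(1+z) - z| \le |z|^2$ for small $z$ and that $\sum_i |\varphi_{ni}(\theta)-1|$ stays bounded — which itself needs a short argument from the convergence of the product to a nonvanishing limit $e^{-\theta^2/2}$). Hence $\sum_i (\varphi_{ni}(\theta) - 1) \to -\theta^2/2$ for every $\theta$. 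Now expand $\varphi_{ni}(\theta) - 1 = \E[e^{i\theta X_{ni}} - 1 - i\theta Y_{ni}] + i\theta \E Y_{ni}$ and split the expectation over $\{|X_{ni}| \le 1\}$ and its complement; standard bounds ($|e^{ix} - 1 - ix| \le x^2/2$ and $|e^{ix}-1| \le 2$) let one read off, by integrating against a test function / taking real and imaginary parts and using that the limiting quantity $-\theta^2/2$ is real, that $\sum_i \p(|X_{ni}|>1+\delta) \to 0$, that $\sum_i \E Y_{ni} \to 0$, and that $\sum_i \E Y_{ni}^2 \to 1$; then (i) and (iii) follow after noting $\sum_i (\E Y_{ni})^2 \le \max_i |\E Y_{ni}| \cdot \sum_i |\E Y_{ni}|$ — I would need a small extra step showing $\sum_i |\E Y_{ni}|$ is bounded and $\max_i |\E Y_{ni}| \to 0$ — to pass from $\sum_i \E Y_{ni}^2 \to 1$ to $\sum_i \Var Y_{ni} \to 1$, and to upgrade truncation at $1+\delta$ to truncation at $1$ using that the array has no atoms piling up at the truncation level (again infinitesimality).

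The main obstacle I anticipate is the bookkeeping in the converse direction: controlling $\sum_i |\varphi_{ni}(\theta) - 1|$ (rather than just each term) so that the logarithm expansion is legitimate, and then cleanly extracting the three real-analytic conclusions from the single complex relation $\sum_i(\varphi_{ni}(\theta)-1) \to -\theta^2/2$ holding for all $\theta$ — in particular handling the boundary truncation level and the interplay between the "probability tail" term (i) and the "variance" term (iii). The sufficiency direction is comparatively routine once the reduction to a bounded centered array is in place and the Lindeberg CLT is invoked as a black box. Since this is a standard theorem, in the exposition I would either cite it (e.g. Feller, or \cite[§27–28]{Bil12}-style references) or relegate the full argument to the appendix, stating here only the reduction and the verification that the hypotheses match.
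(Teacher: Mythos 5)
The paper does not prove this statement at all: it is quoted as a known result with the proof given by the citation \cite[Theorem 5.15]{Kal02}, so you are attempting what the paper deliberately outsources. Your sufficiency half (truncate at $1$, recenter, apply the Lindeberg--Feller CLT to the bounded centered array) is the standard argument and is essentially correct, up to a small slip: the bound should be through the truncated variable, e.g.\ $\E\bigl[Y_{ni}^2\semicolon|Y_{ni}|>\eps/2\bigr]\le \p(|X_{ni}|>\eps/2)$ with $Y_{ni}:=X_{ni}\I(|X_{ni}|\le 1)$, not through $\E\bigl[X_{ni}^2\semicolon|X_{ni}|>\eps/2\bigr]$, which may be infinite since no moments of $X_{ni}$ are assumed.

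The necessity direction, however, has a genuine gap: the pivotal claims that $\sum_i|\varphi_{ni}(\theta)-1|$ stays bounded and that $\sum_i(\varphi_{ni}(\theta)-1)\to-\theta^2/2$ are simply false for general null arrays, and so is the subsequent step $\sum_i(\E Y_{ni})^2\to 0$. Take $X_{ni}=(-1)^i n^{-1/2}+n^{-1/2}Z_{ni}$ with $Z_{ni}$ i.i.d.\ standard normal: this array is null, $\sum_i X_{ni}\tto N(0,1)$, and (i)--(iii) hold, yet $\sum_i|\varphi_{ni}(\theta)-1|\asymp|\theta|\sqrt{n}\to\infty$, $\sum_i(\varphi_{ni}(\theta)-1)\to-\theta^2$ (not $-\theta^2/2$), $\sum_i(\E Y_{ni})^2\to 1$, and $\sum_i\E Y_{ni}^2\to 2$. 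The culprit is the imaginary part: $\Im\varphi_{ni}(\theta)\approx\theta\,\E Y_{ni}$, and $\sum_i|\E Y_{ni}|$ is not controlled by infinitesimality plus convergence of the product of characteristic functions to a nonvanishing limit (the modulus only controls real-part/symmetrized quantities), so no ``short argument'' can rescue the boundedness you need to expand the logarithm. The classical repair is to first center each variable at its truncated mean (or median) $b_{ni}$, for which $\max_i|b_{ni}|\to0$; for the centered array the imaginary parts are genuinely second order, the logarithmic expansion and the extraction of the limits become valid, and one then transfers back to the original array and recovers $\sum_i b_{ni}\to 0$ by a convergence-of-types argument. One also still has to show that $\sum_i\p(|X_{ni}|>\eps)\to0$ for \emph{every} $\eps>0$ (your sketch only reaches levels above the truncation point, and the averaging trick $\frac1T\int_0^T(1-\cos\theta x)\,d\theta$ only yields boundedness, not vanishing); this uses the uniqueness part of the L\'evy--Khinchine representation, i.e.\ that the Gaussian limit has zero L\'evy measure. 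These are exactly the ingredients of the general limit theory for null arrays, which is why the paper cites Kallenberg rather than reproving the result; as written, your converse would break at its first step.
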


\begin{proof}
	See \cite[Theorem 5.15]{Kal02}.
\end{proof}

\begin{theorem}[Bernstein's inequality]
	\label{thm:bern_ineq}
	Suppose that $X_1,\ldots,X_n$ are independent real-valued
	random variables, each with mean $0$, and each bounded by $1$.
	If $S = X_1 + \cdots + X_n$, then
	\[ \p(S \ge x)
	\le \exp \Bigl[ -\frac{x^2}{2(\E[S^2] + x)} \Bigr]
	\qquad \text{for any $x > 0$}.\]
\end{theorem}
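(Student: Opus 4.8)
The plan is a standard Chernoff (exponential-moment) argument. For any $\lambda > 0$, Markov's inequality applied to the nonnegative random variable $e^{\lambda S}$ gives
\[ \p(S \ge x) \le e^{-\lambda x}\,\E\bigl[e^{\lambda S}\bigr] = e^{-\lambda x}\prod_{i=1}^n \E\bigl[e^{\lambda X_i}\bigr], \]
the last equality by independence. So the problem reduces to controlling each factor $\E[e^{\lambda X_i}]$ in terms of $\E[X_i^2]$.

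The key step is the bound
\[ \E\bigl[e^{\lambda X_i}\bigr] \le \exp\Bigl(\E[X_i^2]\,(e^\lambda - 1 - \lambda)\Bigr) \qquad (\lambda \ge 0), \]
which uses both hypotheses on $X_i$. To prove it, expand $e^{\lambda X_i} = 1 + \lambda X_i + \sum_{k\ge2}\lambda^k X_i^k/k!$ and take expectations term by term (legitimate since $|X_i|\le1$, so $\sum_k \lambda^k|X_i|^k/k! \le e^\lambda$ is an integrable dominating function); then $\E[X_i]=0$ kills the linear term, while $|\E[X_i^k]| = |\E[X_i^{k-2}\,X_i^2]| \le \E[X_i^2]$ for $k\ge2$ because $|X_i|\le1$; finally apply $1+u\le e^u$. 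Taking the product over $i$ and setting $v := \E[S^2] = \sum_{i=1}^n\E[X_i^2]$ (the last identity by independence and $\E[X_i]=0$), we obtain
\[ \p(S \ge x) \le \exp\bigl(-\lambda x + v(e^\lambda - 1 - \lambda)\bigr) \qquad \text{for all } \lambda > 0. \]

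What remains is to pick a good $\lambda$ and do elementary estimates. If $v = 0$ then every $X_i = 0$ a.s., so $S = 0$ and the claim is trivial; assume $v > 0$. Using $k! \ge 2\cdot 3^{k-2}$ for $k\ge2$ and summing a geometric series gives $e^\lambda - 1 - \lambda \le \lambda^2/\bigl(2(1 - \lambda/3)\bigr)$ for $0\le\lambda<3$; choosing $\lambda = x/(v + x/3)$ (which lies in $(0,3)$) collapses the exponent to $-x^2/\bigl(2(v + x/3)\bigr)$, so in fact $\p(S\ge x) \le \exp\bigl(-x^2/(2(\E[S^2] + x/3))\bigr)$, and since $\E[S^2] + x/3 \le \E[S^2] + x$ this is at most the asserted bound. (Alternatively, optimizing $\lambda$ exactly yields Bennett's bound $\exp(-v\,h(x/v))$ with $h(u) = (1+u)\log(1+u) - u$, after which one checks $h(u) \ge u^2/\bigl(2(1+u)\bigr)$ for $u\ge0$ by differentiating twice.) The only step with any real content is the moment bound $\E[e^{\lambda X_i}] \le \exp(\E[X_i^2](e^\lambda - 1 - \lambda))$; the Markov step and the final optimization are routine, so I anticipate no substantial obstacle.
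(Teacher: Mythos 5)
Your proof is correct: the Chernoff bound, the moment estimate $\E[e^{\lambda X_i}] \le \exp\bigl(\E[X_i^2](e^\lambda-1-\lambda)\bigr)$ via term-by-term expectation and $|X_i|\le 1$, and the optimization $\lambda = x/(v+x/3)$ all check out, giving the sharper $\exp\bigl(-x^2/(2(\E[S^2]+x/3))\bigr)$ from which the stated bound follows. The paper gives no proof of its own beyond a pointer to Billingsley's problem M20, which uses the same exponential-moment method, so your argument is the one the paper intends.
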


\begin{proof}
	The proof of \cite[M20]{Bil99} with a slight change works.
\end{proof}

\begin{proof}[Proof of Theorem \ref{thm:gauss_sclaw}]
	By Theorem \ref{thm:gauss_conv}, we have
	\[ \lim_{n\to\infty} \sum_{j=1}^n \p(|w_{i_nj}^{(n)}| > \eps) = 0 \]
	for any choice of $i_n \in \{1,\ldots,n\}$ for each $n \in \N$,
	for any $\eps > 0$.
	Then Proposition \ref{prop:unif_array} implies
	\begin{equation}
		\label{eq:gauss_sclaw_prob_on}
		\on{ \sum_{i,j=1}^n \p(|\wn|>\eps) } \qquad \text{for all $\eps>0$.}
	\end{equation}
	
	Let $v_{ij}^{(n)} := \wn\I(|\wn|\le1)$ and
	$W'_n = (v_{ij}^{(n)})_{i,j=1}^n$.
	Since
	\begin{equation}
		\label{eq:gauss_sclaw_rank}
		\rank(W_n-W'_n) \le \sum_{i,j=1}^n \I(|\wn|>1)
		\le 2\sum_{1\le i\le j\le n} \I(|\wn|>1),
	\end{equation}
	by bounding $\sum_{1\le i\le j\le n} \I(|\wn|>1)$ from above
	we would be able to apply Theorem \ref{thm:rank_ineq}.
	For any given $\eps > 0$, we have some $n_0 \in \N$ such that
	\[ \sum_{1\le i\le j\le n} \p(|\wn|>1) \le \eps n/2
	\qquad \text{for all $n \ge n_0$} \]
	by \eqref{eq:gauss_sclaw_prob_on}.
	Since $\I(|\wn|>1)$, $1\le i\le j\le n$, are jointly independent,
	Bernstein's inequality (Theorem \ref{thm:bern_ineq}) implies
	\begin{equation*}
		\begin{split}
			\p\biggl( \sum_{1\le i\le j\le n} &\I(|\wn|>1) \ge \eps n \biggr) \\
			&\le \p\biggl( \sum_{1\le i\le j\le n}
			\bigl(\I(|\wn|>1)-\p(|\wn|>1)\bigr) \ge \eps n/2 \biggr) \\
			&\le \exp\biggl( -\frac{\eps^2n^2/4}
			{\sum_{1\le i\le j\le n}\p(|\wn|>1)+\eps n/2} \biggr) \\
			&\le \exp\biggl( -\frac{\eps^2n^2/4}{\eps n}\biggr)
			= \exp(-\eps n/4).
		\end{split}
	\end{equation*}
	As $\sum_{n=1}^\infty \exp(-\eps n/4) < \infty$ for each $\eps > 0$,
	we have
	\[ \on{ \sum_{1\le i\le j\le n} \I(|\wn|>1) } \qquad \text{a.s.,} \]
	and therefore
	\[ \on{ \rank(W_n-W'_n) } \qquad \text{a.s.} \]
	by \eqref{eq:gauss_sclaw_rank}.
	By Theorem \ref{thm:rank_ineq}, it now suffices to show
	$\mu_{W'_n} \tto \scdist$ as $n\to\infty$ a.s.
	
	We claim that $W'_n$ satisfies all conditions of Theorem \ref{thm:sclaw}.
	Since each $\wn$ is symmetric, each entry of $W'_n$ has mean zero.
	By Theorem \ref{thm:gauss_conv}, we have
	\[ \lim_{n\to\infty} \sum_{j=1}^n
	\Var\bigl[v_{i_nj}^{(n)}\bigr] = 1 \]
	for any choice of $i_n \in \{1,\ldots,n\}$ for each $n \in \N$.
	So, by using Proposition \ref{prop:unif_array}, we can see that
	the conditions \ref{eq:sclaw_var} and \ref{eq:sclaw_rowbdd}
	with $\wn$ replaced by $v_{ij}^{(n)}$ hold.
	Finally, the condition \eqref{eq:sclaw_lind} with $\wn$ replaced by
	$v_{ij}^{(n)}$ follows from
	\[ \sum_{i,j=1}^n \E\bigl[\,|v_{ij}^{(n)}|^2\semicolon
	|v_{ij}^{(n)}|>\eps\,\bigr] \le \sum_{i,j=1}^n
	\p\bigl(|v_{ij}^{(n)}|>\eps\bigr) \]
	and \eqref{eq:gauss_sclaw_prob_on}.
\end{proof}

\appendix
\section{Probability measures on $\R$}
\label{sec:prob_meas_r}
\subsection{Weak convergence}
\begin{definition}[The space $\Pr(\R)$]
	Let $\Pr(\R)$ denote the set of all Borel probability measures on $\R$.
	We equip $\Pr(\R)$ with the smallest topology that makes
	$\mu \mapsto \int_\R f\,d\mu$ continuous for all continuous bounded
	$f:\R\to\R$.
	Then we equip $\Pr(\R)$ with the Borel $\sigma$-algebra.
\end{definition}

Note that if $\mu,\mu_1,\mu_2,\ldots \in \Pr(\R)$,
then we have $\mu_n \tto \mu$ if and only if $\mu_n \to \mu$ under
the topology of $\Pr(\R)$.

\begin{definition}[L\'evy metric]
	If $F$ and $G$ are distribution functions, then the
	L\'evy distance between $F$ and $G$ is defined by
	\[L(F,G) := \inf\{\, \eps>0 \mid
	F(x-\eps)-\eps \le G(x) \le F(x+\eps)+\eps \text{ for all $x \in \R$}\,\}.\]
\end{definition}

It is not difficult to show that $L$ is indeed a metric on $\Pr(\R)$.
For any given $\mu \in \Pr(\R)$, let $F_\mu$ denote
the distribution function of $\mu$.

\begin{theorem}[Characterizations of weak convergence]
	\label{thm:ch_weak_conv}
	If $\mu$, $\mu_1$, $\mu_2$, $\ldots \in \Pr(\R)$, then
	the following are equivalent:
	\begin{enumerate}
		\item
		$\mu_n \tto \mu$;
		\item
		$\int_\R f_{p,q}\,d\mu_n \to \int_\R f_{p,q}\,d\mu$
		for all $p,q \in \Q$ with $p < q$, where $f_{p,q}\colon\R\to\R$
		is the function which has value $1$ on $(-\infty,p\,]$,
		has value $0$ on $[\,q,\infty)$, and is linear on $[\,p,q\,]$;
		\item
		$L(F_{\mu_n},F_\mu) \to 0$.
	\end{enumerate}
\end{theorem}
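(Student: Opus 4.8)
The plan is to establish the cycle (i) $\Rightarrow$ (ii) $\Rightarrow$ (iii) $\Rightarrow$ (i), with the auxiliary statement that $F_{\mu_n}(x) \to F_\mu(x)$ at every continuity point $x$ of $F_\mu$ serving as the main hinge. The implication (i) $\Rightarrow$ (ii) is immediate, since each $f_{p,q}$ is continuous and bounded, so the defining property of the topology on $\Pr(\R)$ applies directly.

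For (ii) $\Rightarrow$ (iii), I would first deduce the hinge statement. The key observation is the sandwich $\I_{(-\infty,p]} \le f_{p,q} \le \I_{(-\infty,q]}$, valid for $p < q$, which gives $F_\nu(p) \le \int_\R f_{p,q}\,d\nu \le F_\nu(q)$ for every $\nu \in \Pr(\R)$. Applying this with $\nu = \mu_n$ and $\nu = \mu$ and invoking (ii), one gets $\limsup_n F_{\mu_n}(p) \le F_\mu(q)$ and $\liminf_n F_{\mu_n}(q) \ge F_\mu(p)$ for all rationals $p < q$. Combining these with the monotonicity of distribution functions, then letting the free rational endpoint tend to $x$ and using the one-sided continuity of $F_\mu$, yields $F_\mu(x^-) \le \liminf_n F_{\mu_n}(x) \le \limsup_n F_{\mu_n}(x) \le F_\mu(x)$, hence convergence at continuity points. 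To pass from there to (iii), fix $\eps > 0$; since $F_\mu$ has at most countably many discontinuities, I can choose continuity points $x_0 < x_1 < \cdots < x_m$ of $F_\mu$ with $F_\mu(x_0) < \eps$, $F_\mu(x_m) > 1 - \eps$, and $x_{j} - x_{j-1} < \eps$ for all $j$. For $n$ large enough all the finitely many quantities $|F_{\mu_n}(x_j) - F_\mu(x_j)|$ are below $\eps$, and a routine monotonicity interpolation between consecutive grid points then bounds $L(F_{\mu_n}, F_\mu)$ by a constant multiple of $\eps$.

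For (iii) $\Rightarrow$ (i), the definition of the L\'evy metric immediately gives, for each continuity point $x$ and each $\eps > 0$, the inequalities $F_\mu(x - \eps) - \eps \le F_{\mu_n}(x) \le F_\mu(x + \eps) + \eps$ for all large $n$; letting $n \to \infty$ and then $\eps \to 0$ and using continuity of $F_\mu$ at $x$ recovers $F_{\mu_n}(x) \to F_\mu(x)$. It then remains to upgrade convergence at continuity points to $\int_\R f\,d\mu_n \to \int_\R f\,d\mu$ for arbitrary continuous bounded $f$. I would do this by (a) noting tightness of $(\mu_n)$, which follows from the control of $F_{\mu_n}$ near $\pm\infty$ afforded by $F_\mu(-\infty) = 0$ and $F_\mu(+\infty) = 1$, so that the tails of $\int_\R f\,d\mu_n$ are uniformly small; (b) reducing to $f$ with compact support and approximating such an $f$ uniformly by a function that is affine on a finite grid of continuity points of $F_\mu$; and (c) observing that the integral of such a piecewise-affine function against any $\nu \in \Pr(\R)$ is a fixed finite linear combination of the values $F_\nu$ at the grid points, which therefore converges. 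Assembling (a)--(c) gives $\mu_n \tto \mu$.

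The main obstacle is the genuinely analytic part of (iii) $\Rightarrow$ (i): turning pointwise convergence of distribution functions at continuity points into convergence of integrals against all continuous bounded test functions, where the care lies in the uniformity of the tail and approximation estimates. Everything else---the sandwich inequalities, the $\limsup$/$\liminf$ bookkeeping, and the grid estimate for the L\'evy metric---is elementary once the right inequalities are written down.
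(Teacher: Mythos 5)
Your proposal is correct, and its core ingredients are the same as the paper's: the sandwich $\I_{(-\infty,p]} \le f_{p,q} \le \I_{(-\infty,q]}$ to extract convergence of $F_{\mu_n}$ at continuity points of $F_\mu$ from (ii), the finite grid of continuity points with $F_\mu(x_0)<\eps$, $F_\mu(x_m)>1-\eps$ and mesh $<\eps$ to control the L\'evy distance, and the two-sided L\'evy bound to recover convergence at continuity points from (iii). The organizational difference is that the paper proves (i)$\Leftrightarrow$(ii) and (i)$\Leftrightarrow$(iii) separately, treating ``$F_{\mu_n}\to F_\mu$ at continuity points'' as being statement (i) itself (it takes the classical equivalence between convergence in distribution and weak-topology convergence on $\Pr(\R)$ as background), whereas you close a cycle (i)$\Rightarrow$(ii)$\Rightarrow$(iii)$\Rightarrow$(i) and additionally prove the upgrade from distribution-function convergence to convergence of $\int f\,d\mu_n$ for all bounded continuous $f$, via tightness and approximation. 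Your route is therefore more self-contained; the paper's is shorter because it outsources exactly that upgrade.

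One detail in your extra step needs repair: in (c) you assert that the integral of a piecewise-affine function against $\nu$ is a finite linear combination of the values of $F_\nu$ at the grid points. That is false as stated: integrating by parts, $\int_\R g\,d\nu = g(t_m) - \sum_j c_j \int_{t_{j-1}}^{t_j} F_\nu(t)\,dt$ with $c_j$ the slopes, so what appears is a combination of \emph{averages} of $F_\nu$ over the grid intervals, not point values (already $\int_\R f_{p,q}\,d\nu = \frac{1}{q-p}\int_p^q F_\nu(t)\,dt$). The fix is immediate, in either of two ways: replace the piecewise-affine approximant by a step function with jumps at continuity points of $F_\mu$, whose integral genuinely is $\sum_j f(t_j)\bigl(F_\nu(t_j)-F_\nu(t_{j-1})\bigr)$; or keep the affine approximant and note that $F_{\mu_n}\to F_\mu$ off the countable (hence Lebesgue-null) set of discontinuities, so the interval averages converge by bounded convergence. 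With that one-line correction the argument goes through.
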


\begin{proof}
	(i) implies (ii): Directly follows from the definition of
	convergence in distribution.
	
	(ii) implies (i):
	Assume that $\int_\R f_{p,q}\,d\mu_n \to \int_\R f_{p,q}\,d\mu$
	for all $p,q \in \Q$ with $p < q$, and let $F,F_1,F_2,\ldots$ be the
	distribution functions of $\mu,\mu_1,\mu_2,\ldots$.
	Let $x$ be any continuity point of $\mu$, and let $\eps > 0$ be given.
	Since $F$ is right continuous, we have $F(x+\delta) \le F(x) + \eps$
	for some $\delta > 0$.
	If we choose any $p,q \in \Q$ with $x < p < q < x+\delta$, then
	\begin{displaymath}
		\limsup_{n \to \infty} F_n(x) \le \lim_{n \to \infty}
		\int_\R f_{p,q} \,d\mu_n = \int_\R f_{p,q} \,d\mu \le F(x+\delta)
		\le F(x) + \eps.
	\end{displaymath}
	As $F$ is also left continuous at $x$, a similar reasoning yields
	$F(x)-\eps \le \liminf_{n \to \infty} F_n(x)$.
	Since $\eps > 0$ is arbitrary, we have $F_n(x) \to F(x)$.
	
	(i) implies (iii):
	Let $\eps > 0$ be given.
	Choose continuity points $x_0,\ldots,x_k \in \R$ of $F_\mu$ such that
	$x_0 < \cdots < x_k$, $F_\mu(x_0) \le \eps$, $F_\mu(x_k) \ge 1-\eps$, and
	\begin{displaymath}
		\max\{|x_1-x_0|,\ldots,|x_k-x_{k-1}|\} \le \eps.
	\end{displaymath}
	Let $N \in \N$ be such that $n \ge N$ implies
	\begin{displaymath}
		\max\{|F_{\mu_n}(x_0) - F_\mu(x_0)|,\ldots,
		|F_{\mu_n}(x_k)-F_\mu(x_k)|\} \le \eps.
	\end{displaymath}
	Let $x \in \R$ be arbitrarily given.
	If $x > x_k$, then
	\begin{displaymath}
		F_{\mu}(x+\eps) + \eps \ge 1 \ge F_{\mu_n}(x)
	\end{displaymath}
	for any $n \in \N$.
	If $x \in (x_{i-1},x_i]$ where $i \in \{1,\ldots,n\}$, then
	\begin{displaymath}
		F_\mu(x+\eps) + \eps \ge F_\mu(x_i) + \eps \ge F_{\mu_n}(x_i)
		\ge F_{\mu_n}(x)
	\end{displaymath}
	for any $n \ge N$.
	If $x \le x_0$, then
	\begin{displaymath}
		F_\mu(x+2\eps)+2\eps \ge F_\mu(x_0) + \eps \ge F_{\mu_n}(x_0)
		\ge F_{\mu_n}(x)
	\end{displaymath}
	for any $n \ge N$.
	Similarly we can show that $F_\mu(x-2\eps)-2\eps \le F_{\mu_n}(x)$
	for all $x \in \R$ and $n \ge N$.
	So, $L(F_{\mu_n},F_\mu) \le 2\eps$ for all $n \ge N$.
	As $\eps > 0$ was arbitrary, we have $L(F_{\mu_n},F_\mu) \to 0$.
	
	(iii) implies (i):
	Let $x \in \R$ be a continuity point of $F_\mu$, and
	let $\eps > 0$ be given.
	Since $F_\mu$ is continuous at $x$, there is a $\delta \in (0,\eps/2)$
	such that $|F_\mu(x) - F_\mu(y)| \le \eps/2$ for all $|y - x| \le \delta$.
	Let $N \in \N$ be such that $L(F_{\mu_n},F_\mu) < \delta$ for all $n \ge N$.
	Then,
	\begin{displaymath}
		F_{\mu}(x-\delta) - \delta \le F_{\mu_n}(x) \le
		F_{\mu}(x+\delta) + \delta.
	\end{displaymath}
	for all $n \ge N$.
	Now observe that
	\begin{displaymath}
		F_\mu(x)-\eps \le F_\mu(x-\delta)-\eps/2 \le F_\mu(x-\delta) -
		\delta \le F_{\mu_n}(x)
	\end{displaymath}
	and
	\begin{displaymath}
		F_{\mu_n}(x) \le F_\mu(x+\delta)+\delta \le F_\mu(x+\delta) +
		\eps/2 \le F_\mu(x) + \eps
	\end{displaymath}
	for all $n \ge N$.
	Since $\eps > 0$ was arbitrary, $F_{\mu_n}(x) \to F_\mu(x)$.
\end{proof}

\subsection{Expected probability measures}

\begin{lemma}
	For any Borel $A \subset \R$, the map $e_A\colon\Pr(\R) \to [0,1]$
	defined by $e_A(\mu) := \mu(A)$ is measurable.
\end{lemma}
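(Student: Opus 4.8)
The plan is to reduce measurability of $e_A$ to the known generating collection of continuous bounded test functions, via a standard Dynkin-type argument. The topology on $\Pr(\R)$ is generated by the maps $\mu \mapsto \int_\R f\,d\mu$ for continuous bounded $f$, so each such map is (Borel) measurable; in particular, $\mu \mapsto \int_\R f_{p,q}\,d\mu$ is measurable for every pair $p<q$ of rationals, where $f_{p,q}$ is the piecewise-linear function from Theorem \ref{thm:ch_weak_conv}. The first step is to observe that for any real $x$, the map $\mu \mapsto F_\mu(x) = \mu\bigl((-\infty,x]\bigr)$ is measurable: choosing rationals $p_k \uparrow x$ and $q_k \downarrow x$ with $p_k<q_k$, we have $\int_\R f_{p_k,q_k}\,d\mu \to F_\mu(x)$ by dominated convergence for each fixed $\mu$ (the integrands are bounded by $1$ and decrease pointwise to $\I_{(-\infty,x]}$ off the single point $x$, which carries mass $\mu(\{x\})$ — one must be slightly careful here, but since $f_{p_k,q_k} \ge \I_{(-\infty,x]}$ and also $f_{p_k,q_k}\le \I_{(-\infty,q_k]}$, sandwiching gives $\mu((-\infty,x]) \le \lim \le \mu((-\infty,x])$ once we note $\mu((-\infty,q_k])\downarrow \mu((-\infty,x])$). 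A pointwise limit of measurable functions is measurable, so $e_{(-\infty,x]}$ is measurable for every $x$.

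Next I would run the $\pi$-$\lambda$ argument on the set $\mathcal{D} := \{\,A \subset \R \text{ Borel} \mid e_A \text{ is measurable}\,\}$. The collection $\mathcal{P}$ of half-infinite intervals $(-\infty,x]$, $x\in\R$, is a $\pi$-system generating the Borel $\sigma$-algebra of $\R$, and $\mathcal{P}\subset\mathcal{D}$ by the previous paragraph. To see $\mathcal D$ is a $\lambda$-system: it contains $\R$ since $e_\R \equiv 1$; it is closed under proper differences because $e_{B\setminus A}(\mu) = e_B(\mu) - e_A(\mu)$ whenever $A\subset B$, a difference of measurable functions; and it is closed under increasing countable unions because $e_{\bigcup_k A_k}(\mu) = \lim_k e_{A_k}(\mu)$ by continuity of measure from below, a pointwise limit of measurable functions. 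By Dynkin's $\pi$-$\lambda$ theorem, $\mathcal{D}$ contains the $\sigma$-algebra generated by $\mathcal P$, i.e.\ all Borel sets, which is exactly the claim.

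The only mildly delicate point — and the step I would flag as the one to get right — is the first-paragraph limit identifying $\lim_k \int_\R f_{p_k,q_k}\,d\mu$ with $F_\mu(x)$; the subtlety is that $f_{p_k,q_k}$ does not converge to $\I_{(-\infty,x]}$ at the point $x$ itself, so one cannot invoke a one-line dominated convergence without a remark. The clean fix is the sandwich $\I_{(-\infty,p_k]} \le f_{p_k,q_k} \le \I_{(-\infty,q_k]}$, which after integrating gives $F_\mu(p_k) \le \int f_{p_k,q_k}\,d\mu \le F_\mu(q_k)$, and then $F_\mu(p_k)\uparrow F_\mu(x^-) $, $F_\mu(q_k)\downarrow F_\mu(x)$; to land on $F_\mu(x)$ rather than $F_\mu(x^-)$ one should instead take $p_k \uparrow x$ with $p_k < x$ and $q_k \downarrow x$ with $q_k > x$ but use $f_{p_k, q_k}$ with, say, $q_k = x + 1/k$ and $p_k = x - 1/k$ (rational), giving upper bound $F_\mu(x+1/k) \downarrow F_\mu(x)$ by right-continuity and lower bound which we don't need sharp — actually it suffices to bound $\int f_{p_k,q_k}\,d\mu$ between $F_\mu(x-1/k)$ and $F_\mu(x+1/k)$ is wrong on the low side; the honest route is: $f_{x,q_k}$ (with left breakpoint exactly $x$, rational approximated) satisfies $\I_{(-\infty,x]} \le f_{x,q_k} \le \I_{(-\infty,q_k]}$, so $F_\mu(x) \le \int f_{x,q_k}\,d\mu \le F_\mu(q_k) \to F_\mu(x)$, forcing convergence to $F_\mu(x)$ exactly. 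Rationalizing the breakpoints (replace $x$ by $p_k\uparrow x$ and interleave) preserves the squeeze in the limit. Everything after this is bookkeeping.
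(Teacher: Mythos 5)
Your proposal is correct in substance and follows essentially the same route as the paper: approximate $\I_{(-\infty,x]}$ by piecewise-linear continuous bounded functions to get measurability of $\mu \mapsto F_\mu(x)$, then run the $\pi$-$\lambda$ theorem over the rays $(-\infty,x]$ (the paper's $\lambda$-system uses complements and disjoint unions, yours proper differences and increasing unions; both are fine). One caveat: your closing sentence about ``rationalizing the breakpoints'' by replacing $x$ with $p_k \uparrow x$ is false as stated --- if $\mu(\{x\})>0$ the squeeze only traps the limit between $F_\mu(x^-)$ and $F_\mu(x)$ (e.g.\ symmetric breakpoints around an atom give $\mu((-\infty,x)) + \mu(\{x\})/2$). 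Fortunately that step is unnecessary: the topology on $\Pr(\R)$ makes $\mu \mapsto \int_\R f\,d\mu$ continuous, hence measurable, for \emph{every} continuous bounded $f$, not only those with rational breakpoints, so you may simply keep the left breakpoint exactly at $x$ (the function $f_{x,x+1/k}$), which is precisely what the paper does via bounded convergence. Deleting that last sentence leaves a correct proof.
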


\begin{proof}
	For any $x \in \R$ and $\eps > 0$, let $f_{x,x+\eps}\colon\R\to\R$
	be the map which is $1$ on $(-\infty,x\,]$, $0$ on $[\,x+\eps,\infty)$,
	and linear on $[x,x+\eps]$.
	Then the map
	$\mu \mapsto \int_\R f_{x,x+\eps}\,d\mu$ is continuous, and so measurable.
	Since $\int_\R f_{x,x+1/n} \,d\mu \to \mu((-\infty,x])$
	as $n \to \infty$ by bounded convergence, the map $e_{(-\infty,x]}$ is
	measurable for any $x \in \R$.
	Let $\mathcal{C}$ be the collection of all Borel $A \subset \R$
	such that $e_A$ is measurable.
	If $A_1,A_2,\ldots \in \mathcal{C}$ are disjoint, then
	\begin{displaymath}
		e_{\bigcup_{n=1}^\infty A_n} := \sum_{n=1}^\infty e_{A_n}
	\end{displaymath}
	is measurable, and so $\bigcup_{n=1}^\infty A_n \in \mathcal{C}$.
	If $A \in \mathcal{C}$, then $e_{\R \setminus A} = 1 - e_A$ is measurable,
	and so $\R \setminus A \in \mathcal{C}$.
	These show that $\mathcal{C}$ is a $\lambda$-system containing
	$(-\infty,x\,]$ for all $x \in \R$.
	As the rays $(-\infty,x\,]$ form a $\pi$-system that generates the Borel
	$\sigma$-algebra of $\R$, the $\pi$-$\lambda$ theorem concludes the proof.
\end{proof}

\begin{theorem}
	\label{thm:expect_prob}
	Let $\mu$ be a random element of $\Pr(\R)$.
	Then there exists a unique $\E \mu \in \Pr(\R)$ satisfying
	\begin{displaymath}
		F_{\E \mu}(x) = \E[F_\mu(x)]
	\end{displaymath}
	for all $x \in \R$.
	The probability measure $\E \mu$ satisfies
	\begin{equation}
		\label{eq:expect_prob_int}
		\int_\R f \, d\E\mu = \E\left[\int_\R f \,d\mu\right]
	\end{equation}
	for all continuous and bounded $f: \R \to \R$.
\end{theorem}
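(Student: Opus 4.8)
The plan is to first construct the candidate distribution function $G(x) := \E[F_\mu(x)]$, verify it really is a distribution function, invoke the standard correspondence between distribution functions and Borel probability measures on $\R$ to produce $\E\mu$, and finally bootstrap the defining identity $F_{\E\mu}(x) = \E[F_\mu(x)]$ up to the integral formula \eqref{eq:expect_prob_int}.

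First I would observe that for each fixed $x \in \R$ the quantity $F_\mu(x) = \mu((-\infty,x]) = e_{(-\infty,x]}(\mu)$ is a genuine $[0,1]$-valued random variable, by the preceding lemma, hence integrable; so $G(x) := \E[F_\mu(x)]$ is well-defined and takes values in $[0,1]$. Monotonicity of $G$ is immediate from monotonicity of each realization $F_\mu$ together with linearity of expectation. For right-continuity, if $x_k \downarrow x$ then $F_\mu(x_k) \downarrow F_\mu(x)$ pointwise in $\omega$ (each realization being a right-continuous distribution function), so dominated convergence with dominating function $1$ gives $G(x_k) \to G(x)$; the same argument with $x_k \to \pm\infty$ yields $G(x) \to 0$ and $G(x) \to 1$ at the two ends. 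Thus $G$ is a distribution function, and by the standard correspondence there is a unique $\E\mu \in \Pr(\R)$ with $F_{\E\mu} = G$. Uniqueness of $\E\mu$ subject to the stated condition is then automatic, since a Borel probability measure on $\R$ is determined by its distribution function.

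For the integral identity I would start from half-open intervals: for $a < b$,
\[ \int_\R \I_{(a,b]}\,d\E\mu = F_{\E\mu}(b) - F_{\E\mu}(a) = \E[F_\mu(b)] - \E[F_\mu(a)] = \E\Bigl[\int_\R \I_{(a,b]}\,d\mu\Bigr], \]
where the measurability of $\mu \mapsto \mu((a,b])$ needed on the right is again furnished by the preceding lemma. By linearity this extends to all step functions. Given a continuous bounded $f$ and $\eps > 0$, I would choose $M$ with $\E\mu((-M,M]) \ge 1-\eps$ and a step function $g$ supported in $(-M,M]$ with $\infnorm{g} \le \infnorm{f}$ and $|f-g| \le \eps$ on $(-M,M]$; then $\bigl|\int_\R f\,d\E\mu - \int_\R g\,d\E\mu\bigr| \le \eps + 2\infnorm{f}\eps$, while on the random side $\bigl|\int_\R f\,d\mu - \int_\R g\,d\mu\bigr| \le \eps + 2\infnorm{f}\,\mu(\R\setminus(-M,M])$, whose expectation is $\le \eps + 2\infnorm{f}\eps$ by the choice of $M$. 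Combining these with the identity for $g$ and letting $\eps \downarrow 0$ gives \eqref{eq:expect_prob_int}; note that $\mu \mapsto \int_\R f\,d\mu$ is continuous, hence measurable, so the right-hand side is meaningful throughout.

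The distribution-function axioms for $G$ and the passage from step functions to continuous $f$ are routine; the only point demanding care is controlling the tail contribution on the random side uniformly enough to survive taking expectations, which is precisely why I pick $g$ vanishing outside $(-M,M]$ and bounded by $\infnorm{f}$ rather than merely approximating $f$ on a compact set. If one prefers, the last step can instead be done first for compactly supported $C^1$ functions via integration by parts, $\int_\R f\,d\mu = -\int_\R f'(x)F_\mu(x)\,dx$, followed by Fubini and a density argument; I would keep the step-function route as the primary argument since it needs no extra machinery.
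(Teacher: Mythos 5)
Your construction of $\E\mu$ via the distribution function $G(x) := \E[F_\mu(x)]$ is essentially identical to the paper's (monotonicity, right-continuity, and end behavior by bounded convergence, then the correspondence between distribution functions and Borel probability measures). Where you diverge is in establishing the integral identity \eqref{eq:expect_prob_int}: the paper first extends the equality $(\E\mu)(A) = \E[\mu(A)]$ from intervals $(a,b]$ to arbitrary open sets, then for nonnegative continuous bounded $f$ invokes the layer-cake representation $\int_\R f\,d\nu = \int_0^\infty \nu(\{f>t\})\,dt$ and applies Tonelli's theorem (after a small detour to verify joint measurability of $(\nu,t)\mapsto\nu(\{f>t\})$ by dyadic approximation). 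You instead stop at the identity for indicators $\I_{(a,b]}$, pass to step functions by linearity, and then approximate a continuous bounded $f$ uniformly on a large interval while controlling both tails by the choice of $M$ with $(\E\mu)(\R\setminus(-M,M]) \le \eps$. Both routes are sound. Your step-function route is more elementary in that it sidesteps the Tonelli/joint-measurability machinery entirely, at the cost of slightly more $\eps$-bookkeeping; the paper's route is arguably cleaner once the measurability lemma is in hand, since the layer-cake identity handles all nonnegative continuous bounded $f$ in one stroke without any tail-splitting. The one point in your argument that deserves a word of justification in a polished write-up is the construction of the step function $g$ supported in $(-M,M]$ with $\infnorm{g}\le\infnorm{f}$ and $|f-g|\le\eps$ there: this comes from uniform continuity of $f$ on the compact set $[-M,M]$ followed, if necessary, by truncating $g$ to the range $[-\infnorm{f},\infnorm{f}]$, which only improves the approximation. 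With that noted, the argument is complete and correct.
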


\begin{proof}
	As uniqueness is easy, we only need to show the existence.
	Define $F:\R \to \R$ by $f(x) := \E[F_\mu(x)]$.
	Since $F_\mu$ is surely nondecreasing, $f$ is nondecreasing.
	Since $F_\mu(n) \to 1$ and $F_\mu(-n) \to 0$ as $n \to \infty$ surely,
	$f(n) \to 1$ and $f(-n) \to 0$ as $n \to \infty$ by bounded convergence.
	If $x_1 \ge x_2 \ge \cdots$ and $x_n \to x \in \R$, then
	$F_\mu(x_n) \to F_\mu(x)$ surely by the right continuity of distribution
	functions, and so $f(x_n) \to f(x)$ by bounded convergence.
	This shows that $f$ is right continuous, and so the proof that
	$f$ is a distribution function is finished.
	
	Let $\E \mu$ denote the Borel probability measure on $\R$
	with distribution $f$.
	For any $-\infty < a \le b < \infty$, we have
	\begin{displaymath}
		(\E \mu) ((a,b]) = f(b) - f(a) = \E[F_\mu(b) - F_\mu(a)]
		= \E[\mu((a,b])].
	\end{displaymath}
	If $(a_1,b_1], (a_2,b_2],\ldots$ are disjoint, then
	\begin{displaymath}
		(\E \mu)\biggl(\bigcup_{n=1}^\infty (a_n,b_n]\biggr)
		= \E\biggl[\mu\biggl(\bigcup_{n=1}^\infty (a_n,b_n]\biggr)\biggr]
	\end{displaymath}
	by monotone convergence.
	Since any open subset of $\R$ is a countable union of disjoint open
	intervals, and any open interval is a countable union of disjoint bounded
	intervals of the form $(a,b]$, we see that
	\begin{displaymath}
		(\E\mu)(U) = \E[\mu(U)]
	\end{displaymath}
	holds for any open $U \subset \R$.
	
	Now we show \eqref{eq:expect_prob_int}.
	By linearity of integral and expectation, we may assume that
	$f$ is nonnegative.
	For each $t \ge 0$, let $U_t := \{\,x \in \R \mid f(x) > t\,\}$.
	We want to apply Tonelli's theorem to the map $G:\Pr(\R) \times [0,\infty)
	\to [0,1]$ given by $(\nu, t) \mapsto \nu(U_t)$, so we first show that
	this map is jointly measurable.
	For each $n \in \N$, let $G_n:\Pr(\R) \times [0,\infty) \to [0,1]$ by
	\begin{displaymath}
		G_n(\nu,t) := \nu\bigl(U_{\lceil t2^{n}\rceil2^{-n}}\bigr).
	\end{displaymath}
	Since $\nu \mapsto \nu(U)$ is measurable for any open $U$,
	each $G_n$ is measurable.
	As $G_n$ increases to $G$, we can conclude that $G$ is measurable.
	Now we can use Tonelli's theorem to conclude that
	\begin{equation*}
		\begin{split}
			\int_\R f \,d\E\mu &= \int_0^\infty (\E\mu)(U_t) \,dt
			= \int_0^{\infty} \E[\mu(U_t)] \,dt \\
			&= \E\biggl[\int_0^\infty \mu(U_t)\,dt\biggr]
			= \E\biggl[\int_\R f\,d\mu\biggr].
		\end{split}
	\end{equation*}
\end{proof}

\section{Spectra of Hermitian matrices}
\label{sec:spectra}
\subsection{Basic facts}
\label{subsec:spectra_basic}
Recall the following version of the spectral theorem from linear algebra.
\begin{theorem}[Spectral Theorem]
	\label{thm:fin_spec_thm}
	Let $V$ be an $n$-dimensional complex inner product space.
	For any self-adjoint linear operator $T:V \to V$, there exists
	an orthonormal basis of $V$ consisting of eigenvectors of $V$.
\end{theorem}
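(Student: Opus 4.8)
The plan is to argue by induction on $n = \dim V$. The case $n = 1$ is immediate: every nonzero vector spans $V$ and is automatically an eigenvector of $T$, so any unit vector forms the desired orthonormal basis. For the inductive step, assume the statement holds in all positive dimensions below $n$, and let $T \colon V \to V$ be self-adjoint with $\dim V = n \ge 2$. The one genuinely non-trivial input is that $T$ has an eigenvector at all, and I would obtain this from the fundamental theorem of algebra: fixing any basis of $V$, the characteristic polynomial $\det(T - zI)$ is a nonconstant polynomial in $z$ over $\C$, hence has a root $\lambda$; then $T - \lambda I$ is singular, so there is a unit vector $v_1$ with $Tv_1 = \lambda v_1$. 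Self-adjointness moreover forces $\lambda \in \R$, since $\lambda = \lambda\inner{v_1}{v_1} = \inner{Tv_1}{v_1} = \inner{v_1}{Tv_1} = \overline{\lambda}$, though this is not strictly needed for the statement as phrased.

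Next I would pass to the orthogonal complement $W := \{\, w \in V \mid \inner{w}{v_1} = 0 \,\}$, which has dimension $n - 1$. The key observation is that $W$ is $T$-invariant: for $w \in W$, self-adjointness gives $\inner{Tw}{v_1} = \inner{w}{Tv_1} = \overline{\lambda}\inner{w}{v_1} = 0$, so $Tw \in W$. The restriction $T|_W \colon W \to W$ is again self-adjoint, as the identity $\inner{T|_W w}{w'} = \inner{w}{T|_W w'}$ for $w, w' \in W$ is simply inherited from $V$. By the inductive hypothesis, $W$ admits an orthonormal basis $v_2, \ldots, v_n$ consisting of eigenvectors of $T|_W$, and these are eigenvectors of $T$ as well. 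Since $v_1$ is a unit vector orthogonal to $W$, the list $v_1, v_2, \ldots, v_n$ is an orthonormal basis of $V$ made up of eigenvectors of $T$, which completes the induction.

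The main obstacle --- really the only place where something beyond routine bookkeeping with the inner product enters --- is the existence of an eigenvalue, which rests on $\C$ being algebraically closed; the invariance of $W$, the self-adjointness of the restriction, and the assembly of the final basis are each a one-line computation. An alternative route would be to produce $v_1$ by maximizing the Rayleigh quotient $v \mapsto \inner{Tv}{v}$ over the (compact) unit sphere of $V$, but that imports a compactness argument that seems heavier than necessary in this finite-dimensional setting.
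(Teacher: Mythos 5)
Your proof is correct, and it fills in details the paper itself omits: the paper offers no argument for this theorem, only the citations \cite[Theorem 1.3.1]{Tao12} and \cite[Theorem 9 in Section 9.5]{HK71}. Your induction --- eigenvalue existence from the fundamental theorem of algebra, realness of the eigenvalue, $T$-invariance of the orthogonal complement, and self-adjointness of the restriction --- is exactly the standard argument given in those references, so it matches the intended proof and no changes are needed.
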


\begin{proof}
	See \cite[Theorem 1.3.1]{Tao12} or
	\cite[Theorem 9 in Section 9.5]{HK71}.
\end{proof}

Also recall the following.
\begin{proposition}
	\label{prop:self_adj_ev_real}
	Any eigenvalue of a self-adjoint linear operator on
	a complex inner product space is real.
\end{proposition}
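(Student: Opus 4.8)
The plan is a direct one-line computation from the definition of self-adjointness, applied to an eigenvector, rather than an appeal to the spectral theorem stated above. Let $T$ be a self-adjoint operator on a complex inner product space, and suppose $Tv = \lambda v$ with $v \neq 0$. First I would evaluate $\inner{Tv}{v}$ in two ways. Pulling the scalar out of the first slot gives $\inner{Tv}{v} = \lambda\inner{v}{v}$. On the other hand, $\inner{Tv}{v} = \inner{v}{Tv}$ because $T = T^{*}$, and pulling the scalar out of the second slot then gives $\inner{v}{Tv} = \overline{\lambda}\inner{v}{v}$ (here I use the convention that the inner product is conjugate-linear in its second argument; under the opposite convention the two slots simply swap, and the conclusion is unaffected).

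Subtracting the two expressions yields $(\lambda - \overline{\lambda})\inner{v}{v} = 0$. Since $v \neq 0$, positive-definiteness of the inner product forces $\inner{v}{v} > 0$, so $\lambda = \overline{\lambda}$, that is, $\lambda \in \R$.

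I do not anticipate any real obstacle: the argument is complete in these two steps. The only points that require attention are the bookkeeping of the complex conjugate, which depends on the sesquilinearity convention in force, and the remark that $\inner{v}{v}$ is strictly positive, which is immediate because an eigenvector is by definition nonzero.
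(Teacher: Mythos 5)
Your proof is correct and is essentially the same computation as the paper's: evaluate the inner product of $Tv$ with $v$ in two ways using $T=T^{\ast}$ and sesquilinearity, obtaining $\lambda\norm{v}^2=\bar{\lambda}\norm{v}^2$ and hence $\lambda\in\R$ since $\norm{v}^2>0$. Nothing is missing.
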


\begin{proof}
	Let $\lambda$ be an eigenvalue of a self-adjoint linear operator $T$
	on a complex inner product space $V$.
	If $Tv = \lambda v$ and $v \ne 0$, then
	\begin{displaymath}
		\bar{\lambda} \norm{v}^2 = \inner{v}{\lambda v}
		= \inner{v}{Tv} = \inner{Tv}{v} = \inner{\lambda v}{v}
		= \lambda \norm{v}^2,
	\end{displaymath}
	and so $\bar{\lambda} = \lambda$.
\end{proof}

The following naturally follows from Theorem \ref{thm:fin_spec_thm}
and Proposition \ref{prop:self_adj_ev_real}.
\begin{corollary} \label{cor:unit_diag}
	For any $n \times n$ Hermitian matrix $A$, there exists
	an $n \times n$ diagonal matrix $D$ with real entries and
	an $n \times n$ unitary matrix $U$ such that $A = UDU^\ast$.
\end{corollary}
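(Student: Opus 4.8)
The plan is to translate Theorem \ref{thm:fin_spec_thm} and Proposition \ref{prop:self_adj_ev_real} from the language of operators into the language of matrices. First I would regard $\C^n$ as a complex inner product space under the standard Hermitian inner product $\inner{x}{y} = \sum_{i} x_i \overline{y_i}$, and let $T\colon\C^n\to\C^n$ be the linear operator given by $T(x) = Ax$. The first step is to verify that $T$ is self-adjoint precisely because $A$ is Hermitian: for all $x,y \in \C^n$ one has $\inner{Ax}{y} = y^\ast A x = (A^\ast y)^\ast x = \inner{x}{A^\ast y} = \inner{x}{Ay}$, using $A^\ast = A$.

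Next I would apply Theorem \ref{thm:fin_spec_thm} to get an orthonormal basis $u_1,\ldots,u_n$ of $\C^n$ consisting of eigenvectors of $T$, say $Au_j = \lambda_j u_j$. By Proposition \ref{prop:self_adj_ev_real}, each $\lambda_j$ is real. Then I would form the matrix $U$ whose $j$-th column is $u_j$, and let $D := \operatorname{diag}(\lambda_1,\ldots,\lambda_n)$, which has real entries. The orthonormality $\inner{u_i}{u_j} = \delta_{ij}$ says exactly that $U^\ast U = I$, i.e.\ $U$ is unitary (hence $U^\ast = U^{-1}$). The eigenvalue equations $Au_j = \lambda_j u_j$ for $j=1,\ldots,n$ assemble column-by-column into the single matrix identity $AU = UD$. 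Multiplying on the right by $U^\ast = U^{-1}$ gives $A = UDU^\ast$, as desired.

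I do not expect any real obstacle here; the only thing requiring a little care is making the bookkeeping between ``column of $U$'' and ``eigenvector of $T$'' explicit, and noting that ``$U$ unitary'' is literally the statement that its columns are orthonormal. Everything else is a direct invocation of the two previously established results.
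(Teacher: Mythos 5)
Your proposal is correct and is exactly the argument the paper has in mind: the paper simply states that the corollary ``naturally follows'' from Theorem \ref{thm:fin_spec_thm} and Proposition \ref{prop:self_adj_ev_real}, and your write-up supplies precisely that translation (self-adjointness of $x \mapsto Ax$, orthonormal eigenbasis as the columns of $U$, real eigenvalues on the diagonal of $D$, and $AU = UD$ giving $A = UDU^\ast$). Nothing is missing.
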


\begin{definition}[Ordered eigenvalues]
	If $A$ is an $n \times n$ Hermitian matrix,
	then we denote the eigenvalues of $A$ counted with multiplicities as
	\begin{displaymath}
		\lambda_1(A) \ge \cdots \ge \lambda_n(A).
	\end{displaymath}
\end{definition}

\begin{definition}[Spectral distributions]
	If $A$ is an $n \times n$ Hermitian matrix, then the
	\emph{spectral distribution} of $A$ is the Borel probability measure
	on $\R$ defined by
	\[ \mu_A := \frac{1}{n} \sum_{i=1}^n \delta_{\lambda_i(A)}. \]
	We write $F_A$ as a shorthand for $F_{\mu_A}$.
\end{definition}

\begin{theorem}[Courant-Fischer minimax theorem]
	\label{thm:ev_minimax}
	Let $A$ be an $n \times n$ Hermitian matrix.
	For each $1 \le i \le n$, we have
	\begin{displaymath}
		\lambda_i(A) = \sup_{\dim V = i}
		\inf_{v \in V : \norm{v}=1} v^\ast Av
	\end{displaymath}
	and
	\begin{displaymath}
		\lambda_i(A) = \inf_{\dim V = n-i+1}
		\sup_{v \in V : \norm{v}=1} v^\ast Av,
	\end{displaymath}
	where $V$ ranges over the subspaces of $\C^n$.
\end{theorem}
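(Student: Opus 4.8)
The plan is to diagonalize $A$ and reduce both identities to a statement about convex combinations of the eigenvalues, where the essential content becomes a dimension count. By Corollary~\ref{cor:unit_diag}, write $A = UDU^\ast$ with $U$ unitary having columns $u_1,\ldots,u_n$ and $D = \mathrm{diag}(\lambda_1(A),\ldots,\lambda_n(A))$, so that $u_1,\ldots,u_n$ is an orthonormal basis of $\C^n$ with $Au_k = \lambda_k(A)u_k$. For a unit vector $v = \sum_{k=1}^n c_k u_k$ (so $\sum_k |c_k|^2 = 1$) one has $v^\ast Av = \sum_{k=1}^n |c_k|^2 \lambda_k(A)$, a convex combination of the eigenvalues; this single identity is the only computation needed, and it immediately gives $\lambda_n(A) \le v^\ast A v \le \lambda_1(A)$, with $v^\ast A v = \lambda_k(A)$ when $v = u_k$.

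For the first formula I would prove the two inequalities separately. For ``$\ge$'', take $V_0 := \mathrm{span}\{u_1,\ldots,u_i\}$; every unit $v \in V_0$ is supported on the top $i$ eigenvectors, so $v^\ast Av = \sum_{k=1}^i |c_k|^2 \lambda_k(A) \ge \lambda_i(A)$, whence $\inf_{v \in V_0 : \norm{v}=1} v^\ast Av \ge \lambda_i(A)$ (in fact equality, attained at $v = u_i$), so the supremum over all $i$-dimensional $V$ is $\ge \lambda_i(A)$. For ``$\le$'', let $V$ be any $i$-dimensional subspace and set $W := \mathrm{span}\{u_i,u_{i+1},\ldots,u_n\}$, which has dimension $n-i+1$. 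Since $\dim V + \dim W = n+1 > n$, the intersection $V \cap W$ contains a unit vector $v$; being supported on $u_i,\ldots,u_n$, it satisfies $v^\ast Av = \sum_{k=i}^n |c_k|^2 \lambda_k(A) \le \lambda_i(A)$, so $\inf_{v \in V : \norm{v}=1} v^\ast Av \le \lambda_i(A)$ for every such $V$, giving the reverse inequality.

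For the second formula I would avoid repeating the argument and instead apply the first formula to $-A$. Using $\lambda_j(-A) = -\lambda_{n-j+1}(A)$ together with the facts that $v^\ast(-A)v = -v^\ast Av$ and that $\inf$ and $\sup$ interchange under negation, the first identity for $-A$ rearranges to $\lambda_{n-i+1}(A) = \inf_{\dim V = i}\sup_{v \in V : \norm{v}=1} v^\ast A v$; reindexing via $j = n-i+1$ yields exactly the claimed expression for $\lambda_j(A)$.

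The one genuinely non-formal point is the nontrivial-intersection step used in the ``$\le$'' direction: two subspaces of $\C^n$ whose dimensions sum to more than $n$ must meet in a nonzero vector. I expect this (and its mirror in the second formula) to be the main thing to state carefully, although it is standard, following from $\dim(V \cap W) \ge \dim V + \dim W - n$. Once the convex-combination identity $v^\ast Av = \sum_k |c_k|^2 \lambda_k(A)$ and this dimension count are in place, the rest is bookkeeping.
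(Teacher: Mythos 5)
Your proposal is correct and follows essentially the same route as the paper: diagonalize via the spectral theorem, express $v^\ast Av$ as a convex combination of eigenvalues, use the span of the top $i$ eigenvectors for one inequality and the dimension-count intersection $V \cap \mathrm{span}\{u_i,\ldots,u_n\} \ne \{0\}$ for the other, then deduce the second formula by applying the first to $-A$. The only cosmetic difference is that the paper reduces without loss of generality to a diagonal matrix rather than carrying the eigenbasis explicitly.
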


\begin{proof}
	We only need to show the first equality, since the second follows
	by applying the first to $-A$.
	By the spectral theorem, we may assume that $A$ is diagonal with
	$\lambda_i(A)$ at the $(i,i)$-entry.
	Note that if $v = \begin{pmatrix} v_1 & \cdots & v_n \end{pmatrix}^T$,
	then we have $v^\ast Av = \sum_{i=1}^n \lambda_i(A)|v_i|^2$.
	If $V$ is the subspace spanned by $e_1,\ldots,e_i$,
	then $\inf_{v \in V : \norm{v}=1} v^\ast Av = \lambda_i(A)$.
	To show the other direction, let $V$ be any $i$-dimensional
	subspace of $\C^n$.
	If $W$ is the subspace spanned by $e_i,\ldots,e_n$, then
	$V \cap W \ne \{\mathbf{0}\}$ follows from
	\begin{displaymath}
		\dim V + \dim W = \dim (V \cap W) + \dim (V + W).
	\end{displaymath}
	If we choose any $w \in V \cap W$ with $\norm{w}=1$, then
	\begin{displaymath}
		\inf_{v \in V:\norm{v}=1} v^\ast Av \le w^\ast Aw \le \lambda_i(A).
	\end{displaymath}
\end{proof}

\begin{theorem}[Cauchy interlacing law]
	\label{thm:cauchy_inter}
	If $A_n$ is an $n \times n$ Hermitian matrix and $A_{n-1}$
	is the top left $(n-1) \times (n-1)$ minor of $A_n$, then
	\begin{displaymath}
		\lambda_{i+1}(A_n) \le \lambda_i(A_{n-1}) \le \lambda_i(A_n)
	\end{displaymath}
	for any $1 \le i \le n-1$.
\end{theorem}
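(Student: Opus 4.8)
The plan is to derive both inequalities from the Courant--Fischer minimax theorem (Theorem \ref{thm:ev_minimax}) by regarding $\C^{n-1}$ as the subspace of $\C^n$ consisting of vectors whose last coordinate vanishes. Let $\iota\colon\C^{n-1}\to\C^n$ be the inclusion $v\mapsto(v_1,\dots,v_{n-1},0)^T$. The elementary observation driving everything is that, since $A_{n-1}$ is the top left $(n-1)\times(n-1)$ block of $A_n$, we have $(\iota v)^\ast A_n(\iota v)=v^\ast A_{n-1}v$ and $\norm{\iota v}=\norm{v}$ for every $v\in\C^{n-1}$. Hence $V\mapsto\iota V$ identifies the $i$-dimensional subspaces of $\C^{n-1}$ with exactly those $i$-dimensional subspaces of $\C^n$ that lie inside the hyperplane $\{x\in\C^n\mid x_n=0\}$, and it preserves the relevant Rayleigh quotients.

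For $\lambda_i(A_{n-1})\le\lambda_i(A_n)$ I would use the sup--inf form. By Theorem \ref{thm:ev_minimax},
\[
\lambda_i(A_{n-1})=\sup_{\substack{V\subseteq\C^{n-1}\\ \dim V=i}}\ \inf_{\substack{v\in V\\ \norm{v}=1}} v^\ast A_{n-1}v
=\sup_{\substack{V\subseteq\C^{n-1}\\ \dim V=i}}\ \inf_{\substack{v\in V\\ \norm{v}=1}} (\iota v)^\ast A_n(\iota v).
\]
Each $\iota V$ is an $i$-dimensional subspace of $\C^n$, so the right-hand side is a supremum over a subfamily of all $i$-dimensional subspaces of $\C^n$; therefore it is at most $\sup_{\dim V=i}\inf_{v\in V,\,\norm{v}=1}v^\ast A_n v=\lambda_i(A_n)$.

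For $\lambda_{i+1}(A_n)\le\lambda_i(A_{n-1})$ I would use the inf--sup form instead. By Theorem \ref{thm:ev_minimax}, $\lambda_{i+1}(A_n)$ is the infimum over $(n-i)$-dimensional subspaces $V$ of $\C^n$ of $\sup_{v\in V,\,\norm{v}=1}v^\ast A_n v$, whereas $\lambda_i(A_{n-1})$ is the infimum of the same quantity with $V$ ranging only over $(n-i)$-dimensional subspaces of $\C^{n-1}$ (the Rayleigh quotients agreeing under $\iota$ by the observation above). Infimizing over a smaller family of subspaces can only increase the value, so $\lambda_{i+1}(A_n)\le\lambda_i(A_{n-1})$.

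The only point needing a moment's care is the dimension bookkeeping coming out of Theorem \ref{thm:ev_minimax}: for the $n\times n$ matrix the inf--sup form uses $\dim V=n-(i+1)+1=n-i$, and for the $(n-1)\times(n-1)$ matrix it uses $\dim V=(n-1)-i+1=n-i$, so the two families of competitor subspaces are indeed nested as claimed. Beyond Theorem \ref{thm:ev_minimax} no further input is required, and the argument is otherwise routine.
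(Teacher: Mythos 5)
Your argument is correct and uses essentially the same approach as the paper: both inequalities are derived from the Courant--Fischer minimax theorem together with the observation that $i$-dimensional subspaces of $\C^{n-1}$ (embedded in $\C^n$) form a subfamily of all $i$-dimensional subspaces of $\C^n$ on which the Rayleigh quotient agrees. The only cosmetic difference is that for the inequality $\lambda_{i+1}(A_n)\le\lambda_i(A_{n-1})$ you invoke the inf--sup form of Courant--Fischer directly, whereas the paper instead applies the already-proved inequality $\lambda_i(A_{n-1})\le\lambda_i(A_n)$ to $-A_n$; these are equivalent because the inf--sup form itself is obtained from the sup--inf form by negating the matrix.
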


\begin{proof}
	For any $v \in \C^{n-1}$, we have
	\begin{displaymath}
		v^\ast A_{n-1}v = \begin{pmatrix} v & 0 \end{pmatrix}
		A_{n} \begin{pmatrix} v & 0\end{pmatrix}^T.
	\end{displaymath}
	So, by Theorem \ref{thm:ev_minimax}, we have
	\begin{align*}
		\lambda_i(A_{n-1}) &= \sup_{V \subset \C^{n-1}:\dim V = i}
		\inf_{v \in V:\norm{v}=1} v^\ast A_{n-1}v \\
		&\le \sup_{V \subset \C^n:\dim V = i} \inf_{v \in V:\norm{v}=1}
		v^\ast A_nv = \lambda_i(A_n).
	\end{align*}
	By applying this result to $-A_n$, we have
	\begin{displaymath}
		\lambda_{i+1}(A_n) = -\lambda_{n-i}(-A_n)
		\le -\lambda_{n-i}(-A_{n-1}) = \lambda_i(A_{n-1}).
	\end{displaymath}
\end{proof}

\subsection{Perturbations by small Frobenius norms}
We will show that spectral distributions are stable under
two types of perturbations.
The first can be described using the following norm.

\begin{definition}
	\label{def:fnorm}
	If $A = (a_{ij})_{i,j=1}^n$ is an $n \times n$ complex matrix,
	then the \emph{Frobenius norm} of $A$ is given by
	\[\fnorm{A} := \biggl( \sum_{i,j=1}^n |a_{j}|^2 \biggr)^{1/2}.\]
\end{definition}

Note that the Frobenius norm is just the $L^2$-norm on $\C^{n^2}$.
If $A$ is a Hermitian matrix, then $\fnorm{A}^2 = \tr (A^2)$.
The following inequality tells us that the ordered tuple of eigenvalues
is stable under perturbations with small Frobenius norms.

\begin{theorem}[Hoffman-Wielandt inequality]
	\label{thm:hof-wie_ineq}
	If $A$ and $B$ are $n \times n$ Hermitian matrices, then
	\[\sum_{i=1}^n (\lambda_i(A) - \lambda_i(B))^2 \le \fnorm{A-B}^2.\]
\end{theorem}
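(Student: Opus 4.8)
The plan is to reduce the inequality, via the spectral theorem, to a statement about doubly stochastic matrices, and then to finish using the Birkhoff--von Neumann theorem and the rearrangement inequality.

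First I would diagonalize. By Corollary \ref{cor:unit_diag} write $A = U D_A U^\ast$ and $B = V D_B V^\ast$, where $U,V$ are unitary and $D_A$, $D_B$ are the real diagonal matrices with $\lambda_i(A)$, $\lambda_i(B)$ in the $(i,i)$-entry, respectively. Since $\fnorm{M}^2 = \tr(MM^\ast)$ and the trace is cyclic, the Frobenius norm is unchanged by multiplying $M$ by a unitary on either side; hence, setting $W := U^\ast V$,
\[ \fnorm{A-B}^2 = \fnorm{U^\ast(A-B)U}^2 = \fnorm{D_A - W D_B W^\ast}^2. \]
Expanding the square (using that $D_A$, $D_B$ are Hermitian and $W^\ast W = I$) gives
\[ \fnorm{A-B}^2 = \tr(D_A^2) + \tr(D_B^2) - 2\tr(D_A W D_B W^\ast) = \sum_{i=1}^n \lambda_i(A)^2 + \sum_{i=1}^n \lambda_i(B)^2 - 2\sum_{i,j=1}^n \lambda_i(A)\lambda_j(B)\,|W_{ij}|^2. \]
Since $\sum_i (\lambda_i(A)-\lambda_i(B))^2 = \sum_i \lambda_i(A)^2 + \sum_i \lambda_i(B)^2 - 2\sum_i \lambda_i(A)\lambda_i(B)$, the desired inequality is equivalent to
\[ \sum_{i,j=1}^n \lambda_i(A)\lambda_j(B)\,|W_{ij}|^2 \;\le\; \sum_{i=1}^n \lambda_i(A)\lambda_i(B). \]

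Next I would observe that the matrix $P := (|W_{ij}|^2)_{i,j=1}^n$ is doubly stochastic: its row sums are the diagonal entries of $WW^\ast = I$ and its column sums are the diagonal entries of $W^\ast W = I$. So it suffices to prove that $\sum_{i,j} \lambda_i(A)\lambda_j(B)\,P_{ij} \le \sum_i \lambda_i(A)\lambda_i(B)$ for \emph{every} doubly stochastic $P$. The left-hand side is an affine function of $P$ on the compact convex set of doubly stochastic matrices, so it is maximized at an extreme point of that set; by the Birkhoff--von Neumann theorem the extreme points are precisely the permutation matrices. For a permutation matrix corresponding to $\sigma$ the value is $\sum_i \lambda_i(A)\lambda_{\sigma(i)}(B)$, and since both $(\lambda_i(A))_i$ and $(\lambda_i(B))_i$ are sorted in decreasing order, the rearrangement inequality gives $\sum_i \lambda_i(A)\lambda_{\sigma(i)}(B) \le \sum_i \lambda_i(A)\lambda_i(B)$ for all $\sigma$. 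This yields the reduced inequality and hence the theorem.

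The main obstacle is the last step: one needs the Birkhoff--von Neumann theorem (or, what is enough here, the fact that a linear functional on the set of doubly stochastic matrices attains its maximum at a permutation matrix, which can be obtained directly by a cycle-cancelling argument that pushes mass around a cycle in the support of $P$ until some entry vanishes), together with the rearrangement inequality. Both facts are elementary; if the exposition is to remain fully self-contained, the Birkhoff step is the one requiring the most work, and otherwise it can simply be cited. Everything else is the routine algebra of expanding $\fnorm{D_A - W D_B W^\ast}^2$ and recognizing $(|W_{ij}|^2)$ as doubly stochastic.
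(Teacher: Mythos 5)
Your argument is correct, and its first half coincides with the paper's: both reduce, via the spectral theorem and unitary invariance of the Frobenius norm, to showing that $\sum_{i,j}\lambda_i(A)\lambda_j(B)\,p_{ij}\le\sum_i\lambda_i(A)\lambda_i(B)$ where $p_{ij}=|W_{ij}|^2$ is doubly stochastic and both eigenvalue sequences are sorted. The two proofs part ways at that combinatorial lemma. You appeal to the Birkhoff--von Neumann theorem (an affine functional on the doubly stochastic polytope is maximized at a permutation matrix) and then to the rearrangement inequality, which is the slickest route if one is willing to cite those results; as you note, Birkhoff is the only nontrivial import. The paper instead gives a direct exchange argument: given a doubly stochastic $v\ne I$, it finds an off-diagonal entry $v_{k\ell}>0$, moves its mass onto the diagonal, and shows the objective does not decrease; iterating drives $v$ to $I$. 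That argument is more self-contained but, as written, it assumes $v_{ij}=v_{ji}$ to keep the row sums intact after the exchange, whereas $|W_{ij}|^2$ need not be symmetric; your version sidesteps this wrinkle by treating the row- and column-sum constraints separately. A small trade-off: Birkhoff gives you a clean finish and is more robust, while the paper's explicit exchange keeps the exposition elementary (and, if one wants, can be repaired to work without symmetry by doing a two-sided cycle-cancelling step, essentially re-deriving the piece of Birkhoff actually needed). Either way your proposal is sound.
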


\begin{proof}
	Recall that eigenvalues and traces are similarity invariant.
	So, by the spectral theorem, we have
	\[\sum_{i=1}^n\lambda_i(A)^2 = \tr A^2 \qquad\text{and}\qquad
	\sum_{i=1}^n\lambda_i(B)^2 = \tr B^2.\]
	Thus, it is enough to show
	\[\tr (AB) \le \sum_{i=1}^n \lambda_i(A)\lambda_i(B).\]
	(Recall that the right side of the desired inequality
	is equal to $\tr(A-B)^2$.)
	Again by the spectral theorem, we may assume that $A$ is diagonal
	with $\lambda_i(A)$ at its $i$-th entry, and write $B = UDU^\ast$
	for some unitary $U$ where $D$ is the diagonal matrix with
	$\lambda_i(B)$ as its $i$-th entry.
	If $u_{ij}$ denotes the $(i,j)$-entry of $U$, we have
	\[\tr(AB) = \tr(AUDU^\ast)
	= \sum_{i,j=1}^n |u_{ij}|^2\lambda_i(A)\lambda_j(B).\]
	It is enough to show that if $a_1 \ge \cdots \ge a_n$ and
	$b_1 \ge \cdots \ge b_n$, then the maximum of
	$\sum_{i,j=1}^n v_{ij} a_ib_j$ where $v_{ij} = v_{ji} \ge 0$ and
	$\sum_{j=1}^n v_{ij} = 1$ is obtained when $(v_{ij})_{i,j=1}^n = I$;
	that is, $v_{11} = \cdots = v_{nn} = 1$ and $v_{ij} = 0$ whenever $i \ne j$.
	Let $(v_{ij})_{i,j=1}^n$ where $v_{ij} = v_{ji} \ge 0$ and
	$\sum_{j=1}^n v_{ij} = 1$ is given.
	If $(v_{ij}) \ne I$, we have $v_{kk} < 1$ for some $k$.
	Since $\sum_{j=1}^n v_{kj} = 1$, we have $v_{k\ell} > 0$
	for some $\ell \ne k$.
	Let $w_{kk} = v_{kk} + v_{k\ell}$,
	$w_{\ell \ell} = v_{\ell \ell} + v_{k\ell} $, $w_{k\ell} = w_{\ell k} = 0$,
	and $w_{ij} = v_{ij}$ for all other $(i,j)$'s.
	Then we have $w_{ij} = w_{ji} \ge 0$ and $\sum_{j=1}^n w_{ij} = 1$.
	Also,
	\begin{equation*}
		\begin{split}
			\sum_{i,j=1}^n w_{ij}a_ib_j - \sum_{i,j=1}^n v_{ij}a_ib_j
			&= v_{k\ell}(a_kb_k + a_\ell b_\ell - a_kb_\ell - a_\ell b_k) \\
			&= v_{k\ell}(a_k-a_\ell)(b_k-b_\ell) \ge 0.
		\end{split}
	\end{equation*}
	Note that $(w_{ij})$ has more $1$'s on the diagonal than $(v_{ij})$.
	If we repeat this procedure, we will arrive at $I$,
	and this shows our claim.
\end{proof}

From the Hoffman-Wielandt inequality (Theorem \ref{thm:hof-wie_ineq}),
it follows that the spectral distribution is also stable
under perturbations of small Frobenius norms.

\begin{corollary}
	\label{cor:perturb_frob_norm}
	If $A$ and $B$ are $n \times n$ Hermitian matrices, then
	\begin{displaymath}
		[L(F_A,F_B)]^3 \le \frac{1}{n}\fnorm{A-B}^2.
	\end{displaymath}
	(For the definition of $\fnorm{\cdot}$, see Definition \ref{def:fnorm}.)
\end{corollary}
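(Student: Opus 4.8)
The plan is to combine the Hoffman--Wielandt inequality (Theorem~\ref{thm:hof-wie_ineq}) with an elementary counting argument that converts an $L^2$-bound on sorted eigenvalue differences into a bound on the L\'evy distance. First I would pass to \emph{increasingly} ordered eigenvalues: write $a_1 \le \cdots \le a_n$ for the eigenvalues of $A$ and $b_1 \le \cdots \le b_n$ for those of $B$. Reindexing $i \mapsto n+1-i$ shows $\sum_{i=1}^n (a_i - b_i)^2 = \sum_{i=1}^n (\lambda_i(A) - \lambda_i(B))^2$, so Theorem~\ref{thm:hof-wie_ineq} gives $\sum_{i=1}^n (a_i-b_i)^2 \le \fnorm{A-B}^2$. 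Set $\epsilon := \bigl(\tfrac1n \fnorm{A-B}^2\bigr)^{1/3}$; if $\epsilon = 0$ then $a_i = b_i$ for all $i$, hence $F_A = F_B$ and there is nothing to prove, so assume $\epsilon > 0$. It then suffices to verify that $F_A(x-\epsilon) - \epsilon \le F_B(x) \le F_A(x+\epsilon)+\epsilon$ for every $x \in \R$, since by the definition of the L\'evy metric this forces $L(F_A,F_B) \le \epsilon$, and cubing yields the claim.

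For the right-hand inequality, fix $x \in \R$ and set $k := \#\{i : b_i \le x\}$, so that $b_1,\dots,b_k \le x$. Call an index $i \in \{1,\dots,k\}$ \emph{bad} if $a_i > x + \epsilon$; for such $i$ we have $a_i - b_i > \epsilon$, hence $(a_i-b_i)^2 > \epsilon^2$, and since these terms all occur among the nonnegative terms of $\sum_{i=1}^n (a_i-b_i)^2 \le \fnorm{A-B}^2 = n\epsilon^3$, the number of bad indices is at most $n\epsilon$. Consequently $\#\{i : a_i \le x+\epsilon\} \ge k - n\epsilon$, that is, $F_A(x+\epsilon) \ge \tfrac{k}{n} - \epsilon = F_B(x) - \epsilon$, which is the desired inequality. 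Swapping the roles of $A$ and $B$ gives $F_A(y) \le F_B(y+\epsilon)+\epsilon$ for all $y$, and taking $y = x-\epsilon$ yields the left-hand inequality $F_A(x-\epsilon) - \epsilon \le F_B(x)$.

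The only real subtlety is the bookkeeping in the counting step: keeping the ordering conventions straight (the increasing order used in the argument versus the paper's decreasing convention), and checking that each bad index genuinely contributes at least $\epsilon^2$ to the Hoffman--Wielandt sum --- this is exactly what pins the exponent $3$ in $L(F_A,F_B)^3$, via the balancing $\fnorm{A-B}^2 / \epsilon^2 = n\epsilon$. Everything else is a direct unwinding of definitions, and the degenerate case $\fnorm{A-B}=0$ is dealt with separately as above.
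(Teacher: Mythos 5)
Your proof is correct and follows essentially the same route as the paper's: Hoffman--Wielandt plus a counting argument showing that at most $\fnorm{A-B}^2/\eps^2$ sorted eigenvalue pairs can straddle a gap of width $\eps$, then balancing with $\eps^3 = \tfrac{1}{n}\fnorm{A-B}^2$. The only differences (increasing vs.\ decreasing ordering, deriving the second L\'evy inequality by swapping $A$ and $B$ rather than directly, and the explicit treatment of the degenerate case $\fnorm{A-B}=0$) are cosmetic.
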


\begin{proof}
	For any $x \in \R$ and $\eps > 0$, we will show
	\begin{equation}
		\label{eq:perturb_frob_1}
		F_A(x) \le F_B(x + \eps) + \frac{1}{\eps^2n}\fnorm{A-B}^2
	\end{equation}
	and
	\begin{equation}
		\label{eq:perturb_frob_2}
		F_A(x) \ge F_B(x-\eps) - \frac{1}{\eps^2n}\fnorm{A-B}^2.
	\end{equation}
	Let $i := \bigl|\{\,\ell\mid\lambda_\ell(A) > x\,\}\bigr|$ and
	$j := \bigl|\{\,\ell\mid\lambda_\ell(B) > x+\eps\,\}\bigr|$.
	Since $\lambda_k(A) \le x$ and $\lambda_k(B) > x + \eps$
	for each $i < k \le j$, we have
	\begin{displaymath}
		(j-i)\eps^2 \le \sum_{i < k \le j} |\lambda_k(A) - \lambda_k(B)|^2
		\le \fnorm{A-B}^2.
	\end{displaymath}
	As $j-i = n(F_A(x) - F_B(x+\eps))$, \eqref{eq:perturb_frob_1} follows.
	Now let $i' := \bigl|\{\,\ell\mid\lambda_\ell(A) > x\,\}\bigr|$ and
	$j' := \bigl|\{\,\ell\mid\lambda_\ell(B) > x-\eps\,\}\bigr|$.
	Then,
	\begin{displaymath}
		(i'-j')\eps^2 \le \sum_{j' < k \le i'}|\lambda_k(A) - \lambda_k(B)|^2
		\le \fnorm{A-B}^2,
	\end{displaymath}
	and so \eqref{eq:perturb_frob_2} follows.
	
	Now let $\eps > 0$ be such that $\eps^3 := \frac{1}{n}\fnorm{A-B}^2$.
	Then, $\frac{1}{\eps^2n}\fnorm{A-B}^2 = \eps$.
	Since
	\begin{displaymath}
		F_B(x-\eps) - \eps \le F_A(x) \le F_B(x+\eps) + \eps
	\end{displaymath}
	for all $x \in \R$, we have $L(F_A,F_B) \le \eps$,
	and thus the desired claim follows.
\end{proof}

\subsection{Perturbations by small ranks}
The second type of perturbation is the low-rank perturbation.

\begin{theorem}[Rank inequality]
	\label{thm:rank_ineq}
	If $A$ and $B$ are $n \times n$ Hermitian matrices, then
	\begin{displaymath}
		\infnorm{F_{A} - F_{B}} \le \frac{\rank(A - B)}{n}
	\end{displaymath}
	where $\infnorm{f} := \sup_{x \in \R} |f(x)|$.
\end{theorem}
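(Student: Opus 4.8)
The plan is to compare the distribution functions of $A$ and $B$ directly, using the Courant--Fischer minimax characterization of eigenvalues (Theorem \ref{thm:ev_minimax}). Write $r := \rank(A-B)$, and let $K := \ker(A-B) \subset \C^n$, so that $\dim K \ge n-r$ and $Av = Bv$ for all $v \in K$. The key point is that $v^\ast A v = v^\ast B v$ whenever $v \in K$, so intersecting the test subspaces in the minimax formula with $K$ costs us at most $r$ in the index.

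More precisely, first I would fix $x \in \R$ and set $i := n F_A(x) = |\{\ell : \lambda_\ell(A) \le x\}|$, so that $\lambda_i(A) \le x$ (when $i \ge 1$). I want to show $F_B(x) \ge F_A(x) - r/n$, i.e.\ that $\lambda_{i+r}(B) \le x$ whenever $i + r \le n$. Using the ``$\inf\sup$'' form of the minimax theorem, $\lambda_{i+r}(B) = \inf_{\dim V = n-i-r+1} \sup_{v \in V, \norm{v}=1} v^\ast B v$. I would choose a good candidate subspace $V$: take the $\lambda_i(A)$-eigenspace-and-below subspace for $A$, that is, a subspace $W$ with $\dim W = n - i + 1$ on which $\sup_{v \in W, \norm{v}=1} v^\ast A v \le \lambda_i(A) \le x$ (this $W$ exists by the same minimax theorem). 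Then $V := W \cap K$ has $\dim V \ge (n-i+1) + (n-r) - n = n - i - r + 1$, and for unit $v \in V \subset K$ we get $v^\ast B v = v^\ast A v \le x$. Hence $\lambda_{i+r}(B) \le x$, giving $n F_B(x) \ge i + r$... wait, I need the inequality in the right direction: $\lambda_{i+r}(B) \le x$ means at least $n - (i+r) + 1$ eigenvalues... let me instead count eigenvalues $> x$. Set $i := |\{\ell : \lambda_\ell(A) > x\}| = n(1 - F_A(x))$; then $\lambda_i(A) > x \ge \lambda_{i+1}(A)$. I claim $|\{\ell : \lambda_\ell(B) > x\}| \le i + r$. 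By the ``$\sup\inf$'' form, it suffices to show $\lambda_{i+r+1}(B) \le x$, i.e.\ that for every subspace $V$ with $\dim V = i+r+1$ there is a unit $v \in V$ with $v^\ast B v \le x$. Given such $V$, intersect with $K$: $\dim(V \cap K) \ge (i+r+1) + (n-r) - n = i+1$; meanwhile the span $U$ of the top-$i$ eigenvectors of $A$ has dimension $i$, so $(V \cap K) \cap U^\perp \ne \{0\}$, and picking a unit $v$ there gives $v^\ast B v = v^\ast A v \le \lambda_{i+1}(A) \le x$. Symmetrically (swapping $A$ and $B$, using $\rank(B-A) = \rank(A-B)$) one gets $|\{\ell : \lambda_\ell(A) > x\}| \le |\{\ell : \lambda_\ell(B) > x\}| + r$. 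Combining, $|n(1-F_A(x)) - n(1-F_B(x))| \le r$ for every $x$, which is exactly $\infnorm{F_A - F_B} \le r/n$.

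The only genuinely delicate point is bookkeeping with the dimension counts and the boundary cases ($i = 0$, $i + r \ge n$, or $x$ below the spectrum), where the relevant subspace may be all of $\C^n$ or the trivial space; these are handled by the conventions that $\lambda_0 = +\infty$, $\lambda_{n+1} = -\infty$, and an empty sup/inf behaves accordingly, so I would state the argument so that it degenerates gracefully. The substantive content — that a rank-$r$ perturbation can shift the position of any given eigenvalue by at most $r$ slots — is entirely captured by the ``intersect the minimax test space with $\ker(A-B)$'' trick, and everything else is the $\pi$-$\lambda$-free, purely linear-algebraic verification sketched above. An alternative route is to note that $B = A + (A - B)$ and write $A - B = P - Q$ with $P, Q \succeq 0$ of rank $\le r$ (spectral decomposition of the Hermitian matrix $A-B$ into positive and negative parts), then apply Weyl-type monotonicity: adding a rank-$\le r$ positive semidefinite matrix moves each $\lambda_\ell$ up but not past $\lambda_{\ell - r}$ of the original, and similarly for subtraction; chaining the two inequalities yields the same bound. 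I would present whichever is shorter given the minimax theorem already in hand, which is the first one.
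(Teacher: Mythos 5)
Your argument is correct, and it reaches the bound by a genuinely different route than the paper. The paper first conjugates by a unitary so that $A-B$ is diagonal, permutes rows and columns so that the difference sits in the top-left $k\times k$ block, and then applies the Cauchy interlacing law (Theorem \ref{thm:cauchy_inter}, iterated $k$ times) to sandwich both $F_A(x)$ and $F_B(x)$ within $k/n$ of the eigenvalue counting function of the \emph{common} $(n-k)\times(n-k)$ minor $A_{22}$; the estimate $|F_A(x)-F_B(x)|\le k/n$ then drops out because both matrices are compared against the same submatrix. You instead work directly with $K=\ker(A-B)$, of dimension at least $n-r$, and run the Courant--Fischer minimax (Theorem \ref{thm:ev_minimax}) once: intersecting an arbitrary $(i+r+1)$-dimensional test subspace with $K$ and with the orthocomplement of the top-$i$ eigenspace of $A$ produces a unit vector $v$ with $v^\ast Bv=v^\ast Av\le\lambda_{i+1}(A)\le x$, so the number of eigenvalues of $B$ above any level $x$ exceeds that of $A$ by at most $r$, and symmetry finishes it. Your dimension counts are right ($\dim(V\cap K)\ge i+1$ and $(i+1)+(n-i)>n$), the boundary cases are harmless as you note, and the initial misstep about the direction of the inequality is cleanly self-corrected by switching to counting eigenvalues above $x$. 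What each approach buys: yours is a single-step, coordinate-free argument that never needs the unitary/permutation reduction nor the interlacing lemma (which the paper only states for deleting one row and column, so its proof implicitly iterates it); the paper's version reuses a result already established and keeps the subspace bookkeeping to a minimum by comparing against a shared minor. Your alternative sketch via the decomposition $A-B=P-Q$ with $P,Q\succeq0$ of rank at most $r$ and Weyl-type monotonicity is also a valid third route.
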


Note that $L(F_A,F_B) \le \infnorm{F_A - F_B}$.
\begin{proof}
	Let $k := \rank(A-B)$.
	Note that replacing $A$ and $B$ with $U AU^\ast$ and $UBU^\ast$
	for some unitary $U$ doesn't change each side of the desired inequality.
	So, using Corollary \ref{cor:unit_diag},
	we may assume that $A-B$ is diagonal.
	By swapping rows and columns, we can further assume that
	\begin{displaymath}
		A =
		\begin{pmatrix}
			A_{11} & A_{12} \\
			A_{21} & A_{22}
		\end{pmatrix}
		\text{ and }
		B =
		\begin{pmatrix}
			B_{11} & A_{12} \\
			A_{21} & A_{22}
		\end{pmatrix},
	\end{displaymath}
	where $A_{22}$ is a $(n-k) \times (n-k)$ matrix.
	If $x \in [\lambda_{i+1}(A_{22}),\lambda_i(A_{22}))$,
	then $\lambda_{k+i+1}(A) \le \lambda_{i+1}(A_{22})$ and
	$\lambda_i(A_{22}) \le \lambda_i(A)$ by the Cauchy interlacing law
	(Theorem \ref{thm:cauchy_inter}), and so
	\begin{displaymath}
		\frac{n-k-i}{n} \le F_A(x) \le \frac{n-i}{n}.
	\end{displaymath}
	By the same reasoning, we also have
	\begin{displaymath}
		\frac{n-k-i}{n} \le F_B(x) \le \frac{n-i}{n},
	\end{displaymath}
	and so
	\begin{displaymath}
		|F_A(x) - F_B(x)| \le \frac{k}{n} = \frac{\rank(A-B)}{n}.
	\end{displaymath}
	Even if $x < \lambda_{n-k}(A_{22})$ or $x \ge \lambda_1(A_{22})$,
	this inequality can be proved by a similar argument.
	Now the desired inequality follows since $x$ is arbitrary.
\end{proof}

The following is a generalization of Theorem \ref{thm:rank_ineq}.
\begin{corollary}
	\label{cor:bv_rank_ineq}
	If $A$ and $B$ are $n \times n$ Hermitian matrices and $f:\R \to \R$
	satisfies $\tvnorm{f} \le 1$, then
	\begin{displaymath}
		\left|\int_\R f \,d\mu_A - \int_\R f \,d\mu_B\right|
		\le \frac{\rank(A-B)}{n}.
	\end{displaymath}
\end{corollary}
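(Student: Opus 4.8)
The plan is to reduce Corollary \ref{cor:bv_rank_ineq} to the rank inequality (Theorem \ref{thm:rank_ineq}) via integration by parts, exploiting that a function of bounded variation with $\tvnorm{f}\le1$ can be written as an average (in the signed-measure sense) of indicator-type functions whose integrals against $\mu_A-\mu_B$ are controlled by $\infnorm{F_A-F_B}$. Concretely, first I would recall that $\tvnorm{f}\le1$ forces $f$ to be bounded and to differ from a constant by a function that is the distribution function of a signed measure $\nu$ of total mass at most $1$; more precisely, after adjusting by an additive constant (which does not affect the difference of integrals, since $\mu_A$ and $\mu_B$ are both probability measures) we may assume $f(x)=\nu((-\infty,x])$ for a signed Borel measure $\nu$ with $|\nu|(\R)\le1$. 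One should be slightly careful about one-sided limits / the precise convention for total variation, but since $\mu_A$ and $\mu_B$ are purely atomic with finitely many atoms, the value of $f$ at countably many points is irrelevant and no subtlety survives.

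Next I would apply Fubini/Tonelli. Writing $f(x)=\int_{\R}\I(y\le x)\,\nu(dy)$ (plus a constant), we get
\[
\int_\R f\,d\mu_A-\int_\R f\,d\mu_B
=\int_\R\!\!\int_\R\I(y\le x)\,\nu(dy)\,(\mu_A-\mu_B)(dx)
=\int_\R\bigl(F_A(y^-)-F_B(y^-)\bigr)'\,\text{\small(reversed)}\,\nu(dy),
\]
where after swapping the order of integration the inner integral over $x$ of $\I(y\le x)$ against $\mu_A-\mu_B$ equals $\bigl(\mu_A((-\infty,\infty))-F_A(y^-)\bigr)-\bigl(1-F_B(y^-)\bigr)=F_B(y^-)-F_A(y^-)$. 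Thus the left side equals $\int_\R\bigl(F_B(y^-)-F_A(y^-)\bigr)\,\nu(dy)$ in absolute value at most $\infnorm{F_A-F_B}\cdot|\nu|(\R)\le\infnorm{F_A-F_B}$. Finally, invoke Theorem \ref{thm:rank_ineq} to bound $\infnorm{F_A-F_B}\le\rank(A-B)/n$, which gives the claim.

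The main obstacle I anticipate is purely bookkeeping rather than conceptual: pinning down the correct definition of $\tvnorm{f}$ that the paper intends (pointwise total variation over all partitions, presumably normalized so that a monotone function from $0$ to $1$ has total variation $1$) and verifying that such an $f$ indeed arises from a signed measure of total variation $\le1$ — this is essentially the Jordan decomposition together with the observation that $\sup f-\inf f\le\tvnorm{f}$ handles the boundedness. One also needs the left-continuous versus right-continuous version of $F_A$ to match the representation $f(x)=\nu((-\infty,x])$, but again the atomic nature of $\mu_A,\mu_B$ means any two versions agree $\mu_A$-a.e.\ and $\mu_B$-a.e., so the difference of integrals is unaffected. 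Once these conventions are fixed, the Fubini swap and the final appeal to the rank inequality are immediate, so I would keep the written proof short: state the signed-measure representation, do the one-line Fubini computation, and cite Theorem \ref{thm:rank_ineq}.
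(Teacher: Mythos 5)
Your core computation is sound and is essentially the paper's own argument in dual form: the paper interpolates $f$ at the eigenvalues by a piecewise-linear $g$ and integrates by parts against $F_A-F_B$, while you represent $f$ by a signed measure $\nu$ and apply Fubini to get $\int_\R\bigl(F_B(y^-)-F_A(y^-)\bigr)\,\nu(dy)$; both reduce to $\infnorm{F_A-F_B}\le\rank(A-B)/n$ from Theorem \ref{thm:rank_ineq}, and your bound $|\nu|(\R)\le\tvnorm{f}$ is the same bookkeeping as the paper's $\int_\R|g'|\,dt\le 1$.

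However, the way you dispose of the right-continuity issue contains a genuine error. You argue that because $\mu_A$ and $\mu_B$ have only finitely many atoms, ``the value of $f$ at countably many points is irrelevant'' and ``any two versions agree $\mu_A$-a.e.'' This is exactly backwards: since $\mu_A$ and $\mu_B$ are \emph{purely atomic}, the finitely many atoms (the eigenvalues) are the only points whose values of $f$ matter, and a countable set can carry full $\mu_A$-mass. Replacing $f$ by a right-continuous modification can change $\int_\R f\,d\mu_A$ outright: take $f=\tfrac12\I_{\{0\}}$ (so $\tvnorm{f}\le1$) and $A=0$; then $\int_\R f\,d\mu_A=\tfrac12$, while the right-continuous regularization $f(x^+)\equiv0$ integrates to $0$. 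So the reduction ``we may assume $f(x)=\nu((-\infty,x])$ up to a constant'' is not justified as written, and for a general BV $f$ (which need not agree with either one-sided limit at its discontinuities) the conclusion would not follow from your argument. The repair is easy and is precisely what the paper does: replace $f$ by a function that agrees with $f$ \emph{at the finitely many eigenvalues} $t_1<\cdots<t_m$ of $A$ and $B$ — e.g.\ the right-continuous step function equal to $f(t_i)$ on $[t_i,t_{i+1})$, or the paper's continuous piecewise-linear interpolant — whose total variation is $\sum_i|f(t_{i+1})-f(t_i)|\le\tvnorm{f}$ and which leaves both integrals unchanged; after that substitution your signed-measure/Fubini computation and the appeal to Theorem \ref{thm:rank_ineq} go through verbatim.
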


\begin{proof}
	Let $-\infty < t_1 < \cdots < t_m < \infty$ be such that
	\begin{displaymath}
		\{t_1,\ldots,t_m\} = \{\lambda_1(A),\ldots,\lambda_n(A),
		\lambda_1(B),\ldots,\lambda_n(B)\}.
	\end{displaymath}
	Define $g:\R \to \R$ by letting $g(t_i)=f(t_i)$ for each $i=1,\ldots,m$,
	extending linearly between $t_i$ and $t_{i+1}$ for each $i=1,\ldots,m-1$,
	and setting to constants on $(-\infty,t_1]$ and $[t_m,\infty)$.
	Note that $g':\R \to \R$ exists as an integrable function, and we have
	\begin{displaymath}
		f(t_i) = f(t_m) - \int_{t_i}^\infty g'(t)\,dt
	\end{displaymath}
	for any $i=1,\ldots,m$.
	Also, as $\tvnorm{f} \le 1$, we have
	\begin{displaymath}
		\int_\R |g'(t)| \,dt = \sum_{i=1}^{n-1} |f(t_{i+1}) - f(t_i)| \le 1.
	\end{displaymath}
	Observe that
	\[\begin{split}
		\int_\R f \,d\mu_A &= \sum_{i=1}^n f(\lambda_i(A)) \\
		&= nf(t_m) - \sum_{i=1}^n \int_{\lambda_i(A)}^\infty g'(t)\,dt \\
		&= nf(t_m) - \int_\R g'(t)F_A(t) \,dt.
	\end{split}\]
	Similarly we have
	\begin{displaymath}
		\int_\R f \,d\mu_B = nf(t_m) - \int_\R g'(t)F_B(t)\,dt,
	\end{displaymath}
	and so
	\[\begin{split}
		\left| \int_\R f\,d\mu_A - \int_\R f\,d\mu_B \right|
		&\le \int_\R |g'(t)|\,dt \norm{F_A - F_B}_\infty \\
		&\le \frac{\rank(A-B)}{n}
	\end{split}\]
	by Theorem \ref{thm:rank_ineq}.
\end{proof}

\subsection{Random Hermitian matrices}
\label{subsec:rand_hermitian}
\begin{definition}[Random Hermitian matrices]
	\label{def:rand_hermitian}
	Let $H_n$ denote the space of all $n \times n$ Hermitian matrices.
	We equip $H_n$ with the standard Euclidean metric (and so the metric
	topology and the Borel $\sigma$-algebra) by identifying $H_n$ with
	$C^{n(n-1)/2}\times\R^n$, which is thought to represent the lower triangle
	of an $n \times n$ Hermitian matrix.
	A random element of $H_n$ is called a \emph{random $n \times n$
	Hermitian matrix}.
\end{definition}

From Hoffman-Wielandt inequality (Theorem \ref{thm:hof-wie_ineq}), it follows
that the map $\lambda: H_n \to \R^n$ given by
$\lambda(A) := (\lambda_1(A),\ldots,\lambda_n(A))$ is continuous.
This fact combined with the following lemma shows that the
spectral distribution of a random Hermitian matrix is measurable.

\begin{lemma}
	\label{lem:esd_conti}
	The map $\R^n \to \Pr(\R)$ given by
	\[ (x_1,\ldots,x_n) \mapsto \frac{1}{n}\sum_{i=1}^n \delta_{x_i} \]
	is continuous (and so is measurable).
\end{lemma}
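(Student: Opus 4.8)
The plan is to unwind the definition of the topology on $\Pr(\R)$ and invoke the universal property of initial (``weak'') topologies. Recall that $\Pr(\R)$ carries the smallest topology making $\mu \mapsto \int_\R f\,d\mu$ continuous for every continuous bounded $f\colon\R\to\R$. Consequently, for any topological space $X$, a map $\Phi\colon X \to \Pr(\R)$ is continuous if and only if $x \mapsto \int_\R f\,d\Phi(x)$ is continuous $X\to\R$ for every continuous bounded $f$. So it suffices to verify this last condition for the specific map $\Phi\colon\R^n\to\Pr(\R)$, $\Phi(x_1,\ldots,x_n) = \frac{1}{n}\sum_{i=1}^n \delta_{x_i}$.

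Fix a continuous bounded $f\colon\R\to\R$. Then
\[ \int_\R f\,d\Phi(x_1,\ldots,x_n) = \frac{1}{n}\sum_{i=1}^n f(x_i), \]
which is a finite linear combination of the maps $(x_1,\ldots,x_n)\mapsto f(x_i)$, each being the composition of a coordinate projection $\R^n\to\R$ with $f$ and hence continuous. Therefore the left-hand side is continuous in $(x_1,\ldots,x_n)$, and by the criterion above $\Phi$ is continuous. Measurability then follows immediately, since a continuous map between topological spaces, each equipped with its Borel $\sigma$-algebra, is Borel measurable.

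I expect essentially no genuine obstacle; the only point requiring a moment's care is the correct direction of the universal property (continuity into an initial topology may be tested against the generating family of maps). Should one wish to avoid that abstract fact, one can argue by hand: the basic open subsets of $\Pr(\R)$ are finite intersections of sets $\{\mu : \int_\R f\,d\mu \in W\}$ with $W\subset\R$ open and $f$ continuous bounded, and
\[ \Phi^{-1}\bigl(\{\mu : \textstyle\int_\R f\,d\mu\in W\}\bigr)
= \Bigl\{(x_1,\ldots,x_n) : \tfrac{1}{n}\textstyle\sum_{i=1}^n f(x_i)\in W\Bigr\}, \]
which is open because $(x_1,\ldots,x_n)\mapsto\frac{1}{n}\sum_{i=1}^n f(x_i)$ is continuous; taking finite intersections and then arbitrary unions shows $\Phi$ is continuous.
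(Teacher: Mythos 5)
Your proof is correct and follows essentially the same route as the paper: verify that $(x_1,\ldots,x_n)\mapsto\frac{1}{n}\sum_{i=1}^n f(x_i)$ is continuous for each continuous bounded $f$ and conclude continuity from the definition of the weak topology on $\Pr(\R)$ (the universal property of the initial topology, which you additionally spell out by hand). The extra subbasis argument is fine but not needed beyond what the paper already invokes.
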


\begin{proof}
	For any continuous bounded $f:\R \to \R$, the map
	$(x_1,\ldots,x_n) \in \R^n \mapsto \int_\R f \,d(\frac{1}{n}
	\sum_{i=1}^n \delta_{x_i}) = \frac{1}{n}\sum_{i=1}^n f(x_i)$ is continuous.
	Thus the given map is continuous by the definition of the topology
	of weak convergence.
\end{proof}

The following is a ``random version" of Corollary \ref{cor:perturb_frob_norm}.
If $X$ is a random $n \times n$ Hermitian matrix, we let
$\E F_X$ be the distribution function of $\E\mu_X$,
i.e. $(\E F_X)(x) = \E[F_X(x)]$.

\begin{corollary}
	\label{cor:pertrub_frob_norm_exp}
	If $X$ and $Y$ are random $n \times n$ Hermitian matrices, then
	\begin{displaymath}
		[L(\E F_X, \E F_Y)]^3
		\le \frac{1}{n}\E \bigl[\fnorm{X - Y}^2\bigr].
	\end{displaymath}
	(For the definition of $\fnorm{\cdot}$, see Definition \ref{def:fnorm}.)
\end{corollary}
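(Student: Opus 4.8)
The plan is to imitate the proof of the deterministic statement, Corollary~\ref{cor:perturb_frob_norm}, and then pass to expectations. We may assume $\E\bigl[\fnorm{X-Y}^2\bigr]<\infty$, since otherwise the right-hand side is $+\infty$ and there is nothing to prove. Recall that $\E\mu_X$ and $\E\mu_Y$ exist by Theorem~\ref{thm:expect_prob} (the maps $A\mapsto\mu_A$ are measurable by Lemma~\ref{lem:esd_conti} together with the continuity of $\lambda$), and that by definition $(\E F_X)(x)=\E[F_X(x)]$ and $(\E F_Y)(x)=\E[F_Y(x)]$.

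First I would revisit the proof of Corollary~\ref{cor:perturb_frob_norm}: there it is shown that for \emph{any} $n\times n$ Hermitian $A,B$ and \emph{any} $x\in\R$, $\eps>0$,
\[ F_A(x)\le F_B(x+\eps)+\frac{1}{\eps^2 n}\fnorm{A-B}^2 \qquad\text{and}\qquad F_A(x)\ge F_B(x-\eps)-\frac{1}{\eps^2 n}\fnorm{A-B}^2. \]
Applying this with $A=X(\omega)$, $B=Y(\omega)$ for each $\omega$ and taking expectations — legitimate term by term, since $\omega\mapsto\fnorm{X(\omega)-Y(\omega)}^2$ and $\omega\mapsto F_{X(\omega)}(x)$ are measurable and the inequalities hold surely for the given $x,\eps$ — I get
\[ (\E F_X)(x)\le(\E F_Y)(x+\eps)+\frac{1}{\eps^2 n}\E\bigl[\fnorm{X-Y}^2\bigr] \qquad\text{and}\qquad (\E F_X)(x)\ge(\E F_Y)(x-\eps)-\frac{1}{\eps^2 n}\E\bigl[\fnorm{X-Y}^2\bigr] \]
for every $x\in\R$ and $\eps>0$.

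Finally I would optimize in $\eps$. Fix $\delta>0$ and take $\eps>0$ with $\eps^3=\frac1n\bigl(\E[\fnorm{X-Y}^2]+\delta\bigr)$; then $\frac{1}{\eps^2n}\E[\fnorm{X-Y}^2]\le\eps$, so the two displayed inequalities give $(\E F_Y)(x-\eps)-\eps\le(\E F_X)(x)\le(\E F_Y)(x+\eps)+\eps$ for all $x$, whence $L(\E F_X,\E F_Y)\le\eps$ by the definition of the L\'evy metric (and its symmetry). Cubing and letting $\delta\downarrow0$ yields the claim. The $\delta$ is only there to cover the degenerate case $\E[\fnorm{X-Y}^2]=0$ (i.e.\ $X=Y$ a.s.), where one may instead observe directly that $\E F_X=\E F_Y$. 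I do not anticipate any real obstacle here: the argument is just ``take expectations of an almost-sure inequality,'' and the only points that need a line of care are the measurability remarks above, the fact that the deterministic bounds hold uniformly in $x$ and $\eps$ (so that the expectation may be taken before the optimization), and the handling of the degenerate case.
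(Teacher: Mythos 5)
Your proposal is correct and follows essentially the same route as the paper: apply the pointwise bounds \eqref{eq:perturb_frob_1} and \eqref{eq:perturb_frob_2} to $X$ and $Y$, take expectations, and then choose $\eps$ with $\eps^3$ proportional to $\frac{1}{n}\E\bigl[\fnorm{X-Y}^2\bigr]$. Your extra $\delta$ to handle the degenerate case $\E\bigl[\fnorm{X-Y}^2\bigr]=0$ is a small refinement the paper glosses over, but it changes nothing essential.
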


\begin{proof}
	If $\E \bigl[\fnorm{X-Y}^2\bigr] = \infty$, there is nothing to prove;
	so we may assume $\E \bigl[\fnorm{X-Y}^2\bigr] < \infty$.
	By applying \eqref{eq:perturb_frob_1} and \eqref{eq:perturb_frob_2}
	to $X$ and $Y$, and taking the expectation, we have
	\begin{multline*}
		\E [F_Y(x-\eps)] - \frac{1}{\eps^2 n} \E \bigl[\fnorm{X-Y}^2\bigr]
		\le \E[F_X(x)] \\
		\le \E[F_Y(x+\eps)] + \frac{1}{\eps^2 n} \E\bigl[\fnorm{X-Y}^2\bigr]
	\end{multline*}
	for all $x \in \R$ and $\eps > 0$.
	As in the proof of Corollary \ref{cor:perturb_frob_norm},
	let $\eps > 0$ be such that
	$\eps^3 := \frac{1}{n}\E\bigl[\fnorm{X-Y}^2\bigr]$.
	Since
	\begin{displaymath}
		\E [F_Y(x-\eps)]-\eps \le \E [F_X(x)] \le \E [F_Y(x+\eps)]+\eps
	\end{displaymath}
	for all $x \in \R$, we have $L(\E F_X, \E F_Y) \le \eps$,
	and thus the desired inequality holds.
\end{proof}

\section{Concentration of measure}
\label{sec:concen_meas}
\begin{lemma}[Hoeffding's lemma]
	\label{lem:hoeffding_lemma}
	Let $-\infty < a < b < \infty$.
	If $X$ is a $[a,b]$-valued random variable, then
	\[ \E\bigl[e^{X-\E[X]}\bigr] \le \exp\bigl(2(b-a)^2\bigr). \]
\end{lemma}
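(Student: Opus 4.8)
The plan is to run the classical convexity argument behind Hoeffding's lemma; since the target constant $2(b-a)^2$ is far larger than the sharp $\tfrac18(b-a)^2$, there is plenty of slack. Write $h := b-a > 0$ and $p := \frac{\E[X]-a}{b-a}$, which lies in $[0,1]$ because $X$ is $[a,b]$-valued (so in particular $a \le \E[X] \le b$). First I would use convexity of $t \mapsto e^t$: for $x \in [a,b]$ one has $x = \frac{b-x}{b-a}\,a + \frac{x-a}{b-a}\,b$, hence $e^x \le \frac{b-x}{b-a}e^a + \frac{x-a}{b-a}e^b$. Taking expectations, dividing by $e^{\E[X]}$, and using $a - \E[X] = -ph$ and $b-\E[X] = (1-p)h$, this gives
\[
	\E\bigl[e^{X-\E[X]}\bigr] \le (1-p)e^{-ph} + p\,e^{(1-p)h}.
\]
So it now suffices to bound the right-hand side by $e^{2h^2}$, a statement no longer involving $X$.

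Next I would set $\phi(h) := \log\bigl((1-p)e^{-ph} + p e^{(1-p)h}\bigr) = -ph + \log\bigl(1-p+p e^{h}\bigr)$ and show $\phi(h) \le \tfrac{h^2}{8}$ via Taylor's theorem with Lagrange remainder. One computes $\phi(0) = 0$ and, writing $g(h) := \frac{p e^{h}}{1-p+p e^{h}} \in [0,1]$, that $\phi'(h) = g(h) - p$, so $\phi'(0) = 0$, and $\phi''(h) = g'(h) = g(h)\bigl(1-g(h)\bigr) \le \tfrac14$. Hence $\phi(h) = \tfrac{h^2}{2}\phi''(\xi) \le \tfrac{h^2}{8}$ for some $\xi \in (0,h)$, and since $\tfrac{h^2}{8} \le 2h^2$ we conclude $\E[e^{X-\E[X]}] \le e^{\phi(h)} \le e^{2(b-a)^2}$.

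The only step requiring any care is the identity $\phi'' = g(1-g)$ and the resulting bound $\phi'' \le \tfrac14$; the rest is a routine differentiation, and the degenerate cases $p \in \{0,1\}$ (i.e.\ $X$ a.s.\ constant) are covered automatically. Because of the generous constant, a fully elementary alternative is also available and could be presented instead: after centering, $Y := X - \E[X]$ satisfies $|Y| \le h$ and $\E[Y] = 0$, and the termwise bound $e^{Y} \le 1 + Y + \tfrac12 Y^2 e^{h}$ (using $(j+2)! \ge 2\,j!$) gives $\E[e^{Y}] \le 1 + \tfrac12 h^2 e^{h} \le e^{h^2 e^h/2}$; one then splits into the cases $h \le 1$ (where $e^{h} < 4$, so $\tfrac12 h^2 e^h \le 2h^2$) and $h \ge 1$ (where instead $e^{Y} \le e^{|Y|} \le e^{h}$ and $h \le 2h^2$). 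I would present the convexity argument as the main one.
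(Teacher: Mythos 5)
Your main argument is correct, and it is genuinely different from the paper's. You run the classical Hoeffding argument: secant-line convexity to reduce the problem to bounding the scalar quantity $(1-p)e^{-ph}+pe^{(1-p)h}$, then the second-derivative bound $\phi''=g(1-g)\le\tfrac14$ on the log of that expression, giving the \emph{sharp} constant $\E[e^{X-\E[X]}]\le e^{(b-a)^2/8}$, of which the claimed $e^{2(b-a)^2}$ is a trivial weakening. The paper instead exploits the generous constant from the start: after centering so that $\E X=0$ (hence $a\le 0\le b$), it applies the pointwise second-order Taylor bound $e^x\le 1+x+(e^{b-a}/2)x^2$ for $x\in[a,b]$, uses $\E[X^2]\le(b-a)^2$ to get $\E[e^X]\le 1+(e^{b-a}/2)(b-a)^2$, and then splits on $b-a<1$ versus $b-a\ge 1$ to absorb the prefactor $e^{b-a}$ into $\exp(2(b-a)^2)$. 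Your argument buys the optimal constant at the cost of differentiating the cumulant generating function and invoking Taylor with Lagrange remainder; the paper's argument is cruder but purely elementary, avoiding any calculus beyond a one-line Taylor estimate, and is entirely adequate since only a crude constant is needed downstream (it feeds McDiarmid's inequality). Your ``fully elementary alternative'' at the end is essentially the paper's proof, differing only cosmetically in how the quadratic remainder and the case split are packaged. Both of your routes are sound; your handling of the degenerate cases $p\in\{0,1\}$ and the identity $\phi''=g(1-g)$ check out.
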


\begin{proof}
	We may assume $\E[X]=0$.
	Note that $a \le 0 \le b$.
	For any $x \in [a,b]$,
	\[e^x = 1 + x + (e^c/2)x^2 \qquad \text{for some $c \in [a,b]$,}\]
	and so
	\[e^x \le 1+x+(e^{b-a}/2)x^2.\]
	Since $\E[X]=0$ and $\E[X^2]\le(b-a)^2$, we have
	\[\E[e^X] \le 1+\E[X]+(e^{b-a}/2)\E[X^2]
	\le 1+\bigl((b-a)^2/2\bigr) e^{b-a}.\]
	If $b-a < 1$, then
	\[\E[e^X] \le 1+(e/2)(b-a)^2 \le \exp\bigl((e/2)(b-a)^2\bigr).\]
	If $b-a \ge 1$, then
	\begin{multline*}
		\E[e^X]\le\bigl(1+(b-a)^2/2\bigr)e^{b-a} \\
		\le \exp\bigl((b-a)^2/2\bigr)e^{(b-a)^2} = \exp\bigl((3/2)(b-a)^2\bigr).
	\end{multline*}
	In any case, we have $\E[e^X]\le\exp\bigl(2(b-a)^2\bigr)$.
\end{proof}

\begin{theorem}[McDiarmid's inequality]
	\label{thm:mcdiarmid_ineq}
	Let $S_1,\ldots,S_n$ be measurable spaces, and
	$F \colon S_1\times\cdots\times S_n \to \R$ be a bounded
	measurable function.
	Assume that
	\[\bigl|F(x_1,\ldots,x_n) - F(x_1,\ldots,x_{i-1},x'_i,x_{i+1},\ldots,x_n)
	\bigr| \le c_i\]
	for any $i \in \{1,\ldots,n\}$, $x_j \in S_j$ for each $j\in\{1,\ldots,n\}$,
	and $x'_i \in S_i$, where $c_i > 0$ doesn't depend on $x_1,\ldots,x_n,x'_i$.
	If $X_1,\ldots,X_n$ are independent random elements of $S_1,\ldots,S_n$,
	then
	\[ \p\bigl(\bigl|F(X_1,\ldots,X_n)-\E\bigl[F(X_1,\ldots,X_n)\bigr]\bigr|
	\ge \lambda\sigma\bigr) \le 2\exp(-\lambda^2/8) \]
	for any $\lambda > 0$ where $\sigma = \sqrt{c_1^2+\cdots+c_n^2}$.
\end{theorem}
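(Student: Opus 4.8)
The plan is to use the method of bounded differences via a Doob martingale, feeding the increments into the conditional form of Hoeffding's lemma (Lemma \ref{lem:hoeffding_lemma}) and then running a Chernoff-type argument. Write $\mathcal{F}_i := \sigma(X_1,\ldots,X_i)$ for $i=0,\ldots,n$, and set $Y_i := \E[F(X_1,\ldots,X_n)\mid\mathcal{F}_i]$, so that $Y_0 = \E[F(X_1,\ldots,X_n)]$ is a constant and $Y_n = F(X_1,\ldots,X_n)$. Then $F(X_1,\ldots,X_n) - \E[F(X_1,\ldots,X_n)] = \sum_{i=1}^n D_i$ with $D_i := Y_i - Y_{i-1}$, and $\E[D_i\mid\mathcal{F}_{i-1}] = 0$.

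First I would record the bounded-oscillation estimate for the increments. Because $X_1,\ldots,X_n$ are independent and $F$ is bounded and measurable, $Y_i$ is a version of $g_i(X_1,\ldots,X_i)$, where $g_i$ is obtained from $F$ by integrating out the last $n-i$ coordinates against the joint law of $(X_{i+1},\ldots,X_n)$ (Fubini's theorem makes this legitimate). Freezing the values of $X_1,\ldots,X_{i-1}$, the conditional law of $D_i$ is then the law of $h(X_i)$ for a fixed bounded measurable $h$ depending on those frozen values, with $\E[h(X_i)] = 0$; and the hypothesis $|F(\ldots,x_i,\ldots)-F(\ldots,x_i',\ldots)|\le c_i$ forces $h$ to take values in some interval of length at most $c_i$. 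So, conditionally on $\mathcal{F}_{i-1}$, the variable $D_i$ has mean zero and lives in an interval of width at most $c_i$.

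Next I would run the standard Chernoff-martingale computation. Fix $t>0$. Applying Lemma \ref{lem:hoeffding_lemma} to the random variable $t\,h(X_i)$ (supported in an interval of length at most $tc_i$, mean zero) gives $\E[e^{tD_i}\mid\mathcal{F}_{i-1}] \le \exp(2t^2c_i^2)$ almost surely. Peeling the increments off one at a time, conditioning successively on $\mathcal{F}_{n-1},\mathcal{F}_{n-2},\ldots$ and using that $e^{t\sum_{j<i}D_j}$ is $\mathcal{F}_{i-1}$-measurable, yields
\[ \E\bigl[e^{t(F(X_1,\ldots,X_n)-\E[F(X_1,\ldots,X_n)])}\bigr]
= \E\Bigl[e^{t\sum_{i=1}^n D_i}\Bigr]
\le \exp\Bigl(2t^2\sum_{i=1}^n c_i^2\Bigr) = \exp(2t^2\sigma^2). \]
By Markov's inequality, $\p\bigl(F(X_1,\ldots,X_n)-\E[F(X_1,\ldots,X_n)] \ge \lambda\sigma\bigr) \le \exp(-t\lambda\sigma + 2t^2\sigma^2)$ for every $t>0$; taking $t = \lambda/(4\sigma)$ makes the exponent equal to $-\lambda^2/8$. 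Applying the same estimate to $-F$, whose bounded differences are controlled by the same $c_i$, handles the lower tail, and a union bound supplies the factor $2$, giving the stated inequality. Note that the (deliberately lossy) constant in Lemma \ref{lem:hoeffding_lemma} is exactly calibrated to produce $\exp(-\lambda^2/8)$.

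The step I expect to require the most care is the second one: rigorously justifying that each Doob increment $D_i$ is, conditionally on the past, centered and confined to a window of width $c_i$. This is where independence of the $X_i$ is genuinely used — to represent $Y_i$ as an iterated integral over the later coordinates — and it needs a clean invocation of Fubini/Tonelli together with the existence of the relevant conditional distributions. Once that is in place, the remainder is the routine exponential-moment bookkeeping sketched above.
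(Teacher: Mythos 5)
Your proof is correct and is essentially the argument the paper gives: the paper's induction on $n$, which integrates out the last coordinate to form $G$ and bounds $\E[\exp(t(F-G))\mid x_1,\ldots,x_{n-1}]$ by Hoeffding's lemma, is exactly your Doob-martingale telescoping written without conditional-expectation language, and both proofs finish identically with the moment bound $\exp(2t^2\sigma^2 + t\E[F])$, Markov's inequality at $t=\lambda/(4\sigma)$, and the application to $-F$ for the factor $2$. The only difference is packaging (explicit Fubini-type integrals versus the filtration $\sigma(X_1,\ldots,X_i)$), so no further changes are needed.
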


\begin{proof}
	Let us first show
	\begin{equation}
		\label{eq:mcdiarmid_first}
		\E\Bigl[\exp\bigl(tF(X_1,\ldots,X_n)\bigr)\Bigr]
		\le \exp\Bigl(2t^2\sigma^2 + t\E\bigl[F(X_1,\ldots,X_n)\bigr]\Bigr)
	\end{equation}
	for any $t > 0$ by induction on $n$.
	If $n = 0$, in which case $S_1 \times \cdots \times S_n$ is a singleton,
	$F$ is essentially a constant, and $\sigma = 0$, there is nothing to prove.
	We now proceed by induction on $n$.
	
	Note that we may assume that each $X_i$ is the projection
	$\pi_i\colon S_1 \times \cdots \times S_n \to S_i$.
	Let $\mu$ and $\mu_n$ be the distributions of
	$(X_1,\ldots,X_{n-1})$ and $X_n$.
	Let $G:S_1 \times \cdots \times S_{n-1} \to \R$ be defined by
	\[G(x_1,\ldots,x_{n-1}):=\int_{S_n} F(x_1,\ldots,x_{n-1},y) \,\mu_n(dy).\]
	For any $i \in \{1,\ldots,n-1\}$, $x_j \in S_j$ for each
	$j \in \{1,\ldots,n-1\}$, and $x'_i \in S_i$, we have
	\begin{equation*}
		\begin{split}
			\bigl|G(x_1,\ldots,x_{n-1})&-G(x_1,\ldots,x_{i-1},x'_i,x_{i+1},
			\ldots,x_{n-1})\bigr| \\
			&\le \int_{S_n}\bigl|F(x_1,\ldots,x_{n-1},y)\\
			&\qquad -F(x_1,\ldots,x_{i-1},x'_i,x_{i+1},\ldots,x_{n-1},y)\bigr|
			\,\mu_n(dy) \\
			&\le c_i.
		\end{split}
	\end{equation*}
	Since
	\begin{equation*}
		\begin{split}
			\E[&G(X_1,\ldots,X_{n-1})] \\
			&= \int_{S_1\times\cdots\times S_{n-1}}\int_{S_n}
			F(x_1,\ldots,x_{n-1},y)\,\mu_n(dy)\mu(d(x_1,\ldots,x_{n-1})) \\
			&= \E[F(X_1,\ldots,X_n)],
		\end{split}
	\end{equation*}
	the induction hypothesis implies
	\begin{equation}
		\label{eq:mcdiarmid_Gbdd}
		\begin{split}
			\E\Bigl[&\exp\bigl(tG(X_1,\ldots,X_{n-1})\bigr)\Bigr] \\
			&\le \exp\Bigl(2t^2(c_1^2+\cdots+c_{n-1}^2)+
			t\E\bigl[F(X_1,\ldots,X_n)\bigr]\Bigr).
		\end{split}
	\end{equation}
	Define $H_t:S_1 \times \cdots \times S_{n-1} \to \R$ by
	\begin{multline*}
		H_t(x_1,\ldots,x_{n-1}) := \\
		\int_{S_n} \exp\bigl(t\bigl(
		F(x_1,\ldots,x_{n-1},y)-G(x_1,\ldots,x_{n-1})\bigr)\bigr)\,\mu_n(dy).
	\end{multline*}
	Whenever $x_i \in S_i$ is fixed for each $i \in \{1,\ldots,n-1\}$, we have
	\begin{equation}
		\label{eq:mcdiarmid_Hbdd}
		H_t(x_1,\ldots,x_{n-1}) \le \exp(2t^2c_n^2)
	\end{equation}
	by Hoeffding's lemma (Lemma \ref{lem:hoeffding_lemma}).
	Thus, \eqref{eq:mcdiarmid_Hbdd} and \eqref{eq:mcdiarmid_Gbdd} yield
	\begin{equation*}
		\begin{split}
			\E\Bigl[&\exp\bigl(tF(X_1,\ldots,X_n)\bigr)\Bigr]\\
			&=\int_{S_1\times\cdots\times S_{n-1}}
			\exp\bigl(tG(x_1,\ldots,x_{n-1})\bigr) \\
			&\qquad H_t(x_1,\ldots,x_{n-1})
			\,\mu(d(x_1,\ldots,x_{n-1}))\\
			&\le \exp(2t^2c_n^2) \int_{S_1\times\cdots\times S_{n-1}}
			\exp\bigl(tG(x_1,\ldots,x_{n-1})\bigr)\,\mu(d(x_1,\ldots,x_{n-1}))\\
			&\le \exp\Bigl(2t^2\sigma^2+t\E\bigl[F(X_1,\ldots,X_n)\bigr]\Bigr),
		\end{split}
	\end{equation*}
	finishing the proof of \eqref{eq:mcdiarmid_first}.
	
	We can now finish the proof.
	Observe that
	\begin{equation*}
		\begin{split}
			\p\bigl(F(X_1,&\ldots,X_n)-\E\bigl[F(X_1,\ldots,X_n)\bigr]
			\ge \lambda\sigma\bigr)\\
			&\le e^{-t\lambda\sigma} \E\Bigl[\exp\bigl(tF(X_1,\ldots,X_n)-
			t\E\bigl[F(X_1,\ldots,X_n)\bigr]\bigr)\Bigr]\\
			&\le \exp(2t^2\sigma^2-t\lambda\sigma).
		\end{split}
	\end{equation*}
	By some calculus one can find that $t = \lambda/4\sigma$ minimizes
	the right side, yielding
	\[\p\bigl(F(X_1,\ldots,X_n)-\E\bigl[F(X_1,\ldots,X_n)\bigr]
	\ge \lambda\sigma\bigr) \le \exp(-\lambda^2/8).\]
	Applying this result to $-F$, we obtain
	\[\p\bigl(\bigl|F(X_1,\ldots,X_n)-\E\bigl[F(X_1,\ldots,X_n)\bigr]\bigr|
	\ge \lambda\sigma\bigr) \le 2\exp(-\lambda^2/8).\]
\end{proof}

The following inequality was found independently by Guntuboyina
and Leeb \cite{GL09}, and Bordenave, Caputo, and Chafa\"\i{} \cite{BCC11}.
\begin{theorem}[Concentration for spectral measures]
	\label{thm:concen_spec}
	Let $X$ be a random $n \times n$ Hermitian matrix whose
	rows of the lower triangle are jointly independent.
	If $f:\R \to \R$ satisfies $\tvnorm{f} \le 1$, and $t > 0$, then
	\[\p\biggl( \biggl| \int_\R f\,d\mu_X - \E\int_\R f\,d\mu_X \biggr| \ge t
	\biggr) \le 2\exp(-nt^2/32).\]
\end{theorem}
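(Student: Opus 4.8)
The plan is to apply McDiarmid's inequality (Theorem \ref{thm:mcdiarmid_ineq}), taking the independent coordinates to be the rows of the lower triangle of $X$, and to control the bounded-difference constants with the rank inequality Corollary \ref{cor:bv_rank_ineq}.

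First I would set up the bookkeeping. For $i=1,\dots,n$ let $S_i:=\C^{i-1}\times\R$, regarded as the space of possible $i$-th rows $(X_{i1},\dots,X_{i,i-1},X_{ii})$ of the lower triangle of an $n\times n$ Hermitian matrix (the diagonal entry being real), and let $M\colon S_1\times\cdots\times S_n\to H_n$ be the $\R$-linear, hence continuous, map that assembles a Hermitian matrix from such rows. Writing $X_i$ for the $i$-th row of the lower triangle of $X$, the hypothesis is exactly that $X_1,\dots,X_n$ are jointly independent, and $X=M(X_1,\dots,X_n)$. Since $\int_\R f\,d\mu_X-\E\int_\R f\,d\mu_X$ is unchanged when $f$ is replaced by $f-f(0)$, and $\tvnorm{f-f(0)}=\tvnorm{f}$, I may assume $f(0)=0$; then $\infnorm{f}\le\tvnorm{f}\le1$, so $f$ is bounded (and Borel, being of bounded variation). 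Define $F\colon S_1\times\cdots\times S_n\to\R$ by $F(r_1,\dots,r_n):=\int_\R f\,d\mu_{M(r_1,\dots,r_n)}$, so that $|F|\le1$ and $F(X_1,\dots,X_n)=\int_\R f\,d\mu_X$. This $F$ is measurable: it is the composition of the continuous map $M$, the continuous map $A\mapsto(\lambda_1(A),\dots,\lambda_n(A))$ (continuity by the Hoffman--Wielandt inequality, Theorem \ref{thm:hof-wie_ineq}), the continuous map $(x_1,\dots,x_n)\mapsto\frac1n\sum_i\delta_{x_i}$ (Lemma \ref{lem:esd_conti}), and the measurable map $\mu\mapsto\int_\R f\,d\mu$ on $\Pr(\R)$ (approximate $f$ uniformly by simple functions and recall that $\mu\mapsto\mu(B)$ is measurable for Borel $B$).

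The main step is the bounded-difference estimate. Fix $i$, rows $r_1,\dots,r_n$, and $r_i'\in S_i$, and put $A:=M(r_1,\dots,r_i,\dots,r_n)$ and $B:=M(r_1,\dots,r_i',\dots,r_n)$. These Hermitian matrices agree at every entry lying in neither row $i$ nor column $i$; consequently every column of $A-B$ lies in the span of the standard basis vector $e_i$ and the $i$-th column of $A-B$, so $\rank(A-B)\le2$. By Corollary \ref{cor:bv_rank_ineq} (applicable because $\tvnorm{f}\le1$),
\[
  |F(r_1,\dots,r_i,\dots,r_n)-F(r_1,\dots,r_i',\dots,r_n)|
  =\Bigl|\int_\R f\,d\mu_A-\int_\R f\,d\mu_B\Bigr|
  \le\frac{\rank(A-B)}{n}\le\frac2n .
\]
Hence McDiarmid's inequality applies with $c_i=2/n$ for every $i$, giving $\sigma:=\sqrt{\sum_{i=1}^n c_i^2}=2/\sqrt n$ and
\[
  \p\biggl(\Bigl|\int_\R f\,d\mu_X-\E\int_\R f\,d\mu_X\Bigr|\ge\frac{2\lambda}{\sqrt n}\biggr)\le2\exp(-\lambda^2/8)\qquad\text{for all }\lambda>0 .
\]
Given $t>0$, taking $\lambda:=t\sqrt n/2$ makes the left-hand side equal to $\p(|\int_\R f\,d\mu_X-\E\int_\R f\,d\mu_X|\ge t)$ and the exponent equal to $\lambda^2/8=nt^2/32$, which is exactly the asserted bound.

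The only genuine content is the observation that altering one row of the lower triangle of a Hermitian matrix perturbs it by a Hermitian matrix of rank at most $2$; once that is combined with Corollary \ref{cor:bv_rank_ineq}, everything else is the routine arithmetic of McDiarmid's constants. The one point that needs a little care is that McDiarmid's inequality is stated for bounded $F$, which is why one normalizes $f$ by subtracting $f(0)$ at the outset.
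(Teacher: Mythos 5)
Your proof is correct and follows essentially the same route as the paper: independent coordinates given by the rows of the lower triangle, the rank-at-most-$2$ observation for a one-row change combined with Corollary \ref{cor:bv_rank_ineq} to get bounded differences $2/n$, then McDiarmid with $\lambda = t\sqrt{n}/2$. Your normalization $f \mapsto f - f(0)$ to ensure boundedness of $F$ is a small point of care the paper leaves implicit, but it does not change the argument.
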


\begin{proof}
	Let $S_i := \C^i$ and $X_i := (X_{i1},\ldots,X_{ii})$
	for each $i=1,\ldots,n$.
	Given $(x_1,\ldots,x_n) \in S_1 \times \cdots \times S_n$,
	let $H(x_1,\ldots,x_n)$ be the Hermitian matrix whose $i$th row of
	the lower triangle is $x_i$ for each $i=1,\ldots,n$.
	Let $(x_1,\ldots,x_n) \in S_1 \times \cdots \times S_n$ and $x_i' \in S_i$.
	If we change a row or a column of a matrix,
	then the change in rank is at most $1$.
	Since $H(x_1,\ldots,x_{i-1},x_i',x_{i+1},\ldots,x_n)$ can be
	obtained from $H(x_1,\ldots,x_n)$ by changing a row and then
	changing a column, the rank of
	\begin{displaymath}
		H(x_1,\ldots,x_n) - H(x_1,\ldots,x_{i-1},x_i',x_{i+1},\ldots,x_n)
	\end{displaymath}
	is at most $2$.
	Thus, Corollary \ref{cor:bv_rank_ineq} tells us that
	\begin{displaymath}
		\left|\int_\R f\,d\mu_{H(x_1,\ldots,x_n)} -
		\int_\R f\,d\mu_{H(x_1,\ldots,x_{i-1},x_i',x_{i+1},\ldots,x_n)}
		\right| \le \frac{2}{n}.
	\end{displaymath}
	Let $X_i$ be the $i$th row of the lower triangle of $X$.
	Then, $X_1,\ldots,X_n$ are independent, and $X = H(X_1,\ldots,X_n)$.
	By applying Theorem \ref{thm:mcdiarmid_ineq} to
	$F:S_1 \times \cdots \times S_n \to \R$ given by
	$F(x_1,\ldots,x_n) := \int_\R f \,\mu_{H(x_1,\ldots,x_n)}$ and
	$X_1,\ldots,X_n$, we obtain
	\begin{displaymath}
		\p\left(\left|\int_\R f\,d\mu_X - \E\int_\R f\,d\mu_X\right|
		\ge \frac{2\lambda}{\sqrt{n}}\right) \le 2\exp(-\lambda^2/8)
	\end{displaymath}
	for any $\lambda > 0$.
	Our desired result follows by letting $\lambda = t\sqrt{n}/2$.
\end{proof}

\section{Reduction to unit variance case}
\label{sec:unit_reduction}
The Stieltjes transform method, which is the topic of the next section,
is able to prove a semicircular law (Theorem \ref{thm:unit_sclaw})
which assumes that every entry of $W_n$ has variance excatly $1/n$.
However, it seems not so easy to reduce Theorem \ref{thm:sclaw} itself
to the case the Stieltjes transform can handle.
This section provides an alternative semicircular law, which is somewhat
weaker than \ref{thm:sclaw}, that can still be reduced to what
the Stieltjes transform can handle.
If you're satisfied by the reduced version (Theorem \ref{thm:unit_sclaw}),
feel free to skip to Section \ref{sec:stieltjes}.
Otherwise, the following is the alternative semicirular law.
It was pointed out in the Remark following Theorem \ref{thm:sclaw}
as a special case of Theorem \ref{thm:sclaw}.

\begin{theorem}[A semicircular law]
	\label{thm:alter_sclaw}
	For each $n \in \N$, let $W_n=(\wn)_{i,j=1}^n$ be a random
	$n \times n$ Hermitian matrix whose upper triangular entries are
	jointly independent, have mean zero, and have finite variances.
	We assume that $W_1, W_2, \ldots$ are defined on the same probability space.
	If
	\begin{equation}
		\label{eqn:alter_var}
		\on{ \sum_{i,j=1}^n \biggl| \Varwn - \frac{1}{n} \biggr| }
	\end{equation}
	and
	\begin{equation*}
		\on{ \sum_{i,j=1}^n \lindn } \qquad \text{for every $\eps > 0$,}
	\end{equation*}
	then $\esdn \tto \scdist$ as $n \to \infty$ a.s.
\end{theorem}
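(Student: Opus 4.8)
The plan is to deduce Theorem~\ref{thm:alter_sclaw} directly from the already-established Theorem~\ref{thm:sclaw}, by checking that the two displayed hypotheses here imply the three hypotheses \eqref{eq:sclaw_var}, \eqref{eq:sclaw_rowbdd}, and \eqref{eq:sclaw_lind} there; this is exactly the assertion recorded in the Remark following Theorem~\ref{thm:sclaw}. The structural assumptions---random Hermitian matrices with jointly independent, mean-zero, finite-variance upper-triangular entries, all defined on a common probability space---are identical in the two statements, so nothing is required on that front.

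First I would verify \eqref{eq:sclaw_var}. Applying the triangle inequality to the inner sum over $j$ gives
\[ \frac1n \sum_{i=1}^n \biggl| \sum_{j=1}^n \Bigl( \Varwn - \frac1n \Bigr) \biggr|
\le \frac1n \sum_{i,j=1}^n \biggl| \Varwn - \frac1n \biggr|, \]
and the right-hand side tends to $0$ by \eqref{eqn:alter_var}. Next I would verify \eqref{eq:sclaw_rowbdd} with the choice $C = 1$: for each fixed $i$, using $(x)_+ \le |x|$ together with the triangle inequality,
\[ \biggl( \sum_{j=1}^n \Varwn - 1 \biggr)_+
= \biggl( \sum_{j=1}^n \Bigl( \Varwn - \frac1n \Bigr) \biggr)_+
\le \sum_{j=1}^n \biggl| \Varwn - \frac1n \biggr|, \]
so that $\frac1n \sum_{i=1}^n \bigl( \sum_{j=1}^n \Varwn - 1 \bigr)_+ \le \frac1n \sum_{i,j=1}^n |\Varwn - 1/n| \to 0$, again by \eqref{eqn:alter_var}. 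Finally, the Lindeberg-type condition \eqref{eq:sclaw_lind} is word-for-word the second displayed hypothesis of Theorem~\ref{thm:alter_sclaw}, so there is nothing left to check; Theorem~\ref{thm:sclaw} then yields $\esdn \tto \scdist$ a.s.

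I do not expect any genuine obstacle here: the argument is a short chain of elementary inequalities, and all the substantive content---the reduction steps of Section~\ref{sec:prelim_reductions} and the moment computation of Sections~\ref{sec:trees}--\ref{sec:comp_moments}---already lives inside the proof of Theorem~\ref{thm:sclaw}. The only point that calls for a moment's care is orienting the inequalities correctly when passing from the $\ell^1$-type control in \eqref{eqn:alter_var} to the one-sided row-sum control in \eqref{eq:sclaw_rowbdd}, but $(x)_+ \le |x|$ settles this cleanly.
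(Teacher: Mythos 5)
Your proof is correct, and it is indeed the most economical route: the paper's own Remark following Theorem~\ref{thm:sclaw} already observes that \eqref{eqn:alter_var} implies both \eqref{eq:sclaw_var} and \eqref{eq:sclaw_rowbdd} (with $C=1$), and your two elementary inequalities (triangle inequality for the former, $(x)_+\le|x|$ for the latter) make this precise. However, this is a genuinely different route from the one the paper takes in Section~\ref{sec:unit_reduction}. The paper deliberately does \emph{not} invoke Theorem~\ref{thm:sclaw} here: Sections~\ref{sec:unit_reduction}--\ref{sec:stieltjes} exist to give an independent proof of a semicircular law via the Stieltjes transform method, for completeness and as a contrast to the moment method used in the main body. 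Accordingly, the paper's proof of Theorem~\ref{thm:alter_sclaw} proceeds by extending the probability space to manufacture auxiliary Rademacher-type variables, truncating and centralizing as in Section~\ref{sec:prelim_reductions}, and then replacing/rescaling entries to reduce to the exact-unit-variance Theorem~\ref{thm:unit_sclaw}, which is then proved by resolvent (Stieltjes transform) techniques. Your deduction buys brevity and shows that Theorem~\ref{thm:alter_sclaw} is logically redundant given Theorem~\ref{thm:sclaw}; the paper's longer detour buys a second, method-independent proof, which is the stated purpose of those appendix sections. Both are sound; just be aware that if the goal were a self-contained Stieltjes-transform exposition (as the paper intends), your proof would not serve, since it pulls in the entire moment-method machinery of Sections~\ref{sec:prelim_reductions}--\ref{sec:comp_moments}.
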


\subsection{Extension of the underlying probability space}
Let $(\Omega,\mathcal{B},\p)$ be the probability space on which
$W_1, W_2, \ldots$ are defined.
If $(\Omega',\mathcal{B}',\p')$ is another probability space,
and $T\colon\Omega'\to\Omega$ is a measurable map such that
$\p(A) = \p'(T^{-1}(A))$ for all $A \in \mathcal{B}$,
then the random matrices $W_n \circ T$ satisfy all conditions of
Theorem \ref{thm:alter_sclaw}.
Assume that we proved $\mu_{W_n\circ T} \tto \scdist$ $\p'$-a.s.
Since
\[
\{\,\omega'\in\Omega' \mid \mu_{W_n\circ T(\omega')}\tto\scdist\,\}
= T^{-1}\bigl(\{\,\omega\in\Omega \mid \mu_{W_n(\omega)}\tto\scdist \,\}\bigr),
\]
we will have $\mu_{W_n} \tto \scdist$ $\p$-a.s. if we can show that
\begin{equation}
	\label{eqn:red_unit_meas}
	\{\,\omega\in\Omega \mid \mu_{W_n(\omega)}\tto\scdist \,\} \in \mathcal{B}.
\end{equation}
For any $p,q \in \Q$ with $p < q$, let $f_{p,q}$ be defined as in Theorem
\ref{thm:ch_weak_conv}.
Since $\int_\R f_{p,q}\,d\mu_{X}$ is a real-valued random variable for
any random Hermitian matrix $X$, the event
\[ \biggl\{ \lim_{n\to\infty} \int_\R f_{p,q}\,d\mu_{W_n}
= \int_\R f_{p,q}\,d\scdist \biggr\} \]
is measurable.
So, \eqref{eqn:red_unit_meas} follows from Theorem \ref{thm:ch_weak_conv},
and thus $\mu_{W_n} \tto \scdist$ $\p$-a.s.\ follows.
This shows that we can think that $W_n \circ T$'s and $\Omega'$ are the
given random matrices and the underlying space.
By considering $\Omega'=\Omega\times\{0,1\}^\N$, we may assume that
we have i.i.d.\ random variables $\xi_{ij}^{(n)}$'s,
where $n \in \N$ and $1\le i,j\le n$, independent from
$W_1,W_2,\ldots$, which satisfy
\[ \p(\xi_{ij}^{(n)}=1/\sqrt{n}) = \p(\xi_{ij}^{(n)}=-1/\sqrt{n}) = 1/2. \]

\subsection{Repeating what we already know}
The first three steps of Section \ref{sec:prelim_reductions} (that is,
until centralization) works for our case with a slight change.
Applying those steps, we can now assume the following,
and need to prove $\E\esdn \tto \scdist$.
\begin{enumerate}
	\item
	The upper triangular entries of $W_n$ are jointly independent and
	have mean zero.
	\item
	We have \eqref{eqn:alter_var}.
	\item
	There are $\eta_1,\eta_2,\ldots>0$ such that $|\wn|\le\eta_n$ and
	$\eta_n \to 0$ as $n \to \infty$.
\end{enumerate}

\subsection{Replacing and rescaling}
Let
\[
E^{(n)} := \bigl\{ (i,j) \bigm| 1 \le i,j \le n, i \ne j,
\Varwn \le \tfrac{1}{2n} \bigr\}
\]
and define $W' = (v_{ij}^{(n)})_{i,j=1}^n$ by
\[
v_{ij}^{(n)} := \begin{cases}
		\frac{1}{\bigl(n\Varwn\bigr)^{1/2}}\wn
		& \text{if $(i,j) \notin E^{(n)}$} \\
		\xi_{ij}^{(n)} & \text{if $(i,j) \in E^{(n)}$} \\
	\end{cases}.
\]
Note that
\[\begin{split}
	\frac{1}{n} \sum_{(i,j)\in E^{(n)}}
	\E \bigl[\bigl|\wn - v_{ij}^{(n)}\bigr|^2\bigr]
	&= \frac{1}{n} \sum_{(i,j)\in E^{(n)}} \Bigl( \Varwn + \frac{1}{n} \Bigr)\\
	&\le \frac{3}{2n^2} \bigl|E^{(n)}\bigr|\\
	&\le \frac{3}{n} \sum_{i,j=1}^n \Bigl| \Varwn - \frac{1}{n} \Bigr|
	\to 0
\end{split}\]
as $n \to \infty$.
Since $(a-b)^2 \le |a-b|(a+b) = |a^2-b^2|$ for any $a,b \ge 0$, we also have
\[\begin{split}
	\frac{1}{n} \sum_{(i,j)\notin E^{(n)}}
	\E \bigl[\bigl|&\wn - v_{ij}^{(n)}\bigr|^2\bigr]\\
	&= \frac{1}{n} \sum_{(i,j)\notin E^{(n)}}
	\biggl( 1 - \frac{1}{\bigl(n\Varwn\bigr)^{1/2}} \biggr)^2 \Varwn\\
	&\le \frac{1}{n} \sum_{i,j=1}^n
	\biggl( \bigl(\Varwn\bigr)^{1/2} - \frac{1}{\sqrt{n}} \biggr)^2\\
	&\le \frac{1}{n} \sum_{i,j=1}^n
	\biggl| \Varwn - \frac{1}{n} \biggr| \to 0
	\qquad \text{as $n\to\infty$.}
\end{split}\]
Combining previous two displays, we obtain
\[\on{ \sum_{i,j=1}^{n} \E\bigl[\bigl|\wn - v_{ij}^{(n)}\bigr|^2\bigr] },\]
and so it is enough to show $\E\mu_{W'_n} \tto \scdist$ by
Corollary \ref{cor:pertrub_frob_norm_exp} and Theorem \ref{thm:ch_weak_conv}.

If $i,j \notin E^{(n)}$, then
$\Var\bigl[v_{ij}^{(n)}\bigr] \ge \tfrac{1}{2n}$, and so
\[ |v_{ij}^{(n)}| \le \sqrt{2}|\wn| \le \sqrt{2}\eta_n. \]
As $|v_{ij}^{(n)}| \le 1/\sqrt{n}$ for any $(i,j) \in E^{(n)}$,
by letting $\eta'_n := \eta_n \vee (1/\sqrt{n})$ we have
$\eta'_n \to 0$ and $|v_{ij}^{(n)}| \le \eta'_n$.
Since the upper triangular entries of $W'_n$ are jointly independent
random variables with mean zero and variance $1/n$, we can now assume that
the upper triangular entries of $W_n$ have variance $1/n$, and there are
$\eta_1,\eta_2,\ldots > 0$ such that $\eta_n \to 0$ and $|\wn|\le\eta_n$.

\section{The Stieltjes transform method}
\label{sec:stieltjes}
\begin{theorem}[A unit-variance semicircular law]
	\label{thm:unit_sclaw}
	For each $n \in \N$, let $W_n = (\wn)_{i,j=1}^n$ be a random
	$n \times n$ Hermitian matrix whose upper triangular entries are
	jointly independent random variables with mean zero and variance $1/n$.
	We assume that $W_1, W_2, \ldots$ are defined on the same probability space.
	If there are $\eta_1, \eta_2 , \ldots > 0$ with $|\wn| \le \eta_n$ and
	$\eta_n \to 0$ as $n \to \infty$,
	then $\E\esdn \tto \scdist$ as $n \to \infty$ a.s.
\end{theorem}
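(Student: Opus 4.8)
The plan is to run the resolvent (Stieltjes transform) argument. For $z$ in the open upper half-plane $\C^+ := \{z \in \C : \operatorname{Im} z > 0\}$ and an $n \times n$ Hermitian matrix $A$, write $R_A(z) := (A - zI)^{-1}$ and $s_A(z) := \frac1n \tr R_A(z) = \int_\R \frac{1}{x-z}\,\mu_A(dx)$; recall $\norm{R_A(z)} \le 1/\operatorname{Im} z$ and $\operatorname{Im} s_A(z) > 0$. The semicircle distribution has Stieltjes transform $s(z) := \int_\R \frac{1}{x-z}\,\scdist(dx)$, the unique solution in $\C^+$ of $g^2 + zg + 1 = 0$, equivalently $g = (-z-g)^{-1}$. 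I would first record the Stieltjes continuity theorem (proving it in this section if it is not yet available): for $\nu, \nu_1, \nu_2, \ldots \in \Pr(\R)$, if $\int_\R \frac{1}{x-z}\,\nu_n(dx) \to \int_\R \frac{1}{x-z}\,\nu(dx)$ for every $z \in \C^+$, then $\nu_n \tto \nu$. Applied to the deterministic measures $\E\esdn$, this reduces the theorem to showing $\E s_{W_n}(z) \to s(z)$ for each fixed $z \in \C^+$.

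Next, fix $z \in \C^+$, abbreviate $G := R_{W_n}(z)$, and for each $k \in \{1,\dots,n\}$ let $W_n^{(k)}$ be $W_n$ with its $k$th row and column deleted and $\mathbf{a}_k \in \C^{n-1}$ the $k$th column of $W_n$ with its $k$th entry deleted, so that $\mathbf{a}_k$ is independent of $W_n^{(k)}$. The Schur complement formula gives
\[
G_{kk} = \frac{1}{w_{kk}^{(n)} - z - \mathbf{a}_k^{\ast}(W_n^{(k)} - zI)^{-1}\mathbf{a}_k}
\]
(reordering coordinates so that $k$ comes first). Three estimates drive the argument, all locally uniform in $z$: (a) $|w_{kk}^{(n)}| \le \eta_n \to 0$; (b) \emph{concentration of the quadratic form}: conditionally on $W_n^{(k)}$, since the entries of $\mathbf{a}_k$ are independent, mean zero, of variance $1/n$, and bounded by $\eta_n$, a variance computation gives $\E\bigl|\,\mathbf{a}_k^{\ast}(W_n^{(k)} - zI)^{-1}\mathbf{a}_k - \tfrac1n\tr (W_n^{(k)} - zI)^{-1}\,\bigr|^2 \le C(z)\bigl(\tfrac1n + \eta_n^2\bigr) \to 0$, using $\fnorm{(W_n^{(k)} - zI)^{-1}}^2 \le n/(\operatorname{Im} z)^2$; and (c) $\bigl|\tfrac1n\tr(W_n^{(k)} - zI)^{-1} - s_{W_n}(z)\bigr| \le C(z)/n$ by the Cauchy interlacing law (Theorem \ref{thm:cauchy_inter}) together with the bounded total variation of $x \mapsto (x-z)^{-1}$ (cf. Corollary \ref{cor:bv_rank_ineq}). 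Combining these, $G_{kk} = \bigl(-z - s_{W_n}(z) + \varepsilon_{n,k}\bigr)^{-1}$ with $\E|\varepsilon_{n,k}| \to 0$ uniformly in $k$; averaging over $k$, using $|G_{kk}| \le 1/\operatorname{Im} z$ and $|(-z - s_{W_n}(z))^{-1}| \le 1/\operatorname{Im} z$, and taking expectations yields
\[
\E s_{W_n}(z) = \frac{1}{-z - \E s_{W_n}(z)} + \delta_n(z), \qquad \delta_n(z) \to 0,
\]
where (a key point) $\delta_n(z) \to 0$ locally uniformly on $\C^+$ because the error bounds in (a)--(c) are.

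To conclude I would avoid a hands-on stability analysis of the perturbed fixed-point equation and instead use a normal-families argument. The functions $m_n := \E s_{W_n}$ are analytic on $\C^+$ and uniformly bounded by $1/\operatorname{Im} z$ on compacta, hence form a normal family; let $m_\ast$ be any locally uniform subsequential limit. Passing to the limit in the displayed equation (legitimate since $\delta_n \to 0$ locally uniformly) gives $m_\ast(z)^2 + z m_\ast(z) + 1 = 0$ on $\C^+$, while $\operatorname{Im} m_\ast(z) \ge 0$ as a locally uniform limit of Stieltjes transforms; since for each $z \in \C^+$ this quadratic has exactly one root in $\overline{\C^+}$ (the two roots multiply to $1$ and sum to $-z$, forcing opposite-sign imaginary parts, and a real root is impossible for non-real $z$), we get $m_\ast \equiv s$. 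As every subsequential limit equals $s$, we conclude $\E s_{W_n}(z) \to s(z)$ for all $z \in \C^+$, which by the continuity theorem gives $\E\esdn \tto \scdist$.

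The main obstacle I expect is step (b) together with keeping every error estimate locally uniform in $z$: the quadratic-form fluctuation bound must be proved carefully (the off-diagonal entries are complex, so $\E[(w_{ij}^{(n)})^2]$ need not vanish, though this does not affect the $L^2$ bound), and one must track the $\operatorname{Im} z$-dependence so that the normal-families limit is justified. Everything else — the Schur complement identity, the interlacing comparison, and identifying the limiting quadratic — is routine. Finally, if the almost sure strengthening is wanted, it follows exactly as in the ``convergence in expectation is enough'' step of Section \ref{sec:prelim_reductions}: Theorem \ref{thm:concen_spec} and the Borel--Cantelli lemma give $\int_\R f_{p,q}\,d\esdn - \E\int_\R f_{p,q}\,d\esdn \to 0$ a.s.\ for all rationals $p<q$, which combined with $\E\esdn \tto \scdist$ and Theorem \ref{thm:ch_weak_conv} yields $\esdn \tto \scdist$ a.s.
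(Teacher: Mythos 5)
Your overall route is the paper's own (Section \ref{sec:stieltjes}): Stieltjes continuity, the Schur complement identity for the diagonal resolvent entries, an $L^2$ bound for the quadratic form $\mathbf{a}_k^\ast(W_n^{(k)}-zI)^{-1}\mathbf{a}_k$, and a comparison of the minor's trace with the full trace. There is, however, one genuine gap, at the sentence ``averaging over $k$ \dots\ and taking expectations yields $\E s_{W_n}(z)=\frac{1}{-z-\E s_{W_n}(z)}+\delta_n(z)$.'' What your estimates (a)--(c) give after averaging is the \emph{random} approximate equation $\bigl|s_{W_n}(z)-\frac{1}{-z-s_{W_n}(z)}\bigr|\le(\Im z)^{-2}\,\frac1n\sum_k|\varepsilon_{n,k}|$, so taking expectations only yields $\E s_{W_n}(z)=\E\bigl[\frac{1}{-z-s_{W_n}(z)}\bigr]+o(1)$. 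Your normal-families argument runs on the deterministic functions $m_n=\E s_{W_n}$, so you must pull the expectation through the reciprocal; since $\bigl|\E\frac{1}{-z-s_{W_n}(z)}-\frac{1}{-z-\E s_{W_n}(z)}\bigr|\le(\Im z)^{-2}\,\E\bigl|s_{W_n}(z)-\E s_{W_n}(z)\bigr|$, this needs a concentration estimate for the empirical Stieltjes transform about its mean, which nothing in (a)--(c) supplies. This is exactly the step the paper devotes a subsubsection to: it applies Theorem \ref{thm:concen_spec} to the real and imaginary parts of $x\mapsto 1/(x-z)$ (whose total variations are bounded by a constant depending on $z$) to get $\E\bigl|\frac1n\tr S_{W_n}(z)-\frac1n\E\tr S_{W_n}(z)\bigr|\to0$. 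You invoke Theorem \ref{thm:concen_spec} only at the very end, for an a.s.\ upgrade that the statement of the theorem does not actually require ($\E\esdn$ is deterministic); insert the same estimate into the main chain and the argument closes.

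Two smaller points. Your step (c) is fine and in fact slightly sharper than the paper's: comparing with the matrix obtained by zeroing the $k$th row and column and using the rank inequality (Corollary \ref{cor:bv_rank_ineq}, via Theorem \ref{thm:cauchy_inter}) gives a deterministic $O_z(1/n)$ bound, where the paper uses Hoffman--Wielandt and gets $O_z(\eta_n)$; you also correctly retain the $w_{kk}^{(n)}$ term in the Schur complement. On the other hand, your endgame takes for granted that $s_{\scdist}$ is the unique root of $g^2+zg+1=0$ with $\Im g>0$; that is true but is itself something to prove --- either compute $s_{\scdist}$ directly, or argue as the paper does, taking a vague subsequential limit of $\E\mu_{W_n}$ and identifying it through the inversion formula (Lemma \ref{lem:sc_stie}), which avoids computing $s_{\scdist}$ altogether. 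With the concentration step restored (the essential repair) and this identification justified, your proof is essentially the paper's, with a normal-families finish in place of the paper's pointwise subsequence argument and Theorem \ref{thm:stie_conv}.
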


For the readers who skipped Section \ref{sec:unit_reduction}:
note that the first step in Section \ref{sec:prelim_reductions} lets us
upgrade the conclusion of Theorem \ref{thm:unit_sclaw} to
$\esdn \tto \scdist$ as $n \to \infty$ a.s.

\subsection{Stieltjes transform}

Let $\C_+ := \{z \in \C \mid \Im z > 0\}$.
Weak convergence of probability measures on $\R$ can be coded in terms of
Stieltjes transforms.

\begin{definition}[Stieltjes transform]
	Let $\mu$ be a positive, finite Borel measure on $\R$.
	The \emph{Stieltjes transform} $s_\mu:\C_+ \to \C$ of $\mu$ is given by
	\begin{displaymath}
		s_\mu(z) := \int_\R \frac{1}{x-z} \,\mu(dx).
	\end{displaymath}
\end{definition}

Note that $|1/(x-z)| \le \Im z$ for all $x \in \R$.
So $x \mapsto 1/(x-z)$ is a continuous and bounded function,
and it also follows that $|s_\mu(z)| \le \Im z$.

\begin{theorem}[Stieltjes inversion formula]
	\label{thm:stie_inv}
	If $\mu$ is a positive, finite Borel measure on $\R$,
	then the following hold:
	\begin{enumerate}
		\item
		for any $b > 0$, $a \in \R \mapsto \frac{1}{\pi}\Im s_\mu(a + ib)$ 
		is a nonnegative function with $\int_\R \frac{1}{\pi}
		\Im s_\mu(a+ib)\,da = \mu(\R)$;
		\item
		$\frac{1}{\pi}\Im s_\mu(a+ib) \,da \tto \mu$ as $b \downarrow 0$.
		\item
		$b\Im s_\mu(ib) \to \mu(\R)$ as $b \to \infty$;
	\end{enumerate}
\end{theorem}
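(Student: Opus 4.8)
The plan is to reduce all three claims to the elementary identity
\[ \Im\frac{1}{x-(a+ib)} = \frac{b}{(x-a)^2+b^2}, \qquad x,a\in\R,\ b>0, \]
which shows that $\frac{1}{\pi}\Im s_\mu(a+ib) = \int_\R P_b(a-x)\,\mu(dx)$, where $P_b(y):=\frac{1}{\pi}\,\frac{b}{y^2+b^2}$ is the Poisson (Cauchy) kernel. A single substitution $y=bu$ together with $\int_\R \frac{du}{\pi(1+u^2)}=1$ records the normalization $\int_\R P_b(y)\,dy=1$, and from there each assertion is a routine application of Tonelli's theorem or the dominated convergence theorem.

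For (i): since $P_b\ge 0$, the function $a\mapsto\frac{1}{\pi}\Im s_\mu(a+ib)$ is nonnegative, and as $(a,x)\mapsto P_b(a-x)$ is jointly measurable and nonnegative, Tonelli's theorem gives
\[ \int_\R \frac{1}{\pi}\Im s_\mu(a+ib)\,da = \int_\R\Bigl(\int_\R P_b(a-x)\,da\Bigr)\mu(dx) = \mu(\R). \]
For (iii): rewrite $b\,\Im s_\mu(ib) = \int_\R \frac{b^2}{x^2+b^2}\,\mu(dx)$; the integrand is bounded by $1$ and tends to $1$ pointwise as $b\to\infty$, so dominated convergence (using $\mu(\R)<\infty$) yields the limit $\mu(\R)$.

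Claim (ii) is the substance. I would verify $\int_\R f\,d\nu_b\to\int_\R f\,d\mu$ for every bounded continuous $f\colon\R\to\R$, where $\nu_b(da):=\frac{1}{\pi}\Im s_\mu(a+ib)\,da$. Fubini's theorem — legitimate because $f$ is bounded, $\mu$ is finite, and $P_b$ has total mass $1$ — gives $\int_\R f\,d\nu_b = \int_\R (f*P_b)(x)\,\mu(dx)$ with $(f*P_b)(x)=\int_\R f(x-by)\,\frac{dy}{\pi(1+y^2)}$. For each fixed $x$, continuity of $f$ and dominated convergence give $(f*P_b)(x)\to f(x)$ as $b\downarrow 0$, while $\infnorm{f*P_b}\le\infnorm{f}$ uniformly in $b$; a second dominated-convergence argument (again using $\mu(\R)<\infty$) then gives $\int_\R(f*P_b)\,d\mu\to\int_\R f\,d\mu$.

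The only point requiring care is that in (ii) weak convergence of these finite — not necessarily probability — measures is being tested against the bounded continuous functions, and that the two passages to the limit under the integral sign are valid; both are immediate from $\mu(\R)<\infty$ and the fact that $(P_b)_{b>0}$ is an approximate identity. No compactness or tightness machinery is needed, since part (i) already pins all the $\nu_b$ to the common total mass $\mu(\R)$.
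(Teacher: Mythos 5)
Your proof is correct, and it shares the paper's key computational observation---the Poisson-kernel identity $\frac{1}{\pi}\Im s_\mu(a+ib) = \int_\R P_b(a-x)\,\mu(dx)$---but handles part (ii) by a genuinely different route. The paper first reduces to $\mu(\R)=1$ by renormalization and then goes probabilistic: with $X\sim\mu$ and $C$ a standard Cauchy variable independent of $X$, the right side of that identity is the density of the law of $X+bC$, so (i) is free (a density integrates to one), and (ii) follows because $X+bC\to X$ almost surely, hence in distribution, as $b\downarrow 0$. You instead keep $\mu$ as a general finite measure and argue analytically: Tonelli for (i), and for (ii) you test against a bounded continuous $f$, use Fubini to rewrite $\int f\,d\nu_b = \int (f*P_b)\,d\mu$, and apply the dominated convergence theorem twice---once to get $(f*P_b)(x)\to f(x)$ pointwise from the fact that $(P_b)_{b>0}$ is an approximate identity, and once (using $\mu(\R)<\infty$ and $\infnorm{f*P_b}\le\infnorm{f}$) to pass the limit inside the $\mu$-integral. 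Your version is a bit longer but more self-contained and avoids the normalization step; the paper's is slicker for a reader already comfortable with the Cauchy distribution and convergence in distribution of random variables. Part (iii) is substantially identical in both.
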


\begin{proof}
	If $\mu = 0$, then there is nothing to prove.
	By renormalization, we may, and will, assume $\mu(\R) = 1$.
	Note that
	\begin{equation}
	\label{eq:stieltjes_inversion_1}
		\frac{1}{\pi}\Im s_\mu(a+ib)
		= \int_\R \frac{1}{\pi}\frac{b}{(x-a)^2 + b^2} \,\mu(dx).
	\end{equation}
	Let $X$ be a real-valued random variable with distribution $\mu$,
	and $C$ be a standard Cauchy random variable
	(i.e. the law of $C$ has density $\frac{1}{\pi(x^2+1)}$) independent of $X$.
	Then $X+bC \to X$ as $b \downarrow 0$ a.s., and thus
	in distribution.
	Since the right side of \eqref{eq:stieltjes_inversion_1}
	is the density of the law of $X + bC$, both (i) and (ii) are proved.	
	As
	\begin{displaymath}
		bs_\mu(ib) = \int_\R \frac{b^2}{x^2+b^2}\,\mu(dx) \to \mu(\R)
		\qquad \text{as $b \to \infty$}
	\end{displaymath}
	by dominated convergence, (iii) is also proved.
\end{proof}

\begin{theorem}[Stieltjes continuity theorem]
	\label{thm:stie_conv}
	If $\mu,\mu_1,\mu_2,\ldots$ are Borel probability measures on $\R$,
	then $\mu_n \tto \mu$ if and only if
	$s_{\mu_n}(z) \to s_\mu(z)$ for all $z \in \C_+$.
\end{theorem}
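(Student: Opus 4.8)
The plan is to prove the two implications separately: the forward one is an immediate consequence of weak convergence, while the reverse one rests on a tightness argument together with the injectivity of the Stieltjes transform supplied by the inversion formula.

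For the ``only if'' direction, fix $z \in \C_+$. The map $x \mapsto 1/(x-z)$ is continuous and bounded on $\R$, and so are its real and imaginary parts $x \mapsto (x-\Re z)/|x-z|^2$ and $x \mapsto \Im z/|x-z|^2$. Hence if $\mu_n \tto \mu$, then applying the definition of weak convergence to these two real-valued bounded continuous functions gives $s_{\mu_n}(z) = \int_\R \tfrac{1}{x-z}\,d\mu_n \to \int_\R \tfrac{1}{x-z}\,d\mu = s_\mu(z)$.

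For the ``if'' direction, assume $s_{\mu_n}(z) \to s_\mu(z)$ for every $z \in \C_+$; I first show that $\{\mu_n\}$ is tight. Setting $a=0$ in \eqref{eq:stieltjes_inversion_1} gives $b\Im s_\nu(ib) = \int_\R \tfrac{b^2}{x^2+b^2}\,\nu(dx)$ for any probability measure $\nu$ and any $b>0$, hence $1 - b\Im s_\nu(ib) = \int_\R \tfrac{x^2}{x^2+b^2}\,\nu(dx) \ge \tfrac12\,\nu(\{\,|x|\ge b\,\})$, i.e.\ $\nu(\{\,|x|\ge b\,\}) \le 2\bigl(1 - b\Im s_\nu(ib)\bigr)$. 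Given $\eps>0$, Theorem \ref{thm:stie_inv}(iii) lets me pick $b$ so large that $1 - b\Im s_\mu(ib) < \eps/4$; then, since $s_{\mu_n}(ib)\to s_\mu(ib)$ implies $b\Im s_{\mu_n}(ib) \to b\Im s_\mu(ib)$, there is an $N$ with $1 - b\Im s_{\mu_n}(ib) < \eps/2$, and therefore $\mu_n(\{\,|x|\ge b\,\}) < \eps$, for all $n\ge N$. Enlarging $b$ so as to also absorb the finitely many (individually tight) measures $\mu_1,\ldots,\mu_{N-1}$ gives tightness of the whole family. Now a standard subsequence argument finishes the proof: if $\mu_n \not\tto \mu$, there are a bounded continuous $f\colon\R\to\R$, an $\eps>0$, and a subsequence with $\bigl|\int_\R f\,d\mu_{n_k} - \int_\R f\,d\mu\bigr|\ge\eps$; by tightness and Prokhorov's theorem (or Helly's selection theorem) a further subsequence converges weakly to some probability measure $\nu$, and the already-proved forward direction gives $s_\nu(z) = \lim_j s_{\mu_{n_{k_j}}}(z) = s_\mu(z)$ for all $z\in\C_+$; but then Theorem \ref{thm:stie_inv}(ii), applied to both $\nu$ and $\mu$, forces $\nu = \mu$ by uniqueness of weak limits, contradicting $\int_\R f\,d\nu \ne \int_\R f\,d\mu$. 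Hence $\mu_n \tto \mu$.

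I expect the tightness step to be the only genuine obstacle: one has to extract a tail bound uniform in $n$ out of pointwise convergence of $s_{\mu_n}$ at the isolated points $ib$, and handle the finitely many small-index measures separately. Everything afterwards is the routine ``tightness plus a unique subsequential limit implies convergence'' packaging, together with the injectivity of $\mu \mapsto s_\mu$ that comes for free from Theorem \ref{thm:stie_inv}(ii).
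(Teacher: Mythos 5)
Your proof is correct, and it handles the one delicate point (possible escape of mass to infinity) differently from the paper. The paper never proves tightness: it extracts a \emph{vaguely} convergent subsequence via Helly selection, notes that the real and imaginary parts of $x \mapsto 1/(x-z)$ vanish at infinity so that Stieltjes transforms pass to vague limits, and then lets the inversion formula (Theorem \ref{thm:stie_inv}) identify the vague limit $\nu$ with $\mu$ --- in particular $\nu$ automatically has total mass $1$, so vague convergence of the whole sequence to $\mu$ upgrades to weak convergence. You instead convert the hypothesis at the points $z=ib$ into the quantitative tail bound $\nu(\{\,|x|\ge b\,\}) \le 2\bigl(1-b\Im s_\nu(ib)\bigr)$, deduce tightness of $\{\mu_n\}$ (with the finitely many small indices absorbed separately), and then run the standard Prokhorov-plus-unique-subsequential-limit argument, using the forward direction and the inversion formula for the identification. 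Both arguments are complete; the paper's route is shorter because vague compactness is free and the mass issue is settled implicitly by the transform identity, while yours is more self-contained in the weak topology of probability measures and yields an explicit, reusable tail estimate from the Stieltjes transform at purely imaginary points --- at the modest cost of the extra $\eps/4$--$\eps/2$ bookkeeping and the separate treatment of $\mu_1,\ldots,\mu_{N-1}$.
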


\begin{proof}
	The ``only if" direction follows immediately from the definition
	of weak convergence.
	To show the ``if" direction, assume that $s_{\mu_n}(z) \to s_\mu(z)$
	for all $z \in \C_+$.
	Whenever $n_1 < n_2 < \cdots$ and $\mu_{n_k} \to \nu$ vaguely for some
	finite $\nu$, we have $s_{\mu_{n_k}}(z) \to s_\nu(z)$ for all
	$z \in \C_+$ since $x \mapsto 1/(x-z)$ vanishes at infinity.
	This implies $s_\nu(z) = s_\mu(z)$ for all $z \in \C_+$,
	and thus $\nu = \mu$ by Theorem \ref{thm:stie_inv}.
	As any subsequence of $(\mu_n)_{n \in \N}$ has a vaguely convergent
	further subsequence, it follows that any subsequence of $(\mu_n)_{n \in \N}$ 
	has a further subsequence converging vaguely to $\mu$.
	This shows $\mu_n \to \mu$ vaguely, and so $\mu_n \tto \mu$.
\end{proof}

\subsection{Predecessor comparison}
In the remainder of this section, we will show that
$s_{\E \mu_{W_n}}(z) \to s_{\scdist}(z)$ for all $z \in \C_+$.
To do so, we will first express $s_{\E \mu_{W_n}}(z)$ in terms of
the \emph{resolvent} $(W_n-zI)^{-1}$.
By the spectral theorem, for any Hermitian matrix $A$ the matrix $A - zI$
is invertible for any $z \in \C_+$.
Let $S_A(z) := (A - zI)^{-1}$ for any Hermitian $A$ and $z \in \C_+$.
Using the spectral theorem, we can also see that
\begin{displaymath}
	s_{\mu_A}(z) = \frac{1}{n} \tr S_A(z).
\end{displaymath}
Thus, we have
\begin{displaymath}
	s_{\E \mu_{W_n}}(z) = \E s_{\mu_{W_n}}(z) = \frac{1}{n} \E \tr S_{W_n}(z),
\end{displaymath}
and so it is enough to show that
\begin{displaymath}
	\frac{1}{n}\E\tr S_{W_n}(z) \to s_{\scdist}(z)
\end{displaymath}
for all $z \in \C_+$.

To understand the limiting behavior of $\frac{1}{n}\E\tr S_{W_n}(z)$,
we relate it with the $(n-1) \times (n-1)$ minors of $W_n$
using the Schur complement formula, which will be presented below.
For each $i \in \{1,\ldots,n\}$, let $W^{(i)}$ be the $(n-1) \times (n-1)$
matrix obtained by removing the $i$-th row and column from $W_n$.
Also, let $w_i$ denote the $i$-th column of $W_n$ with $w_{ii}$ removed.
(So, $w_i$ is an $(n-1)$-dimensional column vector.)
Let us denote the $(i,j)$-entry of a matrix $A$ by $A(i,j)$.
Recall that if $A$ is an invertible matrix, then
\begin{displaymath}
	A^{-1}(i,j) = \frac{1}{\det A} C_{ji}(A)
\end{displaymath}
where $C_{ji}(A)$ is the $(i,j)$-cofactor of $A$.
So we have
\begin{displaymath}
	S_{W_n}(z)(i,i) = \frac{\det (W^{(i)} - zI_{n-1})}{\det (W_n - zI)}
\end{displaymath}
where $I_{n-1}$ is the $(n-1) \times (n-1)$ identity matrix.

\begin{proposition}[Schur complement formula]
	\label{prop:schur_complement}
	Consider a matrix
	\begin{displaymath}
		\begin{pmatrix} A & B \\ C & D\end{pmatrix}
	\end{displaymath}
	with complex entries, where $A$ and $D$ are square matrices
	and $A$ is invertible.
	Then we have
	\begin{displaymath}
		\det \begin{pmatrix} A & B \\ C & D\end{pmatrix}
		= \det(A)\det(-CA^{-1}B+D).
	\end{displaymath}
\end{proposition}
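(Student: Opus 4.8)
The plan is to prove the identity by an explicit block factorization that is available precisely because $A$ is invertible, and then let multiplicativity of the determinant do the work. Writing the matrix in blocks with $A$ of size $p \times p$ and $D$ of size $q \times q$, I would first exhibit the factorization
\begin{equation*}
	\begin{pmatrix} A & B \\ C & D \end{pmatrix}
	= \begin{pmatrix} A & 0 \\ C & I \end{pmatrix}
	\begin{pmatrix} I & A^{-1}B \\ 0 & -CA^{-1}B + D \end{pmatrix}
\end{equation*}
(with identity blocks of sizes $p$ and $q$ as appropriate) and verify it by carrying out the block multiplication block by block: the $(1,1)$ block is $A$, the $(1,2)$ block is $AA^{-1}B = B$, the $(2,1)$ block is $C$, and the $(2,2)$ block is $CA^{-1}B + (-CA^{-1}B + D) = D$. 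This is a one-line check. Taking determinants then reduces the proposition to the two claims $\det\begin{pmatrix} A & 0 \\ C & I\end{pmatrix} = \det A$ and $\det\begin{pmatrix} I & A^{-1}B \\ 0 & E\end{pmatrix} = \det E$, where $E := -CA^{-1}B + D$.

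These are the block-triangular determinant formula in the special case where one diagonal block is an identity matrix. I would prove them by induction on the size of the identity block, using Laplace (cofactor) expansion along the row or column that meets that block: such a row (or column) has a single nonzero entry, equal to $1$ and lying on the main diagonal, so the expansion collapses to a single term whose sign is $(-1)^{i+i} = +1$ and whose minor has the same block-triangular shape with the identity block reduced by one. After finitely many steps only $\det A$ (respectively $\det E$) remains. Combining this with the factorization above gives $\det\begin{pmatrix} A & B \\ C & D\end{pmatrix} = \det(A)\det(-CA^{-1}B + D)$, as desired.

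I do not anticipate a real obstacle; the content is elementary linear algebra. The only points needing a little care are (i) getting the factorization right so that the lower-right block comes out exactly as $-CA^{-1}B + D$, and (ii) tracking the sign in each Laplace expansion — which is automatic here since the pivot entry always sits on the main diagonal, so every step contributes a clean factor of $+1$. If one prefers to avoid the induction, an alternative is the single row-reduction identity $\begin{pmatrix} I & 0 \\ -CA^{-1} & I\end{pmatrix}\begin{pmatrix} A & B \\ C & D\end{pmatrix} = \begin{pmatrix} A & B \\ 0 & E\end{pmatrix}$, whose left factor is an ordinary lower-triangular matrix with $1$'s on the diagonal and hence has determinant $1$; this reduces the problem directly to evaluating the determinant of a block upper-triangular matrix. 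Either route closes the argument.
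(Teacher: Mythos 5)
Your proof is correct, and the block-multiplication check of your proposed factorization does verify. Your primary route differs slightly from the paper's: you factor the given matrix as a product
\begin{displaymath}
	\begin{pmatrix} A & B \\ C & D \end{pmatrix}
	= \begin{pmatrix} A & 0 \\ C & I \end{pmatrix}
	\begin{pmatrix} I & A^{-1}B \\ 0 & -CA^{-1}B + D \end{pmatrix}
\end{displaymath}
of a block lower-triangular and a block upper-triangular matrix, then evaluate each factor's determinant, whereas the paper left-multiplies by the block elementary matrix $\begin{pmatrix} I & 0 \\ -CA^{-1} & I \end{pmatrix}$ (determinant $1$) to produce a single block upper-triangular matrix $\begin{pmatrix} A & B \\ 0 & -CA^{-1}B+D \end{pmatrix}$. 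You actually mention the paper's route yourself as your closing alternative. One genuine advantage of your write-up is that you do not merely assert the block-triangular determinant formula: you reduce it to the case where one diagonal block is the identity and then supply an inductive Laplace-expansion argument, whereas the paper states the determinants of its two block-triangular factors without proof. The paper's route is shorter to state because it needs only one block-triangular evaluation rather than two, but both reduce to the same elementary fact, and your extra care in justifying it is a small improvement in rigor rather than a detour.
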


\begin{proof}
	Note that
	\begin{displaymath}
		\begin{pmatrix} I & 0 \\ -CA^{-1} & I\end{pmatrix}
		\begin{pmatrix} A & B \\ C & D\end{pmatrix}
		= \begin{pmatrix} A & B \\ 0 & -CA^{-1}B +D\end{pmatrix}.
	\end{displaymath}
	Since
	\begin{displaymath}
		\det \begin{pmatrix} I & 0 \\ -CA^{-1} & I\end{pmatrix} = 1
	\end{displaymath}
	and
	\begin{displaymath}
		\det \begin{pmatrix} A & B \\ 0 & -CA^{-1}B +D\end{pmatrix} = \det(A)\det(-CA^{-1}B+D),
	\end{displaymath}
	we have
	\begin{displaymath}
		\det \begin{pmatrix} A & B \\ C & D\end{pmatrix} = \det(A)\det(-CA^{-1}B+D)
	\end{displaymath}
	by the multiplicativity of determinant.
\end{proof}

By Proposition \ref{prop:schur_complement}, we have
\begin{displaymath}
	\det (W_n - zI) = (-z-w_i^\ast S_{W^{(i)}}(z)w_i) \det(W^{(i)} - zI_{n-1}),
\end{displaymath}
and so
\begin{displaymath}
	S_{W_n}(z)(i,i) = \frac{-1}{z+w_i^\ast S_{W^{(i)}}(z)w_i}.
\end{displaymath}
Summing over $i=1,\ldots,n$ and taking the expectation, we obtain
\begin{equation} \label{eq:stieltjes_reduction_1}
	\frac{1}{n} \E \tr S_{W_n}(z) = \frac{1}{n} \sum_{i=1}^n
	\E \frac{-1}{z+w_i^\ast S_{W^{(i)}}(z)w_i}.
\end{equation}
(The fact that the expectation on the right side is well-defined follows
from Lemma \ref{lem:resolvent_im_pos} below.)
We will show that the right side of \eqref{eq:stieltjes_reduction_1}
gets close to
\begin{displaymath}
	\frac{-1}{z + \frac{1}{n}\E \tr S_{W_n}(z)}
\end{displaymath}
as $n$ grows, and obtain a recursive relation involving the limit of
$\frac{1}{n}\E \tr S_{W_n}(z)$.

\subsection{Derivation of a recurrence relation}
The following fact will be used repeatedly.
In particular, it will guarantee that many denominators we face
in the computation below are nonzero.
\begin{lemma} \label{lem:resolvent_im_pos}
	If $A$ is an $n \times n$ Hermitian matrix and $z \in \C_+$,
	then the following hold:
	\begin{enumerate}
		\item $\tr S_A(z) \in \C_+$;
		\item $\tr \bigl((A-zI)(A-\bar{z}I)\bigr)^{-1} \le n/(\Im z)^2$;
		\item $u^\ast S_A(z) u \in \C_+$ for any $u \in \C^n$.
	\end{enumerate}
\end{lemma}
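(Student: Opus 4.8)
The plan is to reduce all three claims to scalar computations with the eigenvalues of $A$, using the spectral theorem (Theorem \ref{thm:fin_spec_thm}, in the form of Corollary \ref{cor:unit_diag}). Write $A = UDU^\ast$ with $U$ unitary and $D = \operatorname{diag}(\lambda_1,\ldots,\lambda_n)$, $\lambda_i \in \R$. For $z \in \C_+$ each eigenvalue $\lambda_i - z$ of $A - zI$ is nonzero (it has negative imaginary part $-\Im z$), so $A - zI$ is invertible and $S_A(z) = U(D-zI)^{-1}U^\ast$ with $(D-zI)^{-1} = \operatorname{diag}\bigl(1/(\lambda_i - z)\bigr)$. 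The only elementary facts I would use repeatedly are that, for $\lambda \in \R$ and $z \in \C_+$,
\[ \operatorname{Im}\frac{1}{\lambda - z} = \frac{\Im z}{|\lambda - z|^2} > 0 \qquad\text{and}\qquad |\lambda - z|^2 = (\lambda - \operatorname{Re} z)^2 + (\Im z)^2 \ge (\Im z)^2 . \]

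For (i): since the trace is similarity-invariant, $\tr S_A(z) = \sum_{i=1}^n 1/(\lambda_i - z)$, whose imaginary part is $\sum_{i=1}^n \Im z/|\lambda_i - z|^2 > 0$; hence $\tr S_A(z) \in \C_+$. For (ii): by the spectral theorem $(A-zI)(A-\bar z I) = U\operatorname{diag}\bigl((\lambda_i - z)(\lambda_i - \bar z)\bigr)U^\ast = U\operatorname{diag}\bigl(|\lambda_i - z|^2\bigr)U^\ast$, which is positive definite, so its inverse exists and has trace $\sum_{i=1}^n 1/|\lambda_i - z|^2 \le n/(\Im z)^2$ by the second displayed inequality.

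For (iii): setting $v := U^\ast u$, we get $u^\ast S_A(z) u = v^\ast (D - zI)^{-1} v = \sum_{i=1}^n |v_i|^2/(\lambda_i - z)$, whose imaginary part equals $\Im z\sum_{i=1}^n |v_i|^2/|\lambda_i - z|^2$. For $u \ne 0$ we have $v \ne 0$ (as $U$ is unitary), so this quantity is strictly positive and $u^\ast S_A(z) u \in \C_+$ (the statement being read for nonzero $u$, or vacuous when $u = 0$). I do not expect any real obstacle here: the whole lemma is a one-line spectral computation combined with $|\lambda - z| \ge \Im z$. The only points requiring a moment's care are the harmless $u = 0$ edge case in (iii) and the observation that it is the strict inequality $\Im z > 0$ that places these quantities in the \emph{open} upper half-plane rather than merely its closure.
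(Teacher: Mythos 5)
Your proof is correct and takes essentially the same route as the paper: diagonalize $A$ by the spectral theorem and reduce all three claims to the scalar facts $\operatorname{Im}\bigl(1/(\lambda-z)\bigr) = \Im z/|\lambda-z|^2$ and $|\lambda-z|^2 \ge (\Im z)^2$. Your remark about the $u=0$ edge case in (iii) is a small but valid observation that the paper itself elides.
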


\begin{proof}
	Let $A = U D U^\ast$ where $U$ is unitary and $D$ is
	real diagonal with diagonal entries $d_1,\ldots,d_n$.
	(i) Since trace is similarity invariant, we have
	\begin{displaymath}
		\tr S_A(z) = \tr U(D-zI)^{-1}U^\ast = \tr (D-zI)^{-1}
		= \sum_{i=1}^n \frac{1}{d_i-z} \in \C_+.
	\end{displaymath}
	(ii) Also,
	\begin{align*}
		\tr\bigl((A-zI)(A-\bar{z}I)\bigr)^{-1}
		&= \tr \bigl((D-zI)^{-1}(D-\bar{z}I)^{-1}\bigr) \\
		&= \sum_{i=1}^n \frac{1}{(d_i-z)(d_i-\bar{z})}
		= \sum_{i=1}^n \frac{1}{|d_i-z|^2} \le \frac{n}{(\Im z)^2}.
	\end{align*}
	(iii) Finally, if we let $U^\ast u = (u_1,\ldots,u_n)$, then
	\begin{displaymath}
		u^\ast S_A(z) u = (U^\ast u)^\ast (D-zI)^{-1} (U^\ast u)
		= \sum_{i=1}^n \frac{|u_i|^2}{d_i-z} \in \C_+.
	\end{displaymath}
\end{proof}

In the rest of this subsection, we fix $z \in \C_+$, and transform
\begin{displaymath}
	\frac{1}{n} \sum_{i=1}^n \E \frac{-1}{z + w_i^\ast S_{W^{(i)}}(z) w_i}
\end{displaymath}
step-by-step to obtain
\begin{displaymath}
	\frac{-1}{z + \frac{1}{n} \E \tr S_{W_n}(z)}
\end{displaymath}
(asymptotically) in the end.

\subsubsection{From $w_i^\ast S_{W^{(i)}}(z)w_i$ to $\frac{1}{n}\tr S_{W^{(i)}}(z)$}
Instead of numbering the rows and columns of $W^{(i)}$ using $1,\ldots,n-1$,
let us use $1,\ldots,\hat{i},\ldots,n$ as if $W^{(i)}$ still lies in $W_n$.
For $j,k \ne i$, let $b_{jk}$ denote the $(j,k)$-entry of $S_{W^{(i)}}(z)$.
Since
\begin{align*}
	\sum_{j,k \ne i}  |b_{jk}|^2
	&= \tr\bigl((S_{W^{(i)}}(z))^\ast S_{W^{(i)}}(z)\bigr) \\
	&= \tr\bigl([(W^{(i)}-zI_{n-1})^\ast]^{-1} (W^{(i)}-zI_{n-1})^{-1}\bigr) \\
	&= \tr \bigl((W^{(i)}-zI_{n-1})(W^{(i)}-\bar{z}I_{n-1})\bigr)^{-1},
\end{align*}
we have
\begin{equation}
	\label{eq:stieltjes_resol_sq_bdd}
	\sum_{j,k \ne i} \E |b_{jk}|^2 \le \frac{n}{(\Im z)^2}
\end{equation}
by Lemma \ref{lem:resolvent_im_pos} (ii).
The fact that $b_{jk}$'s are in $L^2$ will guarantee that all terms
in the computation below are well-defined and finite.
Using the fact that $W^{(i)}$ and $w_i$ are independent,
and each entry of $W_n$ is of mean zero and variance $1/n$, we have
\[\begin{split}
	&\E\Bigl[\Bigl|w_i^\ast S_{W^{(i)}}(z)w_i
	- \frac{1}{n} \tr S_{W^{(i)}}(z)\Bigr|^2\Bigr] \\
	&= \sum_{\substack{j,k \ne i \\ j \ne k}}
	\Bigl[\E\bigl[|w_{ji}|^2\bigr]
	\E\bigl[|w_{ki}|^2\bigr]\E\bigl[|b_{jk}|^2\bigr]
	+ \E \bigl[\overline{w_{ji}}^2\bigr] \E \bigl[w_{ki}^2\bigr]
	\E\bigl[b_{jk}\overline{b_{kj}}\bigr] \\
	&\qquad + \E\bigl[|w_{ji}|^2\bigr]\E\bigr[|w_{ki}|^2\bigr]
	\E\bigl[b_{jj}\overline{b_{kk}}\bigr]\Bigr]
	+ \sum_{j \ne i} \E\bigl[|w_{ji}|^4\bigr]\E\bigl[|b_{jj}|^2\bigr] \\
	&\qquad - \sum_{j,k \ne i} \Bigl[ \frac{1}{n}
	\E\bigl[|w_{ji}|^2\bigr]\E\bigl[b_{jj} \overline{b_{kk}}\bigr]
	+ \frac{1}{n}\E\bigl[|w_{ki}|^2\bigr]
	\E\bigl[b_{jj}\overline{b_{kk}}\bigr] \Bigr] \\
	&\qquad + \frac{1}{n^2}\sum_{j,k \ne i}
	\E\bigl[b_{jj}\overline{b_{kk}}\bigr] \\
	&= \frac{1}{n^2} \sum_{\substack{j,k \ne i \\ j \ne k}}
	\E\bigl[|b_{jk}|^2\bigr] + \sum_{j \ne i} \E\bigl[|b_{jj}|^2\bigr]
	\Bigl(\E\bigl[|w_{ji}|^4\bigr] - \frac{1}{n^2}\Bigr) \\
	&\qquad + \sum_{\substack{j,k \ne i \\ j \ne k}}
	\E \bigl[\overline{w_{ji}}^2\bigr] \E \bigl[w_{ki}^2\bigr]
	\E\bigl[b_{jk}\overline{b_{kj}}\bigr].
\end{split}\]
Note that the last term in the last line must be real.
Since
\begin{align*}
	\Bigl|\sum_{\substack{j,k \ne i \\ j \ne k}}
	\E &\bigl[\overline{w_{ji}}^2\bigr]
	\E \bigl[w_{ki}^2\bigr] \E\bigl[b_{jk}\overline{b_{kj}}\bigr] \Bigr|\\
	&\le \frac{1}{n^2} \sum_{j,k \ne i} \E\bigl|b_{jk}\overline{b_{kj}}\bigr|
	\le \frac{1}{n^2} \sum_{j,k \ne i}
	\bigl(\E\bigl[|b_{jk}|^2\bigr]\bigr)^{1/2}
	\bigl(\E\bigl[|b_{kj}|^2\bigr]\bigr)^{1/2} \\
	&\le \frac{1}{n^2} \Bigl( \sum_{j,k \ne i}
	\E \bigl[|b_{jk}|^2\bigr] \Bigr)^{1/2}
	\Bigl( \sum_{j,k \ne i} \E \bigl[|b_{jk}|^2\bigr] \Bigr)^{1/2} \\
	&= \frac{1}{n^2} \sum_{j,k \ne i} \E \bigl[|b_{jk}|^2\bigr]
\end{align*}
by the Cauchy-Schwarz inequality, we have
\[\begin{split}
	\E\Bigl[\Bigl|w_i^\ast S_{W^{(i)}}(z)w_i
	- \frac{1}{n}&\tr S_{W^{(i)}}(z)\Bigr|^2\Bigr] \\
	&\le \frac{2}{n^2} \sum_{j,k \ne i} \E \bigl[|b_{jk}|^2\bigr]
	+ \frac{\eta_n^2}{n} \sum_{j \ne i} \E\bigl[|b_{jj}|^2\bigr] \\
	&\le \left(\frac{2}{n}+\eta_n^2\right) \frac{1}{n}\sum_{j,k \ne i}
	\E \bigl[|b_{jk}|^2\bigr] \\
	&\le \left(\frac{2}{n}+\eta_n^2\right) \frac{1}{(\Im z)^2} \\
\end{split}\]
by \eqref{eq:stieltjes_resol_sq_bdd}.
It follows that
\begin{multline*}
	\frac{1}{n}\sum_{i=1}^n \E\Bigl[\Bigl|w_i^\ast S_{W^{(i)}}(z)w_i
	- \frac{1}{n} \tr S_{W^{(i)}}(z)\Bigr|\Bigr] \\
	\le \frac{1}{n}\sum_{i=1}^n \Bigl(\E\Bigl[\Bigl|w_i^\ast S_{W^{(i)}}(z)w_i
	- \frac{1}{n} \tr S_{W^{(i)}}(z)\Bigr|^2\Bigr]\Bigr)^{1/2} \\
	\le \Bigl( \Bigl(\frac{2}{n} + \eta_n^2\Bigr)
	\frac{1}{(\Im z)^2} \Bigr)^{1/2} \to 0 \qquad \text{as $n \to \infty$.}
\end{multline*}
(We are fixing $z \in \C_+$.)
Therefore, by \ref{lem:resolvent_im_pos} (i) and (iii), we have
\[\begin{split}
	\biggl|\frac{1}{n} \sum_{i=1}^n &\E \frac{-1}{z+w_i^\ast S_{W^{(i)}}(z)w_i}
	- \frac{1}{n} \sum_{i=1}^n \E \frac{-1}{z+\frac{1}{n}
	\tr S_{W^{(i)}}(z)} \biggr| \\
	&\le \frac{1}{n} \sum_{i=1}^{n} \E \biggl|\frac{w_i^\ast S_{W^{(i)}}w_i
	- \frac{1}{n}\tr S_{W^{(i)}}}{(z+w_i^\ast S_{W^{(i)}}(z)w_i)
	(z+\frac{1}{n}\tr S_{W^{(i)}}(z))} \biggr| \\
	&\le \frac{1}{n(\Im z)^2} \sum_{i=1}^n \E \Bigl|w_i^\ast S_{W^{(i)}}(z)w_i
	- \frac{1}{n}\tr S_{W^{(i)}}(z)\Bigr| \to 0
\end{split}\]
as $n \to \infty$.

\subsubsection{From $\frac{1}{n}\tr S_{W^{(i)}}(z)$ to $\frac{1}{n} \E \tr S_{W^{(i)}}(z)$}
Since the maps $x \in \R \mapsto \Re(1/(x-z))$ and $x \in \R \mapsto \Im(1/(x-z))$ have bounded variations, Theorem \ref{thm:concen_spec} implies that there are $c, C > 0$ (depending on $z$, but we are fixing $z \in \C_+$) such that
\begin{multline*}
	\p\left( \left|\frac{1}{n}\tr S_X(z) - \frac{1}{n} \E \tr S_X(z) \right|
	\ge t \right) \\
	= \p\left( \left| \int_\R \frac{1}{x-z} \,\mu_{X}(dx)
	- \E \int_\R \frac{1}{x-z} \,\mu_{X}(dx) \right| \ge t \right)
	\le c \exp(-Cnt^2)
\end{multline*}
for any $n \in \N$, $t > 0$, and a random $n \times n$ Hermitian matrix $X$.
So, we have
\[\begin{split}
	\E \bigg|\frac{1}{n}&\tr S_{W^{(i)}}(z)
	- \frac{1}{n} \E \tr S_{W^{(i)}}(z)\bigg| \\
	&= \int_0^\infty \p\left( \left|\frac{1}{n-1}\tr S_{W^{(i)}}(z)
	- \frac{1}{n-1} \E \tr S_{W^{(i)}}(z)\right|
	\ge \frac{n}{n-1}t \right) \,dt \\
	&\le \int_0^\infty c\exp\left(-\frac{Cn^2t^2}{(n-1)}\right) \,dt
	\le \int_0^\infty c \exp(-Cnt^2)\,dt
\end{split}\]
for any $n \in \N$ and $i = 1,\ldots,n$.
It follows that
\begin{multline*}
	\frac{1}{n}\sum_{i=1}^n \E\left|\frac{1}{n}\tr S_{W^{(i)}}(z)
	- \frac{1}{n}\E\tr S_{W^{(i)}}(z)\right| \\
	\le \int_0^\infty c \exp(-Cnt^2)\,dt \to 0 \qquad \text{as $n \to \infty$}
\end{multline*}
by dominated convergence.
Therefore,
\[\begin{split}
	\bigg|\frac{1}{n} \sum_{i=1}^n
	&\E \frac{-1}{z+\frac{1}{n}\tr S_{W^{(i)}}(z)}
	- \frac{1}{n} \sum_{i=1}^n \frac{-1}{z + \frac{1}{n}\E\tr S_{W^{(i)}}(z)}
	\bigg| \\
	&\le \frac{1}{n} \sum_{i=1}^n \E \left| \frac{\frac{1}{n}
	\tr S_{W^{(i)}}(z)- \frac{1}{n}\E\tr S_{W^{(i)}}(z)}{(z+\frac{1}{n}
	\tr S_{W^{(i)}}(z)) (z + \frac{1}{n}\E\tr S_{W^{(i)}}(z))} \right| \\
	&\le \frac{1}{n (\Im z)^2} \sum_{i=1}^n \E\left|\frac{1}{n}
	\tr S_{W^{(i)}}(z)- \frac{1}{n}\E\tr S_{W^{(i)}}(z)\right|
	\to 0
\end{split}\]
as $n \to \infty$.

\subsubsection{From $\frac{1}{n} \E \tr S_{W^{(i)}}(z)$ to
$\frac{1}{n} \E \tr S_{W_n}(z)$}
Let $\overline{W}^{(i)}$ be the $n \times n$ matrix obtained by
replacing all the entries in the $i$-th row and column of $W$ by $0$.
Since $\overline{W}^{(i)}$ has the same (multi)set of eigenvalues as $W^{(i)}$
except that it has one more zero eigenvalue, we have
\begin{displaymath}
	\left|\frac{1}{n} \tr S_{W^{(i)}}(z) - \frac{1}{n}
	\tr S_{\overline{W}^{(i)}}(z) \right| \le \frac{1}{n \Im z}.
\end{displaymath}
By the Hoffman-Wielandt inequality (Theorem \ref{thm:hof-wie_ineq}), we have
\[\begin{split}
	\bigg|\frac{1}{n} \tr S_{\overline{W}^{(i)}}(z)
	- \frac{1}{n} \tr S_{W_n}&(z) \bigg|
	\le \frac{1}{n} \sum_{j=1}^n \biggl|\frac{\lambda_j(\overline{W}^{(i)})
	-\lambda_j(W_n)}{\bigl(\lambda_j(\overline{W}^{(i)}) - z\bigr)
	\bigl(\lambda_j(W_n)-z\bigr)}\biggr| \\
	&\le \frac{1}{n(\Im z)^2} \sum_{j=1}^n
	\bigl|\lambda_j(\overline{W}^{(i)})-\lambda_j(W_n)\bigr| \\
	&\le \frac{1}{(\Im z)^2} \biggl( \frac{1}{n} \sum_{j=1}^n
	|\lambda_j(\overline{W}^{(i)})-\lambda_j(W_n)|^2 \biggr)^{1/2} \\
	&\le \frac{1}{(\Im z)^2} \biggl( \frac{2}{n} \sum_{j=1}^n |w_{ij}|^2
	\biggr)^{1/2}
	\le \frac{\sqrt{2}}{(\Im z)^2} \eta_n.
\end{split}\]
Combining the results of the previous two displays, we obtain
\begin{displaymath}
	\E \left| \frac{1}{n} \tr S_{W^{(i)}}(z) - \frac{1}{n} \tr S_{W_n}(z)
	\right| \le \frac{1}{n\Im z} + \frac{\sqrt{2}}{(\Im z)^2}\eta_n
\end{displaymath}
for all $n \in \N$ and $i = 1,\ldots,n$, and so
\begin{displaymath}
	\frac{1}{n} \sum_{i=1}^n \E \left| \frac{1}{n} \tr S_{W^{(i)}}(z)
	- \frac{1}{n} \tr S_{W_n}(z) \right| \le \frac{1}{n\Im z}
	+ \frac{\sqrt{2}}{(\Im z)^2}\eta_n \to 0
\end{displaymath}
as $n \to \infty$.
Therefore,
\[\begin{split}
	\biggl|\frac{1}{n} \sum_{i=1}^n & \frac{-1}{z+\frac{1}{n}
	\E\tr S_{W^{(i)}}(z)} - \frac{-1}{z + \frac{1}{n}\E\tr S_{W_n}(z)} \biggr|\\
	&= \biggl|\frac{1}{n} \sum_{i=1}^n \frac{-1}{z+\frac{1}{n}
	\E\tr S_{W^{(i)}}(z)} - \frac{1}{n} \sum_{i=1}^n \frac{-1}{z
	+ \frac{1}{n}\E\tr S_{W_n}(z)} \biggr| \\
	&\le \frac{1}{n} \sum_{i=1}^n \biggl| \frac{\frac{1}{n}
	\E\tr S_{W^{(i)}}(z)- \frac{1}{n}\E\tr S_{W_n}(z)}{(z+\frac{1}{n}
	\E\tr S_{W^{(i)}}(z)) (z + \frac{1}{n}\E\tr S_{W_n}(z))} \biggr| \\
	&\le \frac{1}{n (\Im z)^2} \sum_{i=1}^n \E\Bigl|\frac{1}{n}
	\tr S_{W^{(i)}}(z)- \frac{1}{n}\tr S_{W_n}(z)\Bigr|
	\to 0
\end{split}\]
as $n \to \infty$.

\subsubsection{The result}
Combining \eqref{eq:stieltjes_reduction_1} and the final results of the
previous three subsubsections, we obtain
\begin{equation} \label{eq:stieltjes_recurrence}
	\left| \frac{1}{n} \E \tr S_{W_n}(z) - \frac{-1}{z + \frac{1}{n}
	\E\tr S_{W_n}(z)} \right| \to 0 \qquad \text{as $n \to \infty$}
\end{equation}
for any $z \in \C_+$.

\subsection{Convergence of the Stieltjes transform}
Let $s_n(z) := \frac{1}{n} \E \tr S_{W_n}(z)$.
Fix $z \in \C_+$ for now, and let us write $s_n = s_n(z)$.
If there are $n_1 < n_2 < \cdots$ such that $|s_{n_k}| \to \infty$
as $k \to \infty$, then we would have
\begin{displaymath}
	\left|s_{n_k} - \frac{-1}{z+s_{n_k}} \right| \to \infty
	\qquad \text{as $k \to \infty$,}
\end{displaymath}
which contradicts \eqref{eq:stieltjes_recurrence}.
Thus $\{s_n \mid n \in \N\}$ is bounded, and therefore any subsequence of
$(s_n)_{n \in \N}$ has a convergent subsequence.
If we show that any convergent subsequence of $(s_n)_{n \in \N}$
should converge to a number independent of the subsequence we choose,
then we will have the convergence of $(s_n)_{n \in \N}$ to that number.

Assume $s_{n_k} \to s \in \C_+ \cup \R$ as $k \to \infty$.
Since the left side of \eqref{eq:stieltjes_recurrence} converges to
$\bigl|s+1/(z+s)\bigr|$ along $n_1 < n_2 < \cdots$, we have
\begin{displaymath}
	s + \frac{1}{z + s} = 0.
\end{displaymath}
Solving the quadratic equation, we obtain
\begin{displaymath}
	s = \frac{-z \pm \sqrt{z^2 - 4}}{2}.
\end{displaymath}

We need to decide which branch of $\sqrt{z^2 - 4}$ we use.
For simplicity, we will define $\sqrt{z^2-4}$ only for $z \in \C_+ \cup \R$,
and it will suffice.
Choose the branch of $\sqrt{z-2}$ and $\sqrt{z+2}$ defined on
$\C_+ \cup \R$ which are continuous and have nonnegative imaginary part
for all $z \in \C_+ \cup \R$.
Then let $\sqrt{z^2-4} := \sqrt{z-2} \sqrt{z+2}$.
This will make $\sqrt{z^2-4}$ continuous and have nonnegative imaginary part
on $\C_+ \cup \R$.

Since $s_{n_k} = \frac{1}{n_k} \E \tr S_{W_{n_k}}(z) \in \C_+$ for all
$k \in \N$, we have $\Im s \ge 0$.
On the other hand, $(-z-\sqrt{z^2-4})/2$ has a negative imaginary part.
Thus we have
\begin{displaymath}
	s = \frac{-z + \sqrt{z^2 - 4}}{2}.
\end{displaymath}
Since this is true for any subsequence $(s_{n_k})_{k \in \N}$ of
$(s_n)_{n \in \N}$ converging to $s$,
we have
\begin{equation}
	\label{eqn:stie_to_sc}
	s_{\E \mu_{W_n}}(z) = \frac{1}{n} \E \tr S_{W_n}(z) = s_n \to
	\frac{-z+\sqrt{z^2-4}}{2} \qquad \text{as $n \to \infty$.}
\end{equation}

\subsection{Computation of the limiting distribution}
\begin{lemma} \label{lem:sc_stie}
	If $\mu$ is a positive, finite Borel measure on $\R$ satisfying
	$s_\mu(z) = (-z + \sqrt{z^2 - 4})/2$ for all $z \in \C_+$,
	then $\mu(dx) = \sqrt{(4-x^2)_+}\,dx$.
\end{lemma}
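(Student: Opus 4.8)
The plan is to use the Stieltjes inversion formula as a uniqueness statement and then match $s_\mu$ against a transform we can compute directly. By part (ii) of Theorem~\ref{thm:stie_inv}, any positive finite Borel measure on $\R$ is the weak limit of $\frac{1}{\pi}\Im s_\mu(a+ib)\,da$ as $b\downarrow0$, so two such measures with the same Stieltjes transform coincide. Hence it is enough to exhibit one positive finite Borel measure $\nu$ with $s_\nu(z)=\frac{-z+\sqrt{z^2-4}}{2}$ for all $z\in\C_+$, the branch being the one fixed just above the lemma; then $\mu=\nu$. The candidate is of course the semicircle law itself, $\nu(dx)=\frac{1}{2\pi}\sqrt{(4-x^2)_+}\,dx$, which is finite with $\nu(\R)=1$.

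So the task reduces to computing $s_\nu$. First I would substitute $x=2\cos\theta$ and use evenness of the resulting integrand to write
\[ s_\nu(z)=\frac{1}{2\pi}\int_{-2}^{2}\frac{\sqrt{4-x^2}}{x-z}\,dx=\frac{1}{\pi}\int_{0}^{2\pi}\frac{\sin^2\theta}{2\cos\theta-z}\,d\theta. \]
Then the substitution $w=e^{i\theta}$, together with $2\cos\theta-z=(w^2-zw+1)/w$ and $\sin^2\theta=-(w^2-1)^2/(4w^2)$, turns this into the contour integral
\[ s_\nu(z)=\frac{1}{4\pi i}\oint_{|w|=1}\frac{-(w^2-1)^2}{w^2\,(w^2-zw+1)}\,dw. \]
The integrand has a double pole at $w=0$ and simple poles at the two roots $w_\pm=\frac{z\pm\sqrt{z^2-4}}{2}$ of $w^2-zw+1$. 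For $z\in\C_+$ neither root can lie on the unit circle (that would force $z=w+w^{-1}=2\cos\theta\in\R$), and $w_+w_-=1$, so exactly one root, say $w_-$ with $|w_-|<1$, is enclosed. A routine residue computation gives residue $-z$ at $w=0$ and, using $w_-^2-1=w_-(w_--w_+)=-w_-\sqrt{z^2-4}$, residue $\sqrt{z^2-4}$ at $w_-$; so by the residue theorem $s_\nu(z)=\tfrac12\bigl(-z+\sqrt{z^2-4}\bigr)$, which is what we want, and therefore $\mu=\nu=\scdist$.

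The one delicate point — and the only real obstacle — is to make sure the $\sqrt{z^2-4}$ produced by the residue at $w_-$ is literally the branch $\sqrt{z-2}\sqrt{z+2}$ used in the hypothesis, not merely some square root of $z^2-4$. I would pin this down by noting that the inside root $z\mapsto w_-(z)$ is continuous (in fact holomorphic) on the connected set $\C_+$, because for $z\in\C_+$ the two roots are distinct ($z^2-4\neq0$) and stay off the unit circle; that $z\mapsto\frac{z-\sqrt{z^2-4}}{2}$ is continuous on $\C_+$ and is always one of the two roots; and that the two agree along $z=iy$, $y>0$, where one checks directly from the chosen branch that $\sqrt{(iy)^2-4}=i\sqrt{y^2+4}$ and hence $\bigl|\tfrac{iy-i\sqrt{y^2+4}}{2}\bigr|=\tfrac{\sqrt{y^2+4}-y}{2}<1$. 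By connectedness they agree throughout $\C_+$, so $w_+-w_-=\sqrt{z^2-4}$ with the intended branch, and the residue arithmetic goes through. Everything else is an elementary substitution and cofactor-style residue computation.
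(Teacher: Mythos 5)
Your proof is correct, but it takes a genuinely different route from the paper's. The paper never computes $s_{\scdist}$ at all: it reads off $\mu(\R)=1$ from part (iii) of Theorem \ref{thm:stie_inv}, computes the pointwise limit $\frac{1}{\pi}\Im s_\mu(a+ib)\to\frac{1}{2\pi}\sqrt{(4-a^2)_+}$ as $b\downarrow0$ using continuity of the chosen branch, upgrades pointwise convergence of densities to weak convergence via Scheff\'e's theorem, and then identifies $\mu$ through part (ii) of the inversion formula --- the text even remarks afterwards that this deliberately avoided the residue-theorem computation you carry out. You instead use part (ii) only as a uniqueness statement (two positive finite measures with the same transform have the same weak limit of $\frac{1}{\pi}\Im s(\cdot+ib)\,da$, hence coincide) and then verify by the substitution $w=e^{i\theta}$ and the residue theorem that the semicircle law has transform $\frac{-z+\sqrt{z^2-4}}{2}$; your constants check out (residue $-z$ at the double pole $w=0$, residue $w_+-w_-=\sqrt{z^2-4}$ at the enclosed root), and your branch bookkeeping --- showing via the test point $z=iy$ and connectedness of $\C_+$ that the enclosed root is exactly $\frac{z-\sqrt{z-2}\sqrt{z+2}}{2}$ for the paper's branch --- is precisely the delicate point and you handle it correctly. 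The trade-off: the paper's argument stays within real-variable tools (dominated convergence, Scheff\'e) and is shorter, while yours needs the residue theorem and the branch analysis but gives the explicit formula for $s_{\scdist}$ as a byproduct and makes the uniqueness mechanism of the inversion formula transparent.
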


\begin{proof}
	As
	\begin{displaymath}
		b \Im s_\mu(ib) = b\frac{-b+\sqrt{b^2+4}}{2}
		= 2\frac{-1+\sqrt{1+4/b^2}}{4/b^2}\to1 \qquad \text{as $b \to \infty$,}
	\end{displaymath}
	we have $\mu(\R) = 1$ by Theorem \ref{thm:stie_inv} (iii).
	Since $z \mapsto (-z+\sqrt{z^2-4})/2$ is continuous on
	$\C_+ \cup \R$, we have
	\begin{displaymath}
		\frac{1}{\pi}\Im s_\mu(a+ib) \to \Im \frac{-a+\sqrt{a^2-4}}{2\pi}
		= \frac{1}{2\pi} \sqrt{(4-a^2)_+} \qquad \text{as $b \downarrow 0$}
	\end{displaymath}
	for each fixed $a \in \R$.
	Since $\frac{1}{\pi} \Im s_\mu(a+ib)\,da$ is a probability density
	(by Theorem \ref{thm:stie_inv} (i)) converging pointwise to the
	probability density $\frac{1}{2\pi}\sqrt{(4-a^2)_+}$ as
	$b \downarrow 0$, we have $\frac{1}{\pi}\Im s_\mu(a+ib)\,da \tto
	\frac{1}{2\pi}\sqrt{(4-a^2)_+}\,da$ as $b \downarrow 0$ by Scheff\'e's
	theorem (\cite[Theorem 16.12]{Bil12}).
	Now $\mu(dx) = \frac{1}{2\pi}\sqrt{(4-x^2)_+}\,dx$ follows from
	Theorem \ref{thm:stie_inv} (ii).
\end{proof}

The proof of Lemma \ref{lem:sc_stie} shows how one can figure out what the
limiting spectral distribution should be in the first place.
Now we finish our proof of the semicircular law.
Since $\E\mu_{W_1}, \E\mu_{W_2}, \ldots$ are probability measures, there are
integers $n_1 < n_2 < \cdots$ such that $\E\mu_{W_{n_k}} \to \mu$ vaguely
as $k \to \infty$ for some positive, finite measure $\mu$.
Since $s_{\E\mu_{W_{n_k}}}(z) \to s_\mu(z)$ as $k \to \infty$
for all $z \in \C_+$, \eqref{eqn:stie_to_sc} implies
$s_\mu(z) = (-z+\sqrt{z^2-4})/2$ for all $z \in \C_+$.
So, we have $\mu = \scdist$ by Lemma \ref{lem:sc_stie}.
Now $\E \mu_{W_n} \tto \scdist$ follows from \eqref{eqn:stie_to_sc}
and Theorem \ref{thm:stie_conv}.
Interestingly, we were able to avoid an actual computation of $s_{\scdist}$,
in which we might have used something like the residue theorem or
the Cauchy integral formula.

\bibliographystyle{alpha}
\bibliography{wooyoung}

\end{document}